\theoremstyle{plain}
\newtheorem{thm}{Theorem}[section]
\newtheorem{lem}[thm]{Lemma}
\newtheorem{prop}[thm]{Proposition}
\theoremstyle{definition}
\newcommand{\R}{\mathbb{R}}
\newcommand{\N}{\mathbb{N}}
\newcommand{\E}{\mathbb{E}}
\renewcommand{\P}{\mathbb{P}}
\newcommand\floor[1]{\left\lfloor#1\right\rfloor}
\newcommand\curly[1]{\left\{#1\right\}}
\newcommand\brac[1]{\left[#1\right]}
\newcommand\paren[1]{\left(#1\right)}
\newcommand\abs[1]{\left|#1\right|}
\renewcommand{\hat}{\widehat}
\DeclareMathOperator{\var}{Var}
\DeclareMathOperator{\cov}{Cov}
\let\epsilon\varepsilon
\DeclareMathOperator{\li2}{Li_2}
\title{The Harmonic Entropy Estimator: Minimax Optimality and Semiparametric Efficiency for Infinite Alphabets}
\author{Octavio C\'esar Mesner \\
	\small Brown School, Washington University in St. Louis \\
	\small \texttt{mesner@wustl.edu}
}
\date{}
\begin{document}

\maketitle

\begin{abstract}
	This paper considers the estimation of Shannon entropy for discrete distributions with countably infinite support.
	While minimax rates for finite-support distributions are established, infinite-support distributions present distinct challenges regarding bias control as probabilities vanish.
	We address this by introducing the \textit{harmonic entropy estimator}, a statistic derived from an exact algebraic identity relating the expectation of harmonic-transformed binomial counts to the logarithm of underlying success probabilities.
	We establish two main results characterizing the statistical limits of this problem.
	First, for the class of distributions with at least quadratically decaying tails ($p_j\lesssim j^{-2}$), we prove that the estimator achieves the parametric $L_2$-minimax convergence rate of order $1/n$.
	Second, under the stronger condition $p_j =o(j^{-2})$, we demonstrate that the estimator is semiparametrically efficient, converging to a normal distribution with variance matching the asymptotic efficiency bound $\var[\log p(X)]$.
	These results unify entropy estimation theory for finite-variance distributions, and provide a simple, one-step estimator with sharp theoretical guarantees.
\end{abstract}

\vspace{1em} % Adds a little space
\noindent\textbf{Keywords:} Entropy Estimation, Minimax Optimality, Semiparametric Efficiency, Asymptotic Normality, Discrete Distributions, Infinite Support, Harmonic Numbers

\section{Introduction}

Estimating the Shannon entropy of a discrete distribution from a finite sample is a fundamental problem in nonparametric functional estimation.
The properties of entropy estimators are well-understood for discrete distributions with finite support $s$.
Miller \cite{miller1955entropyBias} provided initial bias corrections for the plug-in maximum likelihood estimator, and more recent advances have established the minimax rates of convergence in the challenging high-dimensional regime where large $s$ scales with $n$ \cite{jiao2015minimax, wu2016large}.
Extending minimax guarantees to distributions with countably infinite support, however, has proven to be challenging \cite{cohen2023dimension}.
The analysis is complicated by the potentially severe bias introduced by unobserved symbols and the difficulty of controlling the moments of transformed empirical frequencies, which can behave poorly as probabilities approach zero.
Consequently, determining the minimax optimal rate for unbounded random variables is an open problem in nonparametric functional estimation.

This paper focuses on addressing the problem for the class of distributions with at least quadratically decaying tails ($p_j\lesssim j^{-2}$).
We introduce an analytical approach resting on the harmonic number function that leads to an estimator with optimal theoretical guarantees.
This direction is inspired by the classic Kozachenko-Leonenko nearest-neighbor estimator for differential entropy \cite{kozachenko1987sample, gao2018demystifying}, whose analysis relies on an identity linking the expected log-probability in a neighborhood of a sample point to the digamma function.
The search for a discrete-distribution analogue prompted the investigation of the expectation of harmonic-transformed binomial counts.
Proposition \ref{prop:mathind} uncovers a direct link to the logarithm of the success probability, the target in entropy estimation.
This identity allows for an exact, non-asymptotic characterization of the estimator's bias, bypassing the need for approximation bounds that complicate prior analyses.
Moreover, it leads to a simple, one-step estimator.

Analogous second-order identities (Propositions \ref{prop:scndmnt}, \ref{prop:multiProd}) facilitate a precise analysis of the variance, in particular, the challenging covariance structure of transformed multinomial counts.
As the calculations are exact, lower-order remainder terms cancel, with the order of cancellation increasing with $n$. 
The resulting algebraic structure and its cancellation allow us to prove the estimator achieves the $\var[\log p(X)]$ efficiency bound—a task that has remained elusive for other methods on unbounded distributions.

This analytical approach allows us to establish two main results.
First, we demonstrate that the $L_2$-minimax rate of convergence for entropy estimation is $\asymp 1/n$ for the class of distributions with tails decaying at least as fast as $j^{-2}$ (Theorem~\ref{thm:minimax}).
This result extends existing results \cite{jiao2015minimax, wu2016large} to infinite alphabets, and unifies the theory for a class of distributions that includes finite-variance distributions and monotonic, finite-mean distributions.
Second, we prove that the harmonic estimator is semiparametrically efficient for distributions with tails decaying as $o(j^{-2})$ (Theorem~\ref{clt}).
Specifically, it converges weakly to a normal distribution centered at the true entropy whose variance is the theoretical lower bound.
Taken together, these results define the sharp statistical limits for this problem within the defined class and prove they are attained by the harmonic entropy estimator.

\textbf{Notation}: $a_j\lesssim b_j$ implies a universal constant $C$ such that $\sup_j a_j/b_j \leq C$.
$a_j \asymp b_j$ is equivalent to $a_j \lesssim b_j$ and $b_j \lesssim a_j$.
$X^{(i)}$ is the $i$th sample point and $\mathbf{X}_n =\left\{X^{(1)}, \cdots, X^{(n)}\right\}$ is the sample of $n$ points.
$m^{(i)}$ is the number of occurrences of sample point $X^{(i)}$.
If $X^{(i)}=j$, we may also write $m_j$ for the number of occurrences of $j$ in the sample, and $m_j = m^{(i)}$.
$J(m)$ is the $m$th harmonic number.

\subsection{Background and Prior Work} \label{sec:background}

Let $X$ be a discrete random variable with a probability mass function (PMF) $p$ over the natural numbers $\N = \{1, 2, 3, \dots\}$ such that $p_j =p(j)$.
Without loss of generality, we assume that $p$ is monotonically decreasing.
The Shannon entropy of $X$ is defined as $H(X) = -\E[\log p(X)]$.\footnote{$\log$ refers to the natural logarithm}
Estimating this quantity from observed data is a classical problem in nonparametric statistics, with broad applications ranging from the construction of decision trees \cite{quinlan1996tree}, graphical model learning \cite{chow1968tree}, and feature selection \cite{peng2005feature} to neuroscience \cite{dimitrov2011information}, genetics \cite{hausser2009entropy}, and algorithmic fairness \cite{martin2014fairness}.

A natural starting point for entropy estimation is the ``plug-in'' estimator, or the maximum likelihood estimator (MLE), $\hat{H}_{\text{plug}} = -\sum_j \hat{p}_j \log \hat{p}_j$, where $\hat{p}_j$ is the empirical frequency of symbol $j$.
In the simplest case of a known, finite support where the sample size is large, this estimator is well-behaved and asymptotically efficient \cite{basharin1959discEntropy}.
While universally consistent \cite{antos2001convergence}, its performance degrades severely outside of this ideal regime due to negative bias.
Although biased estimation is unavoidable for finite samples \cite{paninski2003estimation}, its bias can be characterized as $-\frac{s-1}{2n} + \mathcal{O}(n^{-2})$ \cite{miller1955entropyBias}.
While this suggests a simple correction, the approach fails when $s$ is unknown or infinite, which is often the case in practice.
In this more challenging setting, the bias is dominated by unobserved symbols in the sample, and the problem of entropy estimation becomes fundamentally linked to the ``missing mass'' problem \cite{good1953missingmass, orlitsky2016optimal}.

This core difficulty has motivated a rich body of research aimed at correcting the plug-in estimator's flaws.
One line of work addresses the large alphabet regime.
Recognizing that the logarithmic transformation is particularly problematic for small, non-zero probabilities, Jiao et al. \cite{jiao2015minimax} and Wu et al. \cite{wu2016large} developed sophisticated two-stage estimators.
They first classify symbols based on their empirical counts and then apply either a bias-corrected plug-in estimate for high-frequency symbols or a carefully chosen polynomial approximation for low-frequency symbols.
These methods achieve minimax optimality for finite-support distributions where $s$ may grow with $n$, establishing the benchmark for that class.

Another approach, taken by Valiant and Valiant \cite{valiant2013unseen}, tackles the problem by first estimating the full histogram of the distribution, including the probabilities of unobserved symbols, then estimating entropy.
Concurrently, Bayesian methods have offered an alternative paradigm.
By placing a prior over the space of distributions, such as a Dirichlet-mixture prior \cite{nemenman2001entropy} or a Pitman-Yor process mixture for infinite support \cite{archer2014bayesian}, these methods naturally average over the uncertainty of unobserved symbols, thereby mitigating bias.

Despite these significant advances, the literature lacks a unified framework that provides sharp, minimax-optimal rates for entropy estimation over broad classes of distributions that include those with infinite support.
Existing frequentist methods either rely on finite-support assumptions or provide rate-suboptimal error bounds \cite{antos2001convergence, zhang2012entropy, cohen2023dimension}.
While Bayesian methods naturally handle missing mass, their minimax optimality typically relies on the prior correctly matching the true tail decay, which is unknown in the frequentist setting.
This leaves a gap in the theory, which this paper aims to fill.

\subsection{Main Results}\label{mainresults}

Leveraging the definition of entropy as an expectation, we approach the estimation problem as an empirical mean of local entropy estimates over all sample points.
From this paradigm, the central challenge is estimating $-\log p(X^{(i)})$, similar to many continuous estimators.
Two prior limitations motivated the search for an algebraic alternative to the natural logarithm:
the bias of $\log \hat p_j$ for small $p_j$, and the analytical intractability of transcendental functions.
As Proposition \ref{prop:mathind} shows, harmonic-transformed empirical counts provide an accurate estimate for local entropy, bypassing the need to estimate $\hat p$ directly.
This relationship motivates our proposed estimator.

Let $\mathbf X_n :=\curly{X^{(1)},\dots,X^{(n)}}$ be an i.i.d.\ sample from $p$ and $m^{(i)} = \sum_{k=1}^n I(X^{(k)} = X^{(i)})$ be the number of occurrences of the symbol $X^{(i)}$ in the sample $\mathbf{X}_n$.
Define the harmonic estimator as
\begin{equation}\label{eq:propest}
	\hat H(\mathbf X_n) = \frac{1}{n} \sum_{i=1}^n \brac{J(n-1) -J\paren{m^{(i)}-1}}
\end{equation}
where $J(m) = \sum_{k=1}^m 1/k$ is the $m$-th harmonic number\footnote{$H_m$ is standard notation.  We use $J(m)$ to avoid confusion with entropy $H$.} and $J(0) = 0$.
This simple, one-step estimator is the foundation of our theoretical results.

Our first main result establishes the $L_2$-minimax rate of convergence for the class of distributions with quadratically decaying tails.
\begin{thm}[$L_2$-Minimax Risk] \label{thm:minimax}
	Let $\mathcal H$ be the set of entropy estimators and $\mathcal{P} := \{p: p_j \lesssim j^{-2}\}$ be the class of distributions with quadratically decaying tails.
	Then as $n\rightarrow\infty$
	\begin{equation}
		\inf_{f\in\mathcal H} \sup_{p\in \mathcal P} \E_p\brac{ \paren{f(\mathbf X_n) -H(X) }^2 } \asymp \frac{1}{n}~.
	\end{equation}
\end{thm}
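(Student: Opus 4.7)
I would prove Theorem~\ref{thm:minimax} by establishing matching $1/n$ upper and lower bounds on the minimax risk. The upper bound proceeds via a bias--variance analysis of the harmonic estimator using Propositions~\ref{prop:mathind}, \ref{prop:scndmnt}, and \ref{prop:multiProd}; the lower bound is a standard Le Cam two-point argument.

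\textbf{Upper bound.} I would apply the decomposition $\E_p[(\hat H(\mathbf X_n)-H)^2]=(\E_p\hat H-H)^2+\var_p(\hat H)$. For the bias, Proposition~\ref{prop:mathind} (applied conditionally on $X^{(i)}=j$, noting that $m^{(i)}-1\sim\mathrm{Bin}(n-1,p_j)$) produces the exact identity $J(n-1)-\E[J(m^{(i)}-1)\mid X^{(i)}=j]=\sum_{k=1}^{n-1}(1-p_j)^k/k$. Since $-\log p_j=\sum_{k=1}^{\infty}(1-p_j)^k/k$, marginalizing over $j$ yields $\E_p\hat H-H=-\sum_j p_j\sum_{k=n}^{\infty}(1-p_j)^k/k$. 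Using $\sum_{k=n}^{\infty}a^k/k=\int_0^a t^{n-1}/(1-t)\,dt$, substituting $s=1-t$, and applying Fubini, the bias simplifies to $\int_0^1 s^{-1}(1-s)^{n-1}F(s)\,ds$ where $F(s):=\sum_{j:p_j\le s}p_j$. The tail hypothesis $p_j\lesssim j^{-2}$ forces $F(s)\lesssim\sqrt s$, so the bias is bounded by the Beta function $B(1/2,n)\asymp n^{-1/2}$, giving $\mathrm{Bias}^2\lesssim 1/n$ uniformly on $\mathcal P$. For the variance I would decompose $\var_p(\hat H)=n^{-1}\var_p(Y_1)+\tfrac{n-1}{n}\cov_p(Y_1,Y_2)$ with $Y_i:=J(n-1)-J(m^{(i)}-1)$. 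Proposition~\ref{prop:scndmnt} supplies an exact formula for the conditional second moment of $Y_1$; its leading $(\log n)^2$ pieces cancel against $(\E Y_1)^2$ to yield $\var(Y_1)=O(1)$, uniformly bounded by $\var[\log p(X)]$, which is finite on $\mathcal P$. Proposition~\ref{prop:multiProd} furnishes the analogous exact expression for $\E[Y_1Y_2]$, whose non-vanishing contribution comes from the event $X^{(1)}=X^{(2)}$; after subtracting $(\E Y_1)^2$ the remaining cancellation leaves $\cov(Y_1,Y_2)=O(1/n)$. Combining, $\var_p(\hat H)=O(1/n)$, closing the upper bound.

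\textbf{Lower bound.} Consider the two-point subfamily $p^{(\pm)}=(2/3\mp\eta,\,1/3\pm\eta,\,0,0,\dots)\subset\mathcal P$ with $\eta=c/\sqrt n$ for a small constant $c$. The binary entropies differ by $2\eta\log 2+O(\eta^2)\asymp n^{-1/2}$, while $\mathrm{KL}(p^{(+)}\,\|\,p^{(-)})=O(\eta^2)=O(1/n)$, so by Pinsker's inequality $\|\prob_{p^{(+)}}^{\otimes n}-\prob_{p^{(-)}}^{\otimes n}\|_{\mathrm{TV}}\le 1/2$. Le Cam's two-point lemma then yields $\inf_{\hat H}\sup_{p\in\mathcal P}\E_p[(\hat H-H)^2]\gtrsim \eta^2\asymp 1/n$.

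\textbf{Main obstacle.} The critical step is the variance analysis. The contributions to $\E Y_1^2$ and $\E[Y_1Y_2]$ individually scale like $(\log n)^2$, so only the exact cancellations unlocked by Propositions~\ref{prop:scndmnt}--\ref{prop:multiProd}---not any crude uniform bound such as $|Y_i|\le J(n-1)$---can reveal the true $1/n$ decay of $\cov(Y_1,Y_2)$. A secondary subtlety is that the bias calculation is tight: the estimate $F(s)\lesssim\sqrt s$ relies directly on the quadratic tail, so slower decay would degrade the Beta integral and break the parametric rate.
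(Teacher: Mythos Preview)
Your proposal is correct and follows the paper's architecture: the upper bound via a bias--variance analysis of $\hat H$ (packaged in the paper as Theorem~\ref{thm:risk}, built on Proposition~\ref{bias} and Theorem~\ref{var}), and the lower bound via Le Cam's two-point method (Proposition~\ref{thm:minimaxlower}) with a binary subfamily. Your bias computation through the integral representation $\int_0^1 s^{-1}(1-s)^{n-1}F(s)\,ds$ with $F(s)=\sum_{j:p_j\le s}p_j\lesssim\sqrt{s}$ is a clean alternative to the paper's route through Proposition~\ref{lem:missingmass}, and it lands on the same $B(1/2,n)\asymp n^{-1/2}$ bound.

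Two imprecisions in your variance sketch are worth correcting. First, $\E[Y_1^2]$ and $(\E Y_1)^2$ are not of order $(\log n)^2$; they equal $\E[(\log p_X)^2]+o(1)$ and $H(X)^2+o(1)$ respectively, so the cancellation producing $\var(Y_1)=\var[\log p_X]+o(1)$ is between $p$-dependent constants, not growing terms---Proposition~\ref{prop:scndmnt} delivers this directly. Second, the bound $\cov(Y_1,Y_2)=O(1/n)$ is not driven by the event $\{X^{(1)}=X^{(2)}\}$: in the paper's argument (Lemma~\ref{lem:cov} and Proposition~\ref{covarMulti}) the $X=Y$ and $X\ne Y$ contributions are individually of the same order, and a further cancellation \emph{between} them---which requires Proposition~\ref{prop:sumtoint} on top of Proposition~\ref{prop:multiProd}---is what yields the final $O(n^{\alpha-2}\log n)$ rate. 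These are expository rather than structural issues; your overall plan matches the paper and is sound.
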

This result complements the minimax results of \cite{jiao2015minimax, wu2016large} but they apply to different, overlapping settings.
The results in \cite{jiao2015minimax, wu2016large} allow support size to grow with sample size where variance dominates when $s \ll n$ and squared bias dominates otherwise.
Our result applies to the fixed parameter space $\mathcal P$ as $n$ increases, where the convergence rate of the variance is always represented.
However, all results agree in convergence rate for fixed, finite support as $n$ increases.

Our second main result demonstrates that under slightly stronger tail conditions, the harmonic estimator is semiparametrically efficient.
\begin{thm}[Asymptotic Normality]\label{clt}
	If $p_j = o\paren{j^{-2}}$, then as $n\rightarrow \infty$,
	\begin{equation}
		\sqrt n\paren{\hat H\paren{\mathbf X_n}-H(X)} \leadsto \text{N}(0, \var[\log p(X)])
	\end{equation}
	where $\leadsto$ indicates convergence in distribution.
\end{thm}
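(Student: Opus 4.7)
The plan is to decompose $\sqrt n(\hat H-H)$ into a deterministic bias, a centred iid sum that supplies the Gaussian limit with variance $\var[\log p(X)]$, and two remainder pieces that vanish in probability. All three parts are driven by the exact identities of Propositions~\ref{prop:mathind}, \ref{prop:scndmnt}, and \ref{prop:multiProd}; the sharp algebraic cancellation they provide is what makes the limiting variance match the semiparametric efficiency bound exactly.

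\emph{Bias step.} By Proposition~\ref{prop:mathind}, conditioning on $X^{(i)}$ gives the exact identity $\E[J(n-1)-J(m^{(i)}-1)\mid X^{(i)}]=-\log p(X^{(i)})-r_n(p(X^{(i)}))$ with $r_n(p):=\sum_{k=n}^{\infty}(1-p)^k/k$, so $\E\hat H-H=-\E[r_n(p(X))]$. I would bound $p\,r_n(p)$ by $(1-p)^n/n$ when $np\ge 1$ and by $p|\log(np)|$ when $np<1$; splitting the sum at $np_j=1$ and invoking $p_j=o(j^{-2})$ upgrades the $O(n^{-1/2})$ estimate that holds under only $p_j\lesssim j^{-2}$ to the strict $o(n^{-1/2})$ required for $\sqrt n(\E\hat H-H)\to 0$. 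This is the single place where the strict little-$o$ tail decay is used.

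\emph{Decomposition.} Let $Y_i:=J(n-1)-J(m^{(i)}-1)$ and $Z_i:=Y_i-\E[Y_i\mid X^{(i)}]$. Subtracting $\E\hat H=H-\E[r_n(p(X))]$ from $\hat H$ and regrouping yields
\begin{equation*}
\sqrt n(\hat H-\E\hat H)=-\frac{1}{\sqrt n}\sum_{i=1}^n\bigl(\log p(X^{(i)})+H\bigr)-\frac{1}{\sqrt n}\sum_{i=1}^n\bigl(r_n(p(X^{(i)}))-\E[r_n(p(X))]\bigr)+\frac{1}{\sqrt n}\sum_{i=1}^n Z_i.
\end{equation*}
The first sum is iid mean-zero with variance $\var[\log p(X)]<\infty$ (already under $p_j\lesssim j^{-2}$, since $x(\log x)^2$ is bounded on $(0,1)$ and $\sum_j(\log j)^2/j^2<\infty$), so the classical CLT gives the target $\text{N}(0,\var[\log p(X)])$ limit. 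The second sum has variance $\var(r_n(p(X)))/1\to 0$ by dominated convergence ($r_n(p)\to 0$ pointwise and $r_n(p)\le|\log p|\in L^2(p)$), hence is $o_P(1)$.

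\emph{Conditional-noise remainder, and main obstacle.} The residual sum $\tfrac{1}{\sqrt n}\sum_i Z_i$ is the crux, because the multinomial coupling of the counts $m^{(i)}$ makes the $n-1$ cross terms a priori of order one. By exchangeability its variance is $\var(Z_1)+(n-1)\cov(Z_1,Z_2)$. The diagonal $\var(Z_1)=\sum_j p_j\var(J(\mathrm{Bin}(n-1,p_j)))$ is controlled by $\var(J(\mathrm{Bin}(n-1,p)))\lesssim\min(np,1/(np))$ combined with a split on $np_j\gtrless 1$, giving $\var(Z_1)=o(1)$ under the tail bound. For the off-diagonal, Propositions~\ref{prop:scndmnt} and \ref{prop:multiProd} furnish exact expressions for $\E[J(m^{(1)}-1)J(m^{(2)}-1)\mid X^{(1)},X^{(2)}]$ in both the $X^{(1)}=X^{(2)}$ and $X^{(1)}\neq X^{(2)}$ cases; subtracting the product of the corresponding conditional means (obtained from Proposition~\ref{prop:mathind} with $n-1$ in place of $n$) produces algebraic cancellation of the $O(1/n)$ leading term, leaving an $O(1/n^2)$ residual that, summed against $p_{j_1}p_{j_2}$ and multiplied by $n-1$, contributes $o(1)$. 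Slutsky then assembles the bias, the iid Gaussian sum, and the two $o_P(1)$ remainders into $\sqrt n(\hat H-H)\leadsto\text{N}(0,\var[\log p(X)])$. The chief difficulty is precisely this off-diagonal cancellation: any approximate bound on $\cov(Z_1,Z_2)$ would leave an $O(1)$ excess in the asymptotic variance and destroy the match with the semiparametric efficiency bound, so the exactness of Proposition~\ref{prop:multiProd} is essential.
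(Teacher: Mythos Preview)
Your approach is correct and is essentially the oracle-comparison argument the paper uses: both routes show $\sqrt n(\hat H-H^*)\to 0$ in probability, where $H^*(\mathbf X_n)=-\tfrac{1}{n}\sum_i\log p(X^{(i)})$, and then invoke the CLT on the iid oracle. The paper packages this via the bounded Lipschitz metric and bounds $\var(\hat H-H^*)$ by expanding it as $\var(\hat H)+\var(H^*)-2\cov(\hat H,H^*)$, thereby reusing Theorem~\ref{var} as a black box together with a short covariance calculation (Proposition~\ref{prop:vardiff}). Your decomposition $\hat H-\E\hat H=(H^*-H)+R_1+R_2$ with $R_1$ the centred iid $r_n$-fluctuation and $R_2=n^{-1}\sum_i Z_i$ is the same object reorganised; it is perhaps more transparent statistically but forces you to redo the cross-covariance analysis around $\cov(Z_1,Z_2)$ rather than inheriting it from the variance theorem.

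Two small cautions. First, your claimed ``$O(1/n^2)$ residual'' for the off-diagonal is too optimistic pointwise: for fixed $p,q$ the covariance in Proposition~\ref{covarMulti} is only $O(1/n)$, and it is the averaging against $p_{j_1}p_{j_2}$ together with the tail condition that brings the sum to $O(\log n/n^{2-\alpha})$ (Lemma~\ref{lem:cov}); multiplied by $n-1$ this is still $o(1)$, so your conclusion survives. Second, $\cov(Z_1,Z_2)$ is not literally the conditional covariance $\cov(Y_1,Y_2\mid X^{(1)},X^{(2)})$ averaged, because $\E[Y_1\mid X^{(1)}]$ (based on $\mathrm{Bin}(n-1,p)$) differs from $\E[Y_1\mid X^{(1)},X^{(2)}]$ (based on $\mathrm{Bin}(n-2,p)$ plus an indicator); the discrepancy introduces extra cross terms $\cov(g(X^{(1)}),Y_2)$ that the paper handles explicitly in Proposition~\ref{prop:vardiff}, and you would need the same bookkeeping.
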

The asymptotic variance is the minimum possible, matching the semiparametric variance bound for this problem.

The condition $p_j\lesssim j^{-2}$ represents the boundary where the squared bias becomes $\mathcal{O}(1/n)$, matching the variance.
While this achieves the optimal rate, the bias is of the same order as the standard deviation, so $\sqrt n(\hat H - H)$ does not converge to a zero-mean normal.
Thus, the stronger condition $p_j = o(j^{-2})$ is necessary for the $\sqrt n$-scaled bias to vanish, achieving asymptotic normality and efficiency.

\section{Risk Analysis of the Harmonic Entropy Estimator}\label{theory}
In this section, we provide an analysis of the proposed estimator.
Our primary goal is to determine its $L_2$-risk, which sets a constructive upper bound for the minimax risk.
We achieve this by decomposing the mean squared error (MSE) into its two components, the squared bias and the variance, bounding each separately.

Central to this analysis are three novel, combinatorial identities (Propositions \ref{prop:mathind}, \ref{prop:scndmnt}, \ref{prop:multiProd}) that yield exact, non-asymptotic expressions for first- and second-order moments of harmonic-transformed binomial and multinomial counts.
These identities circumvent the need for Taylor remainder bounds on higher-order binomial moments, which impede the analysis of logarithm-based estimators, hindering risk analysis in infinite-support settings.
The precision of our results is a direct consequence of an algebraic strategy adapted from Knuth \cite[Sec. 1.2.7]{knuth1997art} on summations involving harmonic numbers and binomial coefficients.
The estimator has a more parsimonious form if expressed with the digamma function.\footnote{$\psi(z) =J(z-1)-\gamma$ for $z\in\N$ where $\gamma$ is the Euler-Mascheroni constant}
Despite its similarity to other estimators \cite{kozachenko1987sample, grassberger2003entropy} that use the digamma function, we highlight the harmonic number function for two reasons: it is elementary in comparison and sufficient for the analytical approach we take here.

The following result builds on this foundation, bounding the harmonic estimator's MSE.
Define $\alpha=0$ to represent the class of distributions with tails decaying faster than any power law $p_j =o(j^{-1/\alpha})$ for all $\alpha \in (0,1)$, such as those with exponential decay or finite support.
The resulting bounds for this class hold with $\alpha=0$ throughout this paper.
\begin{thm} [$L_2$-Risk Upper Bound]\label{thm:risk}
	If $p_j \lesssim j^{-1/\alpha}$ with $\alpha\in[0,1)$, then
	\begin{equation}\label{ineq:risk}
		\E\brac{ \paren{ \hat H(\mathbf X_n) -H(X)}^2 } \lesssim \frac{1}{n^{2(1-\alpha)}} + \frac{1}{n}~.
	\end{equation}
\end{thm}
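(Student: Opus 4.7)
The plan is to decompose the mean squared error as squared bias plus variance and bound each using the exact combinatorial identities in Propositions \ref{prop:mathind}, \ref{prop:scndmnt}, and \ref{prop:multiProd}. The unifying theme is that these identities replace Taylor-type approximations with closed-form expressions whose remainders can be controlled directly from the tail decay $p_j \lesssim j^{-1/\alpha}$.

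For the bias, exchangeability of the summands in $\hat H(\mathbf X_n)$ reduces the computation to $\E[J(n-1) - J(m^{(1)}-1)]$. Conditioning on $X^{(1)} = j$ makes the remaining contribution to $m^{(1)} - 1$ a Binomial$(n-1,p_j)$ variable, and Proposition \ref{prop:mathind} supplies an exact expression for its expected harmonic transform as a finite-sum approximation to $-\log p_j$ with an explicit remainder. Summing against $p_j$ and subtracting $-\log p_j$, the bias becomes $\sum_j p_j R_n(p_j)$ for a remainder $R_n$ that is essentially negligible when $np_j$ is bounded away from zero and grows only logarithmically otherwise. Splitting the series at a threshold $j_n \asymp n^\alpha$ and bounding each piece using $p_j \lesssim j^{-1/\alpha}$ yields $|\E[\hat H] - H| \lesssim n^{-(1-\alpha)}$, hence squared bias $\lesssim n^{-2(1-\alpha)}$.

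For the variance, $\var(\hat H) = n^{-2}\sum_{i,k}\cov(Y^{(i)}, Y^{(k)})$ with $Y^{(i)} := J(n-1) - J(m^{(i)}-1)$. Because each $Y^{(i)}$ depends on all $n$ samples through $m^{(i)}$, the summands are dependent even though the underlying $X^{(i)}$ are i.i.d., so the usual $1/n$ rate is not automatic. The $n$ diagonal terms $\var(Y^{(i)})$ are evaluated via Proposition \ref{prop:scndmnt}, which gives $\E[J(m^{(1)}-1)^2 \mid X^{(1)}=j]$ in closed form; under the tail condition, the sum over $j$ stays bounded, so the diagonal contributes $O(1/n)$. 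The $n(n-1)$ off-diagonal covariances are the principal obstacle: a crude bound gives $O(1)$ per pair and thus a useless overall $O(1)$ bound. Proposition \ref{prop:multiProd} removes this obstacle by providing $\E[J(m^{(1)}-1) J(m^{(2)}-1) \mid X^{(1)}=j, X^{(2)}=k]$ exactly, so that subtracting the product of marginal expectations exposes the algebraic cancellations and leaves a covariance of order $1/n$ per pair. Summing gives an off-diagonal contribution of $O(1/n)$, matching the diagonal. Combining the two bounds yields the advertised MSE upper bound $n^{-2(1-\alpha)} + n^{-1}$.
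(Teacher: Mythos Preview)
Your plan matches the paper's: decompose the MSE into squared bias plus variance and control each via the exact identities in Propositions~\ref{prop:mathind}, \ref{prop:scndmnt}, and~\ref{prop:multiProd}. Two technical remarks are worth making.

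First, for the bias the paper does not split at a threshold $j_n\asymp n^\alpha$; instead it writes the bias exactly as $-\sum_{k\ge n}\E[(1-p_X)^k]/k$ (Proposition~\ref{bias}) and bounds $\E[(1-p_X)^k]\le Ck^{-(1-\alpha)}$ uniformly in $k$ via a Riemann--Stieltjes/integration-by-parts argument on the level-set function $G(x)=\abs{\{j:p_j>x\}}$ (Proposition~\ref{lem:missingmass}). This sums directly to $O(n^{-(1-\alpha)})$ and is cleaner than balancing two regimes in $j$.

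Second, your covariance step has a small omission: conditioning on $X^{(1)}=j,\,X^{(2)}=k$ leaves only $n-2$ free trials, and when $j=k$ one has $m^{(1)}=m^{(2)}$ with $m^{(1)}-1$ equal to that binomial count \emph{plus one} (since $X^{(2)}$ also contributes to $m^{(1)}$). Proposition~\ref{prop:multiProd} therefore covers only $j\neq k$; the paper handles the diagonal $j=k$ separately via the shift $J(m_X+1)=J(m_X)+1/(m_X+1)$ and an additional moment calculation (Proposition~\ref{prop:addterms}). This is easily patched but must be done.

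Finally, note that for Theorem~\ref{thm:risk} itself the leading constant in the variance is irrelevant, so the paper also records an Efron--Stein argument (Proposition~\ref{thm:varbound}) that yields $\var(\hat H)\lesssim 1/n$ with far less machinery than you outline; the full covariance calculus is only needed for the sharper Theorem~\ref{var} and hence for the efficiency claim.
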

The bound reveals a phase transition: 
For $\alpha \in [0,1/2]$ (tails decaying as fast or faster than $j^{-2}$), the risk is dominated by the variance and converges at the $\mathcal{O}(1/n)$ rate.
For $\alpha \in (1/2,1)$ (heavier tails), the risk is dominated by the squared bias and converges at the slower $\mathcal{O}(n^{-2(1-\alpha)})$ rate.
The phase transition characterizes the influence of bias in infinite-support settings, as the bias rate depends on tail decay whereas the variance rate remains $\mathcal{O}(1/n)$.
The transition is a consequence of the squared bias eclipsing the variance as tails become heavier.

\subsection{Bias}
In this section, we analyze the bias of the harmonic estimator.
A primary challenge in nonparametric entropy estimation has been the difficulty of accurately characterizing the bias of the standard plug-in estimator, particularly in the infinite-support setting.
For clarity and contrast, consider $-\E\brac{\hat p_j \log \hat p_j}$ for symbol $j$; a Taylor expansion about $p_j$ yields the following:
\begin{equation}
	-\E\brac{ \hat p_j \log \hat p_j} +p_j \log p_j
	= -\frac{1-p_j}{2n}-\sum_{k=3}^\infty \frac{ \E\brac{\paren{p_j -\hat p_j}^k } }{k(k-1) p_j^{k-1} } ~.
\end{equation}
Summing over $j$ gives the plug-in's bias, showing that the second-order term depends on the support size of the distribution.
Moreover, higher-order central moments are not easily characterized, requiring a Lagrange remainder that increases as $p_j$ approaches zero.

The structure of the harmonic estimator allows for an exact characterization of its bias, without an approximation bound.
The key is the following identity for the expected value of a harmonic-transformed binomial random variable.
\begin{prop}\label{prop:mathind}
	Let $M$ be a binomial random variable of $n\in\N$ trials and success probability $p\in (0,1]$.
	Then
	\begin{equation}
		\E[J(n) -J(M)] = \sum_{m=1}^n \frac{(1-p)^m}{m} = -\log p -\sum_{m=n+1}^\infty \frac{(1-p)^m}{m}~.
	\end{equation}
\end{prop}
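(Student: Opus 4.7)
The plan is to reduce the first equality $\E[J(n) - J(M)] = \sum_{m=1}^n (1-p)^m/m$ to elementary calculus via an integral representation of the harmonic-number difference, then apply the binomial theorem to collapse the resulting integral. The second equality is then immediate: $-\log p = \sum_{m=1}^\infty (1-p)^m/m$ is just the Taylor series of $-\log(1-x)$ at $x = 1-p \in [0,1)$ (the boundary $p=1$ being trivial), and separating the tail gives the claim.

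For the first equality, I would start from the identity
\[
J(n) - J(m) = \int_0^1 \frac{(1-u)^m - (1-u)^n}{u}\, du, \qquad 0 \le m \le n,
\]
which one verifies by writing the numerator as $u\,(1-u)^m \sum_{j=0}^{n-m-1}(1-u)^j$ and integrating term-by-term. Taking expectations, interchanging expectation and integral (justified by nonnegativity of the integrand for $u\in(0,1)$), and invoking the binomial probability generating function $\E[(1-u)^M] = (1-up)^n$ yields
\[
\E[J(n) - J(M)] = \int_0^1 \frac{(1-up)^n - (1-u)^n}{u}\, du.
\]
The decisive algebraic move is to expand $(1-up)^n = [(1-u) + u(1-p)]^n$ by the binomial theorem rather than in powers of $up$: the $k=0$ summand cancels the stray $(1-u)^n$, and after dividing by $u$ each remaining term becomes a beta integral $\int_0^1 (1-u)^{n-k} u^{k-1}\, du = 1/(k\binom{n}{k})$. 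The binomial coefficients cancel, leaving exactly $\sum_{k=1}^n (1-p)^k/k$.

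The main subtlety is this choice of expansion: the naive expansion of $(1-up)^n$ in powers of $up$ produces a sum involving $J(n)$ that requires an additional combinatorial step to close. Should the integral route prove awkward to formalize, a safe fallback is induction on $n$. Couple $M_{n+1} = M_n + Y$ with $Y$ an independent $\mathrm{Bernoulli}(p)$ increment, so that $J(M_{n+1}) - J(M_n) = Y/(M_n+1)$, take expectations, and close the recursion using the classical binomial identity $p\,\E[1/(M_n+1)] = (1 - (1-p)^{n+1})/(n+1)$, which itself follows from $\binom{n}{m}/(m+1) = \binom{n+1}{m+1}/(n+1)$. Either route lands the exact, non-asymptotic expression promised by the proposition.
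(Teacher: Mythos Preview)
Your proposal is correct. Your primary route via the integral representation $J(n)-J(m)=\int_0^1 \frac{(1-u)^m-(1-u)^n}{u}\,du$, followed by the expansion $(1-up)^n=[(1-u)+u(1-p)]^n$ and beta-integral cancellation, is genuinely different from the paper's argument. The paper proceeds by induction on $n$: setting $S_n=\E[J(M)]$, applying Pascal's identity $\binom{n+1}{m}=\binom{n}{m}+\binom{n}{m-1}$, and deriving the recursion $S_{n+1}=S_n+\frac{1-(1-p)^{n+1}}{n+1}$ --- essentially the algebraic version of your fallback. Your integral method is direct (no need to guess the closed form before verifying it) and isolates the cancellation cleanly in one stroke. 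The paper's induction, however, is the template it reuses for the harder second-moment and cross-moment identities (Propositions~\ref{prop:scndmnt} and~\ref{prop:multiProd}), where an analogous integral representation would be considerably more involved; its value lies in methodological uniformity across the suite of identities.
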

\begin{proof}[Proof Sketch]
	The proof follows from a mathematical induction on the number of trials, $n$.
	The full derivation is provided in Appendix \ref{apdx:bias}
\end{proof}
This proposition is our bridge to the logarithm of the success probability.
Its remainder resolves the binomial central moments problem of log-transformed approaches.
Applying Proposition \ref{prop:mathind} with $n-1$ trials\footnote{Corresponding to the $n-1$ other sample points observed for each $X^{(i)}$} allows us to derive the bias of the harmonic estimator.
\begin{prop} \label{bias}
	If $H(X) <\infty$, then
	\begin{equation} \label{eq:genbias}
		\E\brac{\hat{H}(\mathbf X_n)} -H(X) = -\sum_{k=n}^\infty \frac{\E\brac{(1-p_X)^k}}{k}~.
	\end{equation}
	If $p_j \lesssim j^{-1/\alpha}$ for $\alpha \in [0,1)$, then for some constant $C>0$ that does not depend on $n$,
	\begin{equation} \label{eq:biasrate}
		\abs{ \E\brac{\hat{H}(\mathbf X_n)} -H(X) } \leq \frac{C}{n^{1-\alpha}}~.
	\end{equation}
	Additionally, if $p_j =o(j^{-2})$, then $\text{Bias} =o(1/\sqrt n)$ as $n\rightarrow\infty$.
\end{prop}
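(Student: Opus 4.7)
My plan proceeds in three stages. For the exact bias identity \eqref{eq:genbias}, I would condition on $X^{(1)} = j$. Since $\hat H$ averages identically distributed summands, $\E\brac{\hat H(\mathbf X_n)} = \E\brac{J(n-1) - J(m^{(1)}-1)}$, and on the event $\{X^{(1)} = j\}$ the count $m^{(1)}-1 = \sum_{i=2}^{n} I(X^{(i)} = j)$ is distributed as $\text{Binomial}(n-1, p_j)$. Applying Proposition \ref{prop:mathind} with $n-1$ trials and success probability $p_j$ yields
\begin{equation*}
	\E\brac{J(n-1) - J(m^{(1)}-1) \mid X^{(1)} = j} = -\log p_j - \sum_{k=n}^\infty \frac{(1-p_j)^k}{k},
\end{equation*}
and averaging over $j$ with $H(X) = -\E[\log p(X)]$ produces \eqref{eq:genbias} directly.

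Second, I would establish a uniform moment bound $\E\brac{(1-p_X)^k} \lesssim k^{-(1-\alpha)}$ under $p_j \lesssim j^{-1/\alpha}$. Using $(1-p)^k \leq e^{-kp}$ and the envelope $pe^{-kp} \leq 1/(ek)$, I split $\sum_j p_j e^{-kp_j}$ at the threshold $p_j = 1/k$. By monotonicity and the tail bound, at most $\lesssim k^\alpha$ indices satisfy $p_j \geq 1/k$, each contributing at most $1/(ek)$, for a total of $\lesssim k^{\alpha-1}$. The remaining mass $\sum_{j:\, p_j < 1/k} p_j$ is controlled by partitioning its index set at $j \asymp k^\alpha$: the low-$j$ segment contributes $\lesssim (1/k)\cdot k^\alpha = k^{\alpha-1}$ (using $p_j \leq 1/k$), and the high-$j$ segment contributes $\sum_{j > k^\alpha} Cj^{-1/\alpha} \lesssim k^{\alpha-1}$ via the envelope. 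Substituting into \eqref{eq:genbias} gives $|\text{Bias}| \leq \sum_{k=n}^\infty k^{-(2-\alpha)} \lesssim n^{-(1-\alpha)}$, which is \eqref{eq:biasrate}.

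Third, for the $o(j^{-2})$ refinement, I would sharpen this to $\sqrt k\,\E\brac{(1-p_X)^k} \to 0$. Writing $p_j \leq \epsilon_j j^{-2}$ with $\epsilon_j \to 0$, fix $\delta > 0$ and choose $J$ so that $\epsilon_j < \delta$ for $j > J$. Repeating the splitting argument, the indices $j \leq J$ contribute $\leq J/(ek)$ and those $j > J$ contribute $\lesssim \sqrt{\delta/k}$, so $\limsup_k \sqrt k \,\E\brac{(1-p_X)^k} \leq C\sqrt\delta$; sending $\delta \to 0$ yields the claim. Consequently,
\begin{equation*}
	|\text{Bias}| \leq \sum_{k=n}^\infty \frac{\E\brac{(1-p_X)^k}}{k} \leq \paren{\sup_{k\geq n} \sqrt k \,\E\brac{(1-p_X)^k}} \sum_{k=n}^\infty k^{-3/2} = o(1)\cdot O(n^{-1/2}) = o(n^{-1/2}).
\end{equation*}

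The main obstacle is the uniform moment estimate of stage 2. Crude bounds such as $(1-p_j)^k \leq 1$ or the inner-series bound $\sum_{k\geq n}(1-p_j)^k/k \leq -\log p_j$ fail to reflect the tail condition and lose the rate; the correct $k^{-(1-\alpha)}$ exponent appears only after partitioning the support at $p_j = 1/k$ to separate the regime of exponential suppression from the regime of polynomial tail decay, with the inequality $xe^{-x} \leq 1/e$ balancing the two contributions.
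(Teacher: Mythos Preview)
Your proof is correct. The derivation of the exact bias identity \eqref{eq:genbias} coincides with the paper's: both condition on $X^{(1)}$, recognize $m^{(1)}-1$ as $\text{Binomial}(n-1,p_j)$, and invoke Proposition~\ref{prop:mathind}.

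Where you diverge is in establishing the key moment bound $\E\brac{(1-p_X)^k}\lesssim k^{-(1-\alpha)}$. The paper packages this (and its $o$-analogue) into a separate result, Proposition~\ref{lem:missingmass}, proved via the counting function $G(x)=\abs{\{j:p_j>x\}}$, Riemann--Stieltjes integration, and a gamma-integral substitution. Your route is more elementary: you split the sum at the threshold $p_j=1/k$, use $pe^{-kp}\le 1/(ek)$ on the heavy side, and bound the light tail directly from the envelope. Both yield the same exponent, and your $\epsilon$--$\delta$ sharpening for the $o(j^{-2})$ case is a clean refinement of the same idea. The trade-off is scope: Proposition~\ref{lem:missingmass} in the paper is stated for $\sum_j p_j^m(1-p_j)^k$ with general $m\in\N$, because the variance analysis later needs the cases $m=1,2$; your splitting argument handles $m=1$ exactly as needed here and would extend to higher $m$ with minor changes, but you have not stated that extension.
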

\begin{proof}[Proof Sketch]
	The first claim follows by taking the expectation of the estimator and applying Proposition \ref{prop:mathind}.
	We bound the bias using an integral comparison, detailed in Appendix \ref{apdx:bias}.
\end{proof}
This result (equation \ref{eq:biasrate}) explicitly shows that the estimator's bias is sensitive to a distribution's power-law tail behavior as the sample increases.
A bias calculation on a zeta-distributed random variable reveals that this bound is tight for $\alpha\in (0,1)$ (Proposition \ref{prop:tightbias}).
The general bias expression (equation \ref{eq:genbias}) vanishes as $n$ increases, illustrating why the harmonic estimator is well-suited for infinite-support problems.
This distinguishes it from other estimators that rely on the natural logarithm and avoids the need for two-stage estimation procedures designed to control bias for small $p$.

\subsection{Variance}
We now turn to the second and final component of our risk analysis.
The primary goal of this section is to establish the asymptotic variance of the harmonic estimator.
This result is the technical cornerstone of our paper's claim of semiparametric efficiency, which we formally prove in Section \ref{sec:clt}.

Analyzing the variance of functional estimators can be challenging due to the statistical dependence among the empirical counts $(m_1, m_2, \dots)$, where $m_j$ denotes the number of occurrences of outcome $j \in \mathbb{N}$ in the sample.
With techniques related to those used for the bias analysis, our variance analysis navigates this complexity to yield a precise asymptotic characterization.

\begin{thm}\label{var}
	If $p_j \lesssim j^{-1/\alpha}$ for $\alpha \in [0,1)$, then
	\begin{equation}
		\var\brac{\hat H\paren{\mathbf X_n}} = \frac{\var\brac{\log p(X)}}{n} +\mathcal O\paren{\frac{\log n}{n^{2-\alpha}} }~.
	\end{equation}
\end{thm}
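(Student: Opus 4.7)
The plan is to exploit exchangeability of the sample together with the exact moment identities in Propositions \ref{prop:mathind}--\ref{prop:multiProd}. Write $\hat{H}(\mathbf{X}_n) = n^{-1}\sum_{i=1}^n T_i$ with $T_i := J(n-1) - J(m^{(i)}-1)$. By symmetry of the $T_i$'s,
\begin{equation*}
\var\brac{\hat H(\mathbf X_n)} = \frac{\var(T_1)}{n} + \frac{n-1}{n}\cov(T_1,T_2),
\end{equation*}
so the task reduces to characterizing a single variance and a single covariance.

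First, I would compute $\var(T_1)$ by the law of total variance, conditioning on $X^{(1)}$. Given $X^{(1)} = j$, the count $m^{(1)}-1$ is $\text{Binom}(n-1, p_j)$. Proposition \ref{prop:mathind} gives $\E[T_1\mid X^{(1)}=j] = -\log p_j - r_n(p_j)$ with $|r_n(p_j)|$ controlled as in Proposition \ref{bias}, and Proposition \ref{prop:scndmnt} should supply an exact expression for $\E[T_1^2\mid X^{(1)}=j]$. Subtracting gives a closed form for the conditional variance; the goal is to show it is bounded uniformly (or after averaging against $p_j$) at rate $\mathcal{O}(\log n / n)$, using the tail condition to control sums of the form $\sum_j p_j \cdot (\text{poly}(\log n)/n)$. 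The between-class piece $\var(\E[T_1\mid X^{(1)}])$ then equals $\var[\log p(X)]$ plus an error controlled by the $r_n$ remainder, which Proposition \ref{bias} already bounds under $p_j \lesssim j^{-1/\alpha}$.

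Second, I would compute $\cov(T_1,T_2) = \E[T_1 T_2] - (\E T_1)^2$ by splitting on whether $X^{(1)}=X^{(2)}$. The diagonal contribution $\sum_j p_j^2 \E[T_1^2\mid X^{(1)}=X^{(2)}=j]$ is bounded by noting $T_1 = \mathcal{O}(\log n)$ deterministically and $\sum_j p_j^2 \lesssim \sum_j j^{-2/\alpha}$ converges for $\alpha<1$, yielding at most $\mathcal{O}((\log n)^2/n)$ after the $1/n^2$ scaling becomes apparent. For the off-diagonal case $X^{(1)}=j \ne k = X^{(2)}$, Proposition \ref{prop:multiProd} should provide an exact product-moment identity for multinomial counts; the expected outcome is that it factorizes as $[-\log p_j - r'_n(p_j,p_k)][-\log p_k - r''_n(p_j,p_k)]$ plus a small cross-correction capturing the multinomial coupling. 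Summing $p_j p_k$ over $j \ne k$ reconstructs $(\E\log p(X))^2$ to leading order, which cancels the $(\E T_1)^2$ subtraction. The residual pieces should aggregate, via the tail hypothesis, to $\mathcal{O}(\log n / n^{2-\alpha})$.

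The main obstacle is the covariance step, specifically controlling the cross-correction in the off-diagonal case of Proposition \ref{prop:multiProd}. The counts $m^{(1)}$ and $m^{(2)}$ for distinct outcomes $j \ne k$ are negatively correlated because they compete for the remaining $n-2$ sample slots, and this coupling survives the harmonic transform. The delicate bookkeeping lies in isolating the product $(\log p_j)(\log p_k)$ from the exact identity, showing the remaining algebraic terms integrate against $p_j p_k$ to yield the claimed $\mathcal{O}(\log n / n^{2-\alpha})$ rate. The tail condition $p_j \lesssim j^{-1/\alpha}$ enters precisely here, through bounds like $\sum_j p_j \log(1/p_j) \cdot n^{-1} \lesssim H(X)/n$ and $\sum_j p_j^2 \log^2(1/p_j) < \infty$ whenever $\alpha<1$; tracking which error terms require which moment of $\log p$ is the most error-prone part of the argument.
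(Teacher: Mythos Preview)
Your decomposition $\var(\hat H)=\var(T_1)/n + \tfrac{n-1}{n}\cov(T_1,T_2)$ is exactly the paper's starting point, and invoking Propositions \ref{prop:mathind}, \ref{prop:scndmnt}, \ref{prop:multiProd} is correct. But the covariance analysis has a real gap.

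First, there is no ``$1/n^2$ scaling'' on the covariance: the factor is $(n-1)/n\approx 1$, so you need $\cov(T_1,T_2)=\mathcal{O}(\log n/n^{2-\alpha})$ outright. Your crude bound on the diagonal, $\sum_j p_j^2\,\E[T_1^2\mid X^{(1)}=X^{(2)}=j]\lesssim (\log n)^2\sum_j p_j^2$, is $\mathcal{O}((\log n)^2)$, not $o(1)$. It cannot be discarded; it has to cancel against something.

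Second, and more seriously, you treat the diagonal and off-diagonal pieces as \emph{separately} small. They are not. In the paper's argument (Lemma \ref{lem:cov}), the diagonal term $\sum_i p_i^2\var[J(n)-J(m_i)]$ produces a contribution $\sum_m\sum_k \tfrac{\sum_i p_i^2(1-p_i)^m}{mk}$ that is individually only $\mathcal{O}(n^{-(1-\alpha)})$ after the relevant summation, while the off-diagonal term $\sum_{i\ne j}p_ip_j\cov[J(n)-J(m_i),J(n)-J(m_j)]$ produces $-\sum_k \tfrac{\sum_i p_i(1-p_i)}{k^2}$, also $\mathcal{O}(n^{-1})$. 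Only when these are combined and re-indexed (lines \ref{line:varcocomb}--\ref{line:simpintbound}) do they collapse to $\mathcal{O}(\log n/n^{2-\alpha})$.

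Third, your hope that Proposition \ref{prop:multiProd} ``factorizes as $[-\log p_j - r'_n][-\log p_k - r''_n]$ plus a small cross-correction'' is too optimistic. The output of Proposition \ref{prop:multiProd} is a double sum whose reduction to $\log p_j\log p_k$ requires Proposition \ref{prop:sumtoint}, a nontrivial dilogarithm/zeta identity; the cross-correction is not obviously small until that identity forces exact cancellation of the $\log(1-p)\log p$ and $\mathrm{Li}_2$ pieces. You also omit the $+1$ shift in $m^{(1)}$ when $X^{(1)}=X^{(2)}$ (handled in the paper by Proposition \ref{prop:addterms}), which generates additional $1/(m_X+1)$ terms that need separate control.

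In short: right skeleton, but the covariance does not split into two independently small pieces; it is small only because of a cancellation between the diagonal variance remainder and the off-diagonal coupling, and extracting that cancellation is where Propositions \ref{prop:sumtoint}, \ref{covarMulti}, and \ref{prop:addterms} do the real work.
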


The proof decomposes the variance into its local variance and covariance components, directly handling its multinomial covariance structure.
Common alternative techniques for bounding variance, such as \textit{Poissonization} or the Efron-Stein inequality, often provide a simpler path to establishing a rate of convergence but can be too blunt to capture the exact leading constant.
Poissonization, for instance, simplifies the analysis by rendering the empirical counts between symbols independent; however, it obscures the crucial negative covariance structure that leads to the cancellation of lower-order remainder terms.
Similarly, while the Efron-Stein inequality can readily establish a $1/n$ rate (as we sketch in Proposition \ref{thm:varbound}), it is a general bound that cannot establish the exact asymptotic constant, $\var[\log p(X)]$, required for proving efficiency.
Thus, this direct approach is necessary for demonstrating that the harmonic estimator achieves the fundamental efficiency bound.

\begin{proof}[Proof Sketch]
The proof begins by decomposing the variance into terms corresponding to individual and paired data points within the sample.
The main term, $\var[\log p(X)]/n$, follows directly from a second-order algebraic identity (Proposition \ref{prop:scndmnt}) that provides an exact expression for the second moment of the local entropy estimate, $E[(J(n) - J(M))^2]$, where $M$ represents the binomial count of a single outcome.

The most technically demanding part of the proof is showing that the remaining cross-product (covariance) terms are of a smaller order after a cancellation with a remainder from the local variance analysis.
A third identity (Proposition \ref{prop:multiProd}) simplifies the joint expectation of harmonic-transformed multinomial counts, $\E[(J(n)-J(M)) (J(n)-J(K))]$ into an algebraic expression resembling the product of logarithm Taylor expansions with remainder terms resembling the dilogarithm and the Riemann zeta function.
Proposition \ref{prop:sumtoint} characterizes this relationship as $n\rightarrow\infty$, enabling exact cancellation of lower order terms, and proving that the residual covariance is asymptotically negligible.
See Appendix \ref{apdx:var} for the full proof.
\end{proof}

This theorem is the cornerstone of our paper's contribution.
It establishes that the asymptotic variance of the harmonic estimator matches the nonparametric efficiency bound for this problem, $\var[\log p(X)]/n$.
This finding is central to the proof of asymptotic normality and efficiency.

\section{Minimax Optimality} \label{sec:minimax}

This section benchmarks the statistical limits for estimating Shannon entropy over the class of distributions with at least quadratically decaying tails, $\mathcal{P} = \{p: p_j \lesssim j^{-2}\}$.
We begin by demonstrating a universal lower bound for $L_2$-risk over non-constant distributions.

\begin{prop}[$L_2$-Risk Lower Bound]\label{thm:minimaxlower}
	Let $\mathcal H$ be the set of entropy estimators and $\mathcal U=\curly{p: p_j\neq 1, j\in\N}$ be the class of non-constant distributions.
	Then
	\begin{equation}
		\inf_{f \in \mathcal H} \sup_{p\in \mathcal U} \E_p\brac{ f(\mathbf X_n) -H(X) }^2 \gtrsim \frac{1}{n}~.
	\end{equation}
\end{prop}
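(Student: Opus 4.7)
My plan is to reduce the problem to a parametric two-point testing problem within a Bernoulli sub-family of $\mathcal U$ and invoke Le Cam's method. For $q \in (0,1)$, let $p_q$ denote the distribution supported on $\{1,2\}$ with $p_q(1) = q$ and $p_q(2) = 1-q$. Each such $p_q$ lies in $\mathcal U$ since no atom equals one, and its entropy coincides with the binary entropy $h(q) = -q\log q - (1-q)\log(1-q)$. This embedding reduces the infinite-dimensional minimax problem to an elementary parametric one over a scalar parameter.

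First I would fix a reference value $q_0 \in (0,1)$ at which $h'(q_0) = \log((1-q_0)/q_0) \neq 0$—for instance $q_0 = 1/4$—and form a local perturbation $q_1 = q_0 + c/\sqrt n$, where $c > 0$ is a small absolute constant to be chosen. A first-order Taylor expansion yields $|h(q_1) - h(q_0)| \geq |h'(q_0)|\, c/(2\sqrt n)$ for $n$ sufficiently large, so the two entropies are separated by $\asymp 1/\sqrt n$. Next I would control the product-measure separation: a direct computation gives $\KL(p_{q_0} \| p_{q_1}) = \mathcal O(c^2/n)$, hence by tensorization $\KL(P_{q_0}^n \| P_{q_1}^n) = \mathcal O(c^2)$, and Pinsker's inequality bounds the total variation $\text{TV}(P_{q_0}^n, P_{q_1}^n)$ by a constant strictly less than one once $c$ is chosen small enough.

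Finally, I would invoke Le Cam's two-point method: for any estimator $f \in \mathcal H$, $\sup_{q\in\{q_0,q_1\}} \E_{p_q}\brac{(f(\mathbf X_n) - h(q))^2} \geq \tfrac{1}{4}(h(q_1)-h(q_0))^2 (1-\text{TV}(P_{q_0}^n, P_{q_1}^n))$. Combining the $\Omega(1/\sqrt n)$ entropy separation with the bounded total variation yields a lower bound of order $1/n$, and since $\{p_{q_0}, p_{q_1}\} \subset \mathcal U$ the supremum over $\mathcal U$ inherits this bound.

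The argument is mostly standard; the sole delicate point is calibrating $c$ so that the entropy gap stays of order $1/\sqrt n$ while the cumulative KL divergence remains below a universal threshold—a classical balancing act for Le Cam's method that presents no deeper obstruction. A minor auxiliary check is verifying that the Taylor remainder for $h$ and the $\mathcal O(c^2/n)$ expansion for $\KL(p_{q_0}\|p_{q_1})$ are valid uniformly for $n$ large, which follows from the smoothness of $h$ and $\KL$ near the interior point $q_0$.
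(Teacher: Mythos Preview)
Your proposal is correct and follows essentially the same approach as the paper: both apply Le Cam's two-point method to a pair of Bernoulli distributions on $\{1,2\}$ separated by order $1/\sqrt n$, controlling the $n$-sample KL divergence to be $\mathcal O(1)$ and the entropy gap to be $\Omega(1/\sqrt n)$. The only cosmetic differences are that the paper fixes $p=(1/3,2/3)$ with perturbation $\epsilon=1/\sqrt n$ and invokes the exponential-KL form of Le Cam's bound, whereas you work at $q_0=1/4$ and pass through Pinsker's inequality to total variation; both routes are standard and yield the same $1/n$ lower bound.
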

\begin{proof}[Proof Sketch]
	The proof employs Le Cam's standard two-point method \cite[Section 2.4.2]{tsybakov2009nonparametric}, constructing two ``nearby'' distributions $p$ and $q$ supported on $\curly{1,2}$, that cannot be reliably distinguished from a sample of size $n$. 
The full proof is provided in Appendix \ref{apdx:minimax}.
\end{proof}
The distributions $p$ and $q$ are simple, particularly in comparison to infinite-support distributions in $\mathcal P$.
However, as $p,q\in\mathcal{P} \subset \mathcal{U}$, they are sufficient to set a minimax lower bound for entropy estimation, matching the variance rate of convergence.

Now, we prove our first main result, Theorem \ref{thm:minimax}: 
\begin{equation}
	\inf_{f \in \mathcal{H}} \sup_{p \in \mathcal{P}} \E_p[ (f(\mathbf{X}_n) - H(X))^2 ] \asymp \frac{1}{n}
\end{equation}
where $\mathcal{H}$ is the set of all entropy estimators.
\begin{proof}[Proof of Theorem \ref{thm:minimax}]
	The proof proceeds by establishing matching lower and upper bounds on the $L_2$-risk on $\mathcal P$.
	The lower bound in Proposition \ref{thm:minimaxlower} is $\Omega(1/n)$, the best possible performance for any estimator over $\mathcal{P}\subset\mathcal{U}$.
	The upper bound is constructively established by the performance of the harmonic estimator in Theorem \ref{thm:risk}.
	When $p\in \mathcal{P}$, its MSE is $\mathcal{O}(1/n)$.
\end{proof}

This result is the first to establish the sharp $1/n$ minimax rate for entropy estimation over a broad class of distributions that includes both finite-variance distributions \footnote{This follows readily from the second moment being finite.} and monotonic distributions with a finite mean,\footnote{ This follows from $\mu \geq \sum_{i=\floor{j/2}}^j i p_i \geq j^2 p_j/4$ for all $j\in\N$.} closing a gap in the literature on nonparametric functional estimation.

\section{Semiparametric Efficiency} \label{sec:clt}

This section characterizes the estimator's limiting distribution.
While this result is related to its minimax property, it requires a stronger condition.
We show that the harmonic estimator is not only asymptotically normal but is also semiparametrically efficient for $p_j =o(j^{-2})$, achieving the efficiency lower bound for this estimation problem.

To frame this claim, we use the theory of semiparametric efficiency.
For the entropy functional $H(p)$, the efficient influence function is given by $\psi_{\text{eff}}(x; p) = -\log p(x) -H(p)$.
The semiparametric efficiency bound is the variance of this function, $\var[\log p(X)]$, which represents the minimum possible asymptotic variance for any regular estimator \cite{bickel1993efficient, tsiatis2006semiparametric}.
An estimator is defined as semiparametrically efficient if its asymptotic variance achieves this fundamental limit.
Theorem \ref{clt} claims that the harmonic estimator achieves this goal: If $p_j = o(j^{-2})$, then as $n \to \infty$,
\begin{equation}
	\sqrt{n}\brac{\hat{H}(\mathbf{X}_n) - H(X)} \leadsto \text{N}(0, \text{Var}[\log p(X)])~.
\end{equation}

\begin{proof}[Proof Sketch]
	The proof proceeds by showing that the harmonic estimator is asymptotically equivalent to the theoretical ``oracle estimator,'' $H^*(\mathbf{X}_n) = -\sum_{i=1}^n \log p(X^{(i)})/n$ as if $p$ were known.
	This oracle, which is the empirical mean of the core component of the efficient influence function, is asymptotically normally distributed by the standard Central Limit Theorem.
	Its variance, $\var[\log p(X)]$, sets the semiparametric efficiency bound for this problem.

	We establish the asymptotic equivalence by showing that the squared, bounded Lipschitz distance between the two estimators is less than
	\begin{equation} \label{eq:cltdecomp}
		n\paren{\E\brac{\hat H(\mathbf{X}_n)} -H(X)}^2 + n \var\brac{\hat{H}(\mathbf{X}_n) - H^*(\mathbf{X}_n)} =o\paren{1} +\mathcal{O}\paren{\frac{\log n}{n^{1/2}}} \rightarrow 0.
	\end{equation}
	This implies that the harmonic estimator must have the same limiting distribution as the oracle estimator.
	The full technical proof, which relies on the variance characterization established in Section \ref{theory}, is provided in Appendix \ref{apdx:clt}.
\end{proof}

The sufficient condition $p_j = o(j^{-2})$ for efficiency is stronger than that for minimax optimality $p_j\lesssim j^{-2}$.
Line \ref{eq:cltdecomp} illustrates its necessity:
At the minimax boundary $p_j \asymp j^{-2}$, the estimator achieves the optimal $L_2$-convergence rate but fails the central limit theorem due to non-vanishing asymptotic bias.
That is, the bias is $\mathcal{O}(n^{-1/2})$ at the boundary, making the $n$-scaled squared bias $\mathcal{O}(1)$, which does not vanish.
Only under the stronger $o(j^{-2})$ condition does the bias become $o(n^{-1/2})$, ensuring the $n$-scaled squared bias is $o(1)$ and thus that the estimator is asymptotically unbiased.

\section{Conclusion and Future Work} \label{sec:conclusion}
This work resolves an open problem in nonparametric functional estimation by establishing the sharp, $L_2$-minimax rate for Shannon entropy estimation over a broad class of discrete distributions including those with finite variance.
Our first main result shows that for the class of distributions with at least quadratically decaying tails $p_j \lesssim j^{-2}$, this rate is $\asymp 1/n$.
Our second main result demonstrates that under the slightly stronger condition $p_j = o(j^{-2})$, the proposed harmonic entropy estimator is semiparametrically efficient, achieving the theoretical lower bound on asymptotic variance.

These results were made possible by a key methodological innovation: shifting the estimation framework from the traditional logarithm to the harmonic number function.
This shift is motivated by exact, non-asymptotic algebraic identities that relate the moments of harmonic-transformed binomial and multinomial counts to their underlying log-probabilities.
This approach bypasses the need for the Taylor remainder bounds that complicate analysis with higher-order, central binomial moments.
Moreover, it allows for a precise characterization of both the bias and variance, including its covariance structure.

This theoretical framework opens several promising avenues for future research, particularly for translating this estimator into a practical inferential tool.
The most critical step is to establish the consistency of a plug-in variance estimator.
The asymptotic normality result (Theorem \ref{clt}) shows that the asymptotic variance is $\var[\log p(X)]$.
The natural plug-in estimator for this quantity is the sample variance of the transformed observations:
\begin{equation}
	\hat{\sigma^2} = \frac{1}{n} \sum_{i=1}^n \brac{ J(n-1) -J\paren{m^{(i)}-1} }^2 -\paren{\hat H(\mathbf{X}_n)}^2~.
\end{equation}
A rigorous proof of consistency is a significant undertaking, likely requiring a non-trivial fourth-order analysis of harmonic-transformed counts.
However, our algebraic analysis provides a strong foundation for this conjecture.
It is reasonable to hypothesize analogous, fourth-order identities will hold for $p_j=o(j^{-2})$.
Such a result would enable valid, Wald-style inference for entropy.

A formal bias correction would improve finite-sample performance.
The harmonic estimator is rate-optimal, but it exhibits a negative bias that, while asymptotically vanishing under efficiency conditions, can be notable in settings with smaller samples and heavier tails.
Our exact mathematical expression for this bias provides an ideal foundation for such an investigation, and may broaden the class where minimax optimality results hold.
Whereas the primary aim of this paper has been to establish the statistical limits with the harmonic entropy estimator, these inferential and finite-sample refinements remain rich and important areas for subsequent research.

On a broader theoretical level, our work reveals a compelling parallel between entropy estimation on discrete and continuous distributions.
The functional form of our estimator, which relies on the identity linking harmonic numbers to the logarithm, is a striking analogue to the classic Kozachenko-Leonenko nearest-neighbor estimator for differential entropy \cite{kozachenko1987sample}, the analysis of which relies on a similar identity involving the digamma function.
This shared mathematical foundation, hinted at in \cite{mesner2020conditional}, points toward a unifying principle in information-theoretic estimation.
Exploring whether the analytical techniques developed here can be extended to provide new insights into the theory of continuous or mixed, discrete/continuous information estimators is a promising direction for future research.

\appendices

\section{Bias Analysis and Technical Lemmas}\label{apdx:bias}

\begin{proof}[Proof of Prop. \ref{prop:mathind}]
	Using induction on $n$, let $S_n := \sum_{m=0}^n J(m) \binom{n}{m} p^m(1-p)^{n-m}$ represent the expected value for $n$ trials.
	Using the identity $\binom{n+1}{m} = \binom{n}{m} +\binom{n}{m-1}$, the binomial theorem, and $J(m+1) = J(m) +\frac{1}{m+1}$, with re-indexing
	\begin{align}
		S_{n+1}
		&= \sum_{m=0}^{n+1} J(m) \binom{n+1}{m} p^m(1-p)^{n+1-m} \\
		&= \sum_{m=0}^{n} J(m) \binom{n}{m} p^m(1-p)^{n+1-m} +\sum_{m=0}^{n} J(m+1) \binom{n}{m} p^{m+1}(1-p)^{n-m} \\
		&= (1-p) \sum_{m=0}^{n} J(m) \binom{n}{m} p^m(1-p)^{n-m} +p \sum_{m=0}^{n} J(m) \binom{n}{m} p^{m}(1-p)^{n-m} \\
		&\quad +\sum_{m=0}^{n} \frac{1}{m+1} \binom{n}{m} p^{m+1}(1-p)^{n-m} \\
		&= S_n +\frac{1}{n+1} \sum_{m=1}^{n+1} \binom{n+1}{m} p^m(1-p)^{n+1-m} 
		= S_n +\frac{ 1-(1-p)^{n+1}}{n+1} ~.
	\end{align}
	Because $S_1=p$, the statement is true for $n=1$.
	These together are sufficient to show that
	\begin{equation}
		\E[J(M)] = S_n = J(n)-\sum_{m=1}^n \frac{(1-p)^m}{m} ~.
	\end{equation}
	Recall the Taylor series for $-\log x$ centered at $1$ is $-\log x = \sum_{m=1}^\infty \frac{(1-x)^m}{m}$ and converges for $x\in(0,2]$ to conclude the proof.
\end{proof}

\begin{proof}[Proof of Prop. \ref{bias}]
	For $i\in \curly{1,\dots, n}$, $m^{(i)} = 1+\sum_{k\neq i}^n I(X^{(i)} =X^{(k)})$, so $m^{(i)}$ follows a $\text{Binomial}(n-1, p(X^{(i)})) +1$ distribution.
	Let $X$ be a random variable generated from $p$ and independent of $\mathbf X_n$ and $m_X$ be distributed as $\text{binomial}(n-1, p(X))$ given $X$.
	Using Proposition~\ref{prop:mathind} and the law of total expectation,
	\begin{align}
		&\E\brac{\hat{H}\paren{\mathbf X_n}}
		= \E\brac{\frac{1}{n} \sum_{i=1}^n \brac{ J(n-1) -J\paren{m^{(i)}-1} } } 
		= \E\brac{ J(n-1) -J\paren{m^{(1)}-1} } \\
		&= \E\brac{ \E\brac{ J(n-1) -J(m_X) \middle| X}} 
		= -\E\brac{ \log p(X) +\sum_{k=n}^\infty \frac{(1-p_X)^k}{k}} \\
		&= H(X) -\sum_{k=n}^\infty \frac{\E\brac{ (1-p_X)^k }}{k}~.
	\end{align}
	Now, if $p_j\lesssim j^{-1/\alpha}$, it is straightforward to show that $H(X) <\infty$.
	Using Proposition~\ref{lem:missingmass} with $m=1$, $\E\brac{(1-p_X)^k} \leq C/k^{1-\alpha}$ for $k\in\N$ where $C>0$ is not a function of $k$.
	Then, because $\sum_{k=n}^\infty f(k) \leq \int_n^\infty f(x)\,dx +f(n)$ for decreasing $f$, we have
	\begin{equation}
		\sum_{k=n}^\infty \frac{\E\brac{ (1-p_X)^k }}{k}
		\leq \sum_{k=n}^\infty \frac{C}{k^{2-\alpha}}
		\leq \int_{n}^\infty \frac{C}{x^{2-\alpha}} \,dx +\frac{C}{n^{2-\alpha}}
		\leq\frac{C_1}{n^{1-\alpha}}~.
	\end{equation}
	Because $1/n^{1-\alpha}$ is the leading term, there is a constant $C_1>0$, that does not depend on $n$ such that the last inequality holds.
	Last, if $p_j =o(j^{-2})$, we use the second, corresponding claim of Proposition \ref{lem:missingmass} then apply the same logic as above, yielding a bound of $o(1/\sqrt n)$.
\end{proof}

\begin{prop}\label{lem:missingmass}
	If $p_j \lesssim j^{-1/\alpha}$ for $\alpha \in [0,1)$, and $m\in\N$, then there is a constant $C=C_{m,\alpha}(p)>0$ such that
	\begin{equation}
		\sum_j p_j^m (1-p_j)^k \leq \frac{C}{ k^{m-\alpha} }
	\end{equation}
	holds uniformly over $k\in\N$ where $C$ does not depend on $k$.
	Similarly, if $p_j =o(j^{-1/\alpha})$, then $\sum_j p_j^m (1-p_j)^k = o\paren{ k^{-(m-\alpha)} }$ as $k\rightarrow \infty$.
\end{prop}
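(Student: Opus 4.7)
The plan is to pass from the binomial tail $(1-p_j)^k$ to its exponential majorant $e^{-k p_j}$ and then split the index set at the point where $p_j \approx 1/k$, which makes the head and tail contributions balance at order $k^{-(m-\alpha)}$. Using $(1-p)^k \leq e^{-kp}$, it suffices to bound $\sum_j p_j^m e^{-k p_j}$. Let $C_0 := \sup_j j^{1/\alpha} p_j$, finite by hypothesis, and take the cutoff $N := \ceil{(C_0 k)^\alpha}$, so that $p_j \leq C_0 j^{-1/\alpha} \leq 1/k$ for every $j > N$.

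For the head $j \leq N$, I would rewrite $p_j^m e^{-k p_j} = k^{-m}(k p_j)^m e^{-k p_j}$ and apply the uniform bound $\sup_{y \geq 0} y^m e^{-y} = m^m e^{-m}$; summing over the $N$ head indices gives at most $m^m e^{-m}\, N\, k^{-m} \lesssim k^{\alpha - m}$. For the tail $j > N$, use $e^{-k p_j} \leq 1$ and $p_j^m \leq C_0^m j^{-m/\alpha}$. Because $m \geq 1$ and $\alpha \in [0,1)$ force $m/\alpha > 1$, an integral comparison gives $\sum_{j > N} j^{-m/\alpha} \lesssim N^{1 - m/\alpha}$, and with $N \asymp k^\alpha$ this is again $\lesssim k^{\alpha - m}$. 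Summing the two contributions yields $\sum_j p_j^m (1-p_j)^k \leq C\, k^{-(m-\alpha)}$ for all $k \in \N$ (small $k$ are absorbed into the constant via the trivial bound $\sum_j p_j^m \leq 1$). Tracking the dependence on $C_0$ through both halves shows the implicit constant scales as $C_0^\alpha$, a property that drives the refinement.

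For the $o(\cdot)$ statement, for any $\epsilon > 0$ choose a finite $J_\epsilon$ such that $p_j \leq \epsilon\, j^{-1/\alpha}$ for all $j > J_\epsilon$. Split the sum at $J_\epsilon$. The head has finitely many terms, and because each factor $(1-p_j)^k \leq (1-p_*)^k$ with $p_* := p_{J_\epsilon} > 0$ decays geometrically, its contribution is exponentially small in $k$ and hence $o(k^{-(m-\alpha)})$. The tail is controlled by rerunning the first part on $\{j > J_\epsilon\}$ with $C_0$ replaced by $\epsilon$, yielding at most $C\, \epsilon^\alpha\, k^{-(m-\alpha)}$. Multiplying by $k^{m-\alpha}$ and sending $\epsilon \to 0$ proves the claim. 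The convention $\alpha = 0$ is handled by the same split (the tail becomes a fast-decaying series) or by applying the polynomial case with an arbitrarily small positive $\alpha$.

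The main obstacle is identifying the correct cutoff. The scaling $N \asymp k^\alpha$ corresponds to the index where $p_j$ crosses $1/k$, equivalently where the rescaled summand $y^m e^{-y}$ (with $y = k p_j$) transitions from its maximum near $y = m$ into its small-$y$ polynomial tail; any mismatched choice leaves one half dominating the other and spoils the sharp rate. Once this split is fixed, the remaining pieces, namely $(1-p)^k \leq e^{-kp}$, the uniform maximum $m^m e^{-m}$ of $y^m e^{-y}$, and the integral bound for $\sum j^{-m/\alpha}$ under $m/\alpha > 1$, are routine.
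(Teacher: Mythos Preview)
Your argument is correct and reaches the same bound, but the route is genuinely different from the paper's. The paper introduces the counting function $G(x) := \abs{\{j : p_j > x\}}$, bounds it by $M^\alpha x^{-\alpha}$, and then converts $\sum_j p_j^m e^{-kp_j}$ into a Riemann--Stieltjes integral $\int x^m e^{-kx}\,d(-G(x))$; an integration by parts against the partial-sum bound $\sum_{p_j<u} p_j^m \lesssim u^{m-\alpha}$ and a rescaling $x \mapsto x/k$ produce the factor $\Gamma(m-\alpha+1)/k^{m-\alpha}$. Your approach instead splits the index set at the crossover $N \asymp k^\alpha$ where $p_j \approx 1/k$, bounds the head by the uniform maximum of $y^m e^{-y}$ times $N$, and bounds the tail by the elementary series $\sum_{j>N} j^{-m/\alpha}$. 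Both hinge on the same scaling intuition, but yours is more elementary (no Stieltjes integration, no Gamma function) and makes the dependence of the constant on $C_0 = \sup_j j^{1/\alpha} p_j$ explicit as $C_0^\alpha$, which is exactly what you need to upgrade to the $o(\cdot)$ statement by replacing $C_0$ with $\epsilon$ on the tail past $J_\epsilon$. The paper's integral form, on the other hand, yields a slightly sharper implicit constant and transfers more mechanically to the $o(\cdot)$ case via $G(x) = o(x^{-\alpha})$. One minor point: your handling of $\alpha = 0$ by ``applying the polynomial case with arbitrarily small positive $\alpha$'' only gives $\lesssim k^{-(m-\alpha')}$ for each $\alpha'>0$ with an $\alpha'$-dependent constant, not a single $C/k^m$; the direct split (finite head at any cutoff, super-polynomially small tail) is the cleaner way to close that case.
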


\begin{proof}[Proof of Proposition~\ref{lem:missingmass}]
	Let $G(x) := \abs{\curly{ j\in\N: p_j>x}}$ be the number of probability mass points of $p$ greater than $x$.
	Because $p_j \lesssim j^{-1/\alpha}$, there is some $J\in\N$ such that for all $j\geq J$, $p_j\leq Mj^{-1/\alpha}$.
	Because $p_j$ is decreasing, if $x\leq p_J$, then
	\begin{equation}
		G(x) := \abs{\curly{ j\in\N: p_j>x}} \leq \abs{\curly{ j\in\N : Mj^{-1/\alpha} >x}} \leq M^\alpha x^{-\alpha}~.
	\end{equation}
		Note that if $\alpha=0$, then $G(x) \leq 1$ holds vacuously.
	Let $u\leq p_J$.
	Converting the sum into a Riemann-Stieltjes integral, then applying integration by parts yields
	\begin{align}
		\sum_{j: p_j<u} p_j^m
		&= \int_0^u x^m\,d\paren{-G(x)} = \brac{-x^m G(x)}_0^u +m \int_0^u x^{m-1} G(x)\,dx \\
		&\leq M^\alpha m\int_0^u x^{m-1-\alpha} \,dx 
		= \frac{M^\alpha m u^{m-\alpha} }{m-\alpha}~.
	\end{align}
	The inequality follows because of the bound on $G(x)$, $-u^mG(u)\leq 0$ and $x^mG(x)\rightarrow 0$ as $x\rightarrow 0$.

	Again, converting the summation to a Riemann-Stieltjes integral, and applying integration by parts and the inequality established above,
	\begin{align}
		&\sum_j p_j^m (1-p_j)^k \leq \sum_j p_j^m e^{-kp_j}
		= \int_0^\infty x^m e^{-kx} \,d\paren{-G(x)} \label{line:sumint} \\
		&= \brac{ \paren{\sum_{j:p_j<x} p_j^m} e^{-kx} }_0^\infty +\int_0^\infty \paren{\sum_{j:p_j<x} p_j^m} ke^{-kx} \,dx \label{line:intparts} \\
		&= \int_0^{p_J} \paren{\sum_{j:p_j<x} p_j^m} ke^{-kx} \,dx +\int_{p_J}^\infty \paren{\sum_{j:p_j<x} p_j^m} ke^{-kx} \,dx \label{line:intsplit} \\
		&\leq \frac{M^\alpha mk}{m-\alpha} \int_0^{p_J} x^{m-\alpha} e^{-kx} \,dx +\int_{p_J}^\infty ke^{-kx} \,dx \label{line:sumbound} \\
		&\leq \frac{ M^\alpha m }{(m-\alpha) k^{m-\alpha} } \int_0^\infty x^{m-\alpha} e^{-x} \,dx +e^{-kp_J} \label{line:intsubs} \\
		%&= \frac{ M^\alpha m \Gamma(m-\alpha+1) }{(m-\alpha) k^{m-\alpha} } +e^{-kp_J} \\
		 &= \frac{ M^\alpha m \Gamma(m-\alpha) }{ k^{m-\alpha} } +e^{-kp_J} \leq \frac{C_{m,\alpha}(p)}{k^{m-\alpha}}~.
	\end{align}
	Line~\ref{line:sumint} uses the inequality $(1-x)^n \leq e^{-nx}$ for $x\in [0,1]$ and $n\geq 0$, then converts the summation to an integral.
	Line~\ref{line:intparts} applies integration by parts where the first term converges to zero then line~\ref{line:intsplit} uses an integral property on the second term. 
	Line~\ref{line:sumbound} bounds the first summation using the inequality above and bounds the second summation by one.
	The first term of line~\ref{line:intsubs} extends the upper limit of integration to $\infty$ then uses substitution to simplify the integral to $\Gamma(m-\alpha+1)$; the second term evaluates its corresponding integral.
	The last equality uses the property $\Gamma(x+1) = x\Gamma(x)$.
	Finally, the $1/k^{m-\alpha}$ term dominates for large $k$; choosing $C_{m,\alpha}(p)$ sufficiently large to hold for small $k$ yields the final inequality.

	If $p_j =o(j^{-1/\alpha})$, then $G(x) =o(x^{-\alpha})$ as $x\rightarrow 0$.
	The remainder of the proof follows the same logic as above.
\end{proof}

\begin{prop} \label{prop:tightbias}
	The bias upper bound from Proposition \ref{bias} is tight for $\alpha\in (0,1)$.
\end{prop}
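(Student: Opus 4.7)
The plan is to exhibit a specific distribution that saturates the $n^{-(1-\alpha)}$ bias rate of Proposition~\ref{bias}. The natural candidate is the zeta distribution $p_j = j^{-1/\alpha}/\zeta(1/\alpha)$, which is well-defined for $\alpha\in(0,1)$ because $1/\alpha>1$, and which satisfies $p_j\asymp j^{-1/\alpha}$. For this distribution $H(X)<\infty$, so the exact bias identity from Proposition~\ref{bias} applies:
\begin{equation}
\E[\hat H(\mathbf X_n)] - H(X) = -\sum_{k=n}^\infty \frac{\E[(1-p_X)^k]}{k}~.
\end{equation}
Because every summand is nonnegative, establishing tightness reduces to proving a lower bound on the series. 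The crux is a matching lower bound $\E[(1-p_X)^k] \gtrsim k^{-(1-\alpha)}$, complementary to the upper bound in Proposition~\ref{lem:missingmass} with $m=1$.

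To produce this lower bound, I would restrict the summation $\sum_j p_j(1-p_j)^k$ to the dyadic window $j\in[k^\alpha, 2k^\alpha]$. On this window $p_j \asymp 1/k$, so $kp_j$ lies in a fixed compact subinterval of $(0,\infty)$ and $kp_j^2 = \mathcal O(1/k)$. Using the elementary inequality $\log(1-x) \geq -x-x^2$ for $x\in[0,1/2]$, we get $(1-p_j)^k \geq e^{-kp_j-kp_j^2}$, which is bounded below by a positive constant uniformly over $j$ in the window. Meanwhile, the mass of the window is $\sum_{j\in[k^\alpha,2k^\alpha]} p_j \asymp k^\alpha \cdot k^{-1} = k^{-(1-\alpha)}$, so combining gives $\E[(1-p_X)^k] \gtrsim k^{-(1-\alpha)}$.

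Summing over $k\geq n$ and using that $2-\alpha>1$ for $\alpha\in(0,1)$ yields
\begin{equation}
\bigl|\E[\hat H(\mathbf X_n)] - H(X)\bigr| \;=\; \sum_{k=n}^\infty \frac{\E[(1-p_X)^k]}{k} \;\gtrsim\; \sum_{k=n}^\infty \frac{1}{k^{2-\alpha}} \;\asymp\; \frac{1}{n^{1-\alpha}}~,
\end{equation}
matching the upper bound in Proposition~\ref{bias} and proving tightness.

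The main obstacle is the dyadic window lemma: one must ensure the two-sided control $p_j \asymp 1/k$ and the resulting $(1-p_j)^k \gtrsim 1$ hold \emph{uniformly} over the window and over all sufficiently large $k$. Once that is pinned down, the rest is routine integral comparison, essentially mirroring the bookkeeping already performed in Proposition~\ref{lem:missingmass}. The case $\alpha=0$ is excluded since the zeta construction degenerates (no such distribution exists) and because exponentially-tailed distributions genuinely admit faster bias rates, consistent with the statement restricting tightness to $\alpha\in(0,1)$.
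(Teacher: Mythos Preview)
Your proposal is correct and follows the same overall architecture as the paper: both take the zeta distribution $p_j = j^{-1/\alpha}/\zeta(1/\alpha)$, reduce tightness to the lower bound $\E[(1-p_X)^k]\gtrsim k^{-(1-\alpha)}$, and then sum over $k\geq n$ via integral comparison. The only substantive difference is how that key lower bound is obtained. The paper converts $\sum_j p_j(1-p_j)^k$ to an integral in $j$, applies the substitution $u=Cx^{-1/\alpha}$ followed by $u=1-e^{-t}$, and uses $1-e^{-t}\leq t$ to reduce the expression to an incomplete Gamma integral that is $\asymp (k+1)^{-(1-\alpha)}$. You instead restrict the sum to the dyadic window $j\in[k^\alpha,2k^\alpha]$, where $kp_j$ sits in a fixed compact interval so that $(1-p_j)^k$ is bounded below by a constant, and read off the window mass $\asymp k^{-(1-\alpha)}$. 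Your route is more elementary and arguably more transparent about \emph{which} indices carry the bias; the paper's route tracks constants more explicitly through the Gamma function. Either works, and the final summation step is identical.
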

\begin{proof}
	Let $p_j = C j^{-1/\alpha}$ be the zeta distribution where $C=1/\zeta(1/\alpha)$ with $\alpha\in (0,1)$.
	We begin by bounding the expectation below.
	Using substitution first with $u=Cx^{-1/\alpha}$, then $u=1-e^{-t}$.
	We bound the resulting integral below using $1-e^{-t}\leq t$, then substitution again.
	\begin{align}
		&\E\brac{(1-p(X))^k} 
		= \sum_{j=1}^\infty C j^{-1/\alpha} \paren{ 1-Cj^{-1/\alpha} }^k
		\asymp \int_1^\infty C x^{-1/\alpha} \paren{ 1-Cx^{-1/\alpha} }^k \,dx \\
		&= \alpha C^\alpha \int_0^C u^{-\alpha} (1-u)^k \,du 
		= \alpha C^\alpha \int_0^{-\log(1-C)} \paren{1-e^{-t}}^{-\alpha} e^{-(k+1)t} \,dt \\
		&\geq \alpha C^\alpha \int_0^{-\log(1-C)} t^{-\alpha} e^{-(k+1)t} \,dt 
		= \frac{\alpha C^\alpha}{(k+1)^{1-\alpha}} \int_0^{-(k+1)\log(1-C)} v^{-\alpha} e^{-v} \,dv \\
		&\geq \frac{C}{(k+1)^{1-\alpha}} ~.
	\end{align}
	The final integral approaches $\Gamma(1-\alpha)$ as $k$ increases.
	We allow all constants to be absorbed by $C$.
	Using the calculation above, we bound the estimator's bias below.
	\begin{align}
		&\sum_{k=n}^\infty \frac{ \E\brac{ (1-p(X))^k } }{k} 
		\geq \sum_{k=n+1}^\infty \frac{C}{k^{2-\alpha}}
		\geq \int_{n+1}^\infty \frac{C}{x^{2-\alpha}} \,dx
		\asymp \frac{1}{n^{1-\alpha}}~.
	\end{align}
	Because this lower bound matches the upper bound from Proposition \ref{bias}, the estimator's bias bound is tight.
\end{proof}

\begin{prop}\label{propHarmonicBinomial}
	If $n\in\N$ and $p,q\in\R$, then
	\begin{equation}
		\sum_{k=0}^n J(k) \binom{n}{k} p^kq^{n-k}= \sum_{k=1}^n \frac{ (p+q)^n - q^k (p+q)^{n-k} }{k}~.
	\end{equation}
\end{prop}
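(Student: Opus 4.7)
The plan is to prove this by induction on $n$, generalizing the argument used for Proposition \ref{prop:mathind} (which is precisely this identity restricted to $p+q=1$). Write $S_n(p,q) := \sum_{k=0}^n J(k)\binom{n}{k} p^k q^{n-k}$ for the left-hand side. The base case $n=1$ gives $S_1 = p$, which matches the right-hand side $(p+q)-q = p$.

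For the inductive step, I would apply Pascal's rule $\binom{n+1}{k} = \binom{n}{k} + \binom{n}{k-1}$, split $S_{n+1}$ into two sums, re-index the second by $j = k-1$, and apply $J(j+1) = J(j) + \tfrac{1}{j+1}$. This reproduces the decomposition used in Proposition \ref{prop:mathind}, but with the factors of $p$ and $1-p$ replaced by $p$ and $q$. After collecting terms I expect to obtain the two-parameter recurrence
\begin{equation}
S_{n+1} = (p+q)\,S_n + \frac{(p+q)^{n+1} - q^{n+1}}{n+1},
\end{equation}
where the second term comes from the absorption identity $\tfrac{1}{j+1}\binom{n}{j} = \tfrac{1}{n+1}\binom{n+1}{j+1}$ followed by the binomial theorem applied to the sum $\sum_{k=1}^{n+1}\binom{n+1}{k} p^k q^{n+1-k} = (p+q)^{n+1} - q^{n+1}$. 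Substituting the inductive hypothesis into $(p+q)S_n$ produces the partial sum $\sum_{k=1}^n \frac{(p+q)^{n+1} - q^k (p+q)^{n+1-k}}{k}$, and the residual term above supplies exactly the missing $k = n+1$ summand.

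The main obstacle is purely bookkeeping --- matching the re-indexed boundary terms --- since no new analytic input is required beyond what appears in Proposition \ref{prop:mathind}. As a useful sanity check one could bypass induction entirely: both sides are homogeneous polynomials of degree $n$ in $(p,q)$, so the already-established identity on the line $p+q=1$ (after rescaling by $(p+q)^n$ to apply Proposition \ref{prop:mathind} with success probability $p/(p+q)$) determines both sides on all of $\R^2$ by homogeneity. I would nevertheless present the direct induction, since it is self-contained and makes the algebraic role of the parameters transparent without any probabilistic interpretation of $p$ and $q$.
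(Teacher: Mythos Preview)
Your proposal is correct and follows essentially the same approach as the paper: both set up $S_n$, derive the recurrence $S_{n+1} = (p+q)S_n + \tfrac{(p+q)^{n+1}-q^{n+1}}{n+1}$ via Pascal's rule, the identity $J(j+1)=J(j)+\tfrac{1}{j+1}$, and the binomial theorem, and then finish by induction from $S_1 = p$. The only cosmetic difference is that the paper normalizes by $(p+q)^{n+1}$ to make the telescoping explicit, whereas you substitute the inductive hypothesis directly; your added homogeneity remark is a nice alternative not in the paper.
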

\begin{proof}
	Let $S_n = \sum_{k=0}^n J(k) \binom{n}{k} p^k q^{n-k}$.
	Using the identity that $\binom{n+1}{k} = \binom{n}{k}+\binom{n}{k-1}$ and the binomial theorem,
	\begin{align}
		S_{n+1}
	&= \sum_{k=0}^{n+1} J(k) \binom{n+1}{k} p^k q^{n+1-k} \\
	&= q\sum_{k=0}^{n} J(k) \binom{n}{k} p^k q^{n-k} +p\sum_{k=0}^{n} J(k) \binom{n}{k} p^k q^{n-k} +\sum_{k=1}^{n+1} \frac{1}{k} \binom{n}{k-1} p^kq^{n+1-k} \\
	&= (p+q) S_n +\frac{(p+q)^{n+1} -q^{n+1}}{n+1}~.
	\end{align}
	Dividing by $(p+q)^{n+1}$, we have
	\begin{equation}
		\frac{S_{n+1}}{(p+q)^{n+1}}= \frac{S_n}{(p+q)^n} +\frac{1-\paren{\frac{q}{p+q}}^{n+1}}{n+1}~.
	\end{equation}
	Because $S_1=p$, we use induction to conclude that
	\begin{equation}
		S_n = (p+q)^n \sum_{k=1}^n \frac{1-\paren{\frac{q}{p+q}}^k}{k}~.
	\end{equation}
\end{proof}

\begin{prop}\label{propReciprocalBinomial}
	If $n\in\N$ and $p,q\in\R$, then
	\begin{equation}
		\sum_{k=1}^n \frac{1}{k} \binom{n}{k} p^kq^{n-k} = \sum_{k=1}^n \frac{(p+q)^k q^{n-k} -q^n}{k}~.
	\end{equation}
\end{prop}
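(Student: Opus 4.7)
The plan is to mirror the inductive strategy that established Proposition \ref{propHarmonicBinomial}. Let $T_n := \sum_{k=1}^n \frac{1}{k}\binom{n}{k} p^k q^{n-k}$ denote the left-hand side. First I would verify the base case $n=1$, where both sides evaluate to $p$. For the inductive step, I would apply Pascal's identity $\binom{n+1}{k} = \binom{n}{k} + \binom{n}{k-1}$ to split $T_{n+1}$ into a $qT_n$ piece and a residual sum $\sum_{k=1}^{n+1} \frac{1}{k}\binom{n}{k-1} p^k q^{n+1-k}$.

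The key algebraic observation, rather than a grinding calculation, is the factorial identity $\frac{1}{k}\binom{n}{k-1} = \frac{1}{n+1}\binom{n+1}{k}$. Substituting this into the residual sum and applying the binomial theorem collapses it to $\frac{(p+q)^{n+1} - q^{n+1}}{n+1}$, yielding the recurrence
\begin{equation*}
T_{n+1} = qT_n + \frac{(p+q)^{n+1} - q^{n+1}}{n+1}.
\end{equation*}
Plugging the inductive hypothesis for $T_n$ into this recurrence multiplies every term of the claimed closed form by $q$, shifting the exponent $q^{n-k}$ to $q^{n+1-k}$; adding the fresh $k=n+1$ term supplied by the recurrence produces precisely $\sum_{k=1}^{n+1} \frac{(p+q)^k q^{n+1-k} - q^{n+1}}{k}$, closing the induction.

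The main obstacle is minor and essentially bookkeeping: spotting the factorial identity that converts $\frac{1}{k}\binom{n}{k-1}$ into $\frac{1}{n+1}\binom{n+1}{k}$, and then verifying the algebraic regrouping after substitution matches the target form exactly. As a sanity check, one can also prove the identity in a single stroke by noting that both sides equal $\int_0^p \frac{(x+q)^n - q^n}{x}\,dx$: the left-hand side arises by expanding $(x+q)^n - q^n$ binomially and integrating term by term, while the right-hand side arises from the substitution $u = x+q$ followed by the geometric expansion $(u^n - q^n)/(u-q) = \sum_{j=0}^{n-1} u^j q^{n-1-j}$. This analytic route avoids induction, but the inductive proof is more in keeping with the style of Proposition \ref{propHarmonicBinomial}.
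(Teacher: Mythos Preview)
Your inductive proof is correct and essentially identical to the paper's: both derive the recurrence $T_{n+1}=qT_n+\frac{(p+q)^{n+1}-q^{n+1}}{n+1}$ via Pascal's rule and the identity $\frac{1}{k}\binom{n}{k-1}=\frac{1}{n+1}\binom{n+1}{k}$, then close by induction from $T_1=p$ (the paper normalizes by $q^{n+1}$ before telescoping, whereas you substitute the hypothesis directly, a purely cosmetic difference). Your integral sanity check via $\int_0^p \frac{(x+q)^n-q^n}{x}\,dx$ is a valid one-line alternative not present in the paper; since the integrand is a polynomial in $x$, no restriction on $p,q\in\R$ is needed.
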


\begin{proof}
	Let $S_n =\sum_{k=1}^n \frac{1}{k} \binom{n}{k} p^kq^{n-k}$.
	Using the identity that $\binom{n+1}{k} = \binom{n}{k}+\binom{n}{k-1}$ and the binomial theorem,
	\begin{align}
		S_{n+1}
	&=\sum_{k=1}^{n+1} \frac{1}{k} \binom{n+1}{k} p^kq^{n+1-k} \\
	&=\sum_{k=1}^{n+1} \frac{1}{k} \binom{n}{k} p^kq^{n+1-k} +\sum_{k=1}^{n+1} \frac{1}{k} \binom{n}{k-1} p^kq^{n+1-k} \\
	&=\sum_{k=1}^{n} \frac{1}{k} \binom{n}{k} p^kq^{n+1-k} +\frac{1}{n+1}\sum_{k=1}^{n+1} \binom{n+1}{k} p^kq^{n+1-k} \\
	&= qS_n +\frac{(p+q)^{n+1}-q^{n+1}}{n+1}~.
	\end{align}
	Dividing by $q^{n+1}$, we have
	\begin{align}
		\frac{S_{n+1}}{q^{n+1}}
	&= \frac{S_n}{q^n} +\frac{(p+q)^{n+1} q^{-(n+1)} -1 }{n+1}~.
	\end{align}
	Because $S_1=p$, using induction, we conclude that
	\begin{align}
		S_n = \sum_{k=1}^n \frac{(p+q)^k q^{n-k} -q^n}{k}~.
	\end{align}
\end{proof}

\begin{prop} \label{prop:sumtoint}
	If $p,q\in (0,1)$ and $p+q<1$, then
	\begin{align}
		&\sum_{m=1}^\infty \sum_{k=m+1}^\infty \frac{ (1-p)^m \paren{1-\frac{q}{1-p}}^k + (1-q)^m \paren{1-\frac{p}{1-q}}^k }{mk} \\
		&= \log(p) \log(q) -\log(q)\log(1-q) -\log(p)\log(1-p) \\
		&\quad +\zeta(2) -\li2(1-p) -\li2(1-q) 
	\end{align}
	where $\zeta$ is the zeta function and $\li2$ is the dilogarithm.
\end{prop}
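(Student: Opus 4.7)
The plan is to reduce this double sum to a one-variable integration problem by differentiating in $q$, then finish using classical dilogarithm functional equations. By the $p\leftrightarrow q$ symmetry of the two summands, it suffices to evaluate the first one, call it $T_1(p,q)$; the second equals $T_1(q,p)$. Writing $a=1-p$ and $c=1-p-q$, the hypothesis $p+q<1$ forces $|c/a|<1$, so the double series converges absolutely and term-by-term differentiation in $q$ is legitimate. Only $(c/a)^k$ depends on $q$; the resulting inner $k$-sum is geometric and collapses to give
\[
\partial_q T_1 \;=\; -\frac{1}{a}\cdot\frac{1}{1-c/a}\sum_{m=1}^\infty \frac{c^m}{m} \;=\; \frac{\log(p+q)}{q}.
\]
This collapse is the heart of the argument: a double sum in $(m,k)$ becomes an elementary function of $q$.

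A direct differentiation check using $\li2'(z) = -\log(1-z)/z$ shows that $\int \log(p+q)/q\, dq = \li2(-p/q) + \tfrac{1}{2}\log^2 q + C(p)$, so $T_1 = \li2(-p/q) + \tfrac{1}{2}\log^2 q + C(p)$ for some function $C$ of $p$ alone. To pin down $C$, the plan is to use the boundary condition $T_1|_{q=1-p}=0$, which holds because $c\to 0$ kills every summand. Landen's identity $\li2(z/(z-1)) = -\li2(z) - \tfrac{1}{2}\log^2(1-z)$ evaluated at $z=p$ gives $\li2(-p/(1-p)) = -\li2(p) - \tfrac{1}{2}\log^2(1-p)$, and plugging into the boundary condition yields $C(p)=\li2(p)$. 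By symmetry, $T_2(p,q) = \li2(-q/p) + \tfrac{1}{2}\log^2 p + \li2(q)$.

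To close, I would apply the dilogarithm inversion formula $\li2(-w) + \li2(-1/w) = -\zeta(2) - \tfrac{1}{2}\log^2 w$ at $w=p/q>0$. After expanding $\log^2(p/q) = \log^2 p - 2\log p\log q + \log^2 q$, the $\tfrac{1}{2}\log^2 p$ and $\tfrac{1}{2}\log^2 q$ contributions in $T_1+T_2$ cancel exactly, leaving
\[
T_1 + T_2 \;=\; \log p\log q + \li2(p) + \li2(q) - \zeta(2).
\]
A final application of Euler's reflection $\li2(p)+\li2(1-p)=\zeta(2)-\log p\log(1-p)$ (and its $q$ analogue) rewrites this in the form stated in the proposition.

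The main obstacle is not any single deep step but the careful orchestration of three classical dilogarithm functional equations (Landen, inversion, and Euler's reflection) together with the correct evaluation of the integration constant $C(p)$; any sign mistake in these identities propagates through the final cancellation. Interchange of derivative and sum is routine on compact subsets of $\{(p,q):\, p,q>0,\, p+q<1\}$ because the double series converges absolutely there.
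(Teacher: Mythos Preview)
Your argument is correct. The differentiation in $q$ legitimately collapses the double sum to $\log(p+q)/q$, the antiderivative $\li2(-p/q)+\tfrac12\log^2 q$ is valid for all $q\in(0,1-p)$ since $-p/q$ stays on the negative real axis away from the branch cut, the boundary evaluation at $q\to 1-p$ is justified by dominated convergence, and the three functional equations (Landen, inversion, Euler reflection) are applied with correct signs.

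The paper takes a different route. Rather than differentiating, it writes each summand as an integral in an auxiliary variable $z$ (and then a second variable $y$), swaps sum and integral, and collapses via geometric series to obtain
\[
\int_{1-q}^{p}\int_{1-z}^{q}\frac{dy\,dz}{y(y+z)}
\]
for the first term. After swapping the order of integration in the symmetric second term, the two integrands add to $1/(yz)$, and the resulting double integral evaluates directly using only Euler's reflection identity. The paper's approach thus needs just one dilogarithm identity instead of three, and its key insight is the cancellation $1/(y(y+z))+1/(z(y+z))=1/(yz)$ that makes the combined integral elementary. Your approach trades that integral-representation trick for a heavier reliance on the classical dilogarithm toolbox; it is arguably more mechanical once one knows those identities, while the paper's is more self-contained but requires spotting the integral form.
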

\begin{proof}
	Because of symmetry in $p$ and $q$, we will work through one instance and apply to the other.
	Begin by representing the left summand expression as an integral:
	\begin{align}
		&\sum_{m=1}^\infty \sum_{k=m+1}^\infty \frac{ (1-p)^m \paren{1-\frac{q}{1-p}}^k }{mk} \\
		&= -\sum_{m=1}^\infty \sum_{k=m+1}^\infty \int_{1-q}^p \brac{  \frac{q (1-z)^{m-2} \paren{1-\frac{q}{1-z}}^{k-1} }{m} +\frac{ (1-z)^{m-1} \paren{1-\frac{q}{1-z}}^k }{k} } \,dz \\
		&= -\int_{1-q}^p \sum_{m=1}^\infty \brac{  \frac{ (1-z)^{m-1} \paren{1-\frac{q}{1-z}}^m }{m} +\sum_{k=m+1}^\infty \frac{ (1-z)^{m-1} \paren{1-\frac{q}{1-z}}^k }{k} } \,dz \label{line:geomSeries} \\
		&= -\int_{1-q}^p \sum_{m=1}^\infty \sum_{k=m}^\infty \frac{ (1-z)^{m-1} \paren{1-\frac{q}{1-z}}^k }{k}  \,dz \\
		&= \int_{1-q}^p \sum_{m=1}^\infty \sum_{k=m}^\infty \int_{1-z}^q  (1-z)^{m-2} \paren{1-\frac{y}{1-z}}^{k-1} \,dy \,dz \label{line:intrep2} \\
		&= \int_{1-q}^p \int_{1-z}^q  \sum_{m=1}^\infty \frac{ (1-z)^{m-1} \paren{1-\frac{y}{1-z}}^{m-1} }{y} \,dy \,dz \label{line:geomSeries2} \\
		%&= \int_{1-q}^p \int_{1-z}^q  \sum_{m=1}^\infty \frac{ (1-y-z)^{m-1} }{y} \,dy \,dz 
		&= \int_{1-q}^p \int_{1-z}^q  \frac{ 1 }{y(y+z)} \,dy \,dz \label{line:geomSeries3}~.
	\end{align}
	Line~\ref{line:geomSeries} uses Fubini's theorem (and absolute convergence for $p,q\in (0,1)$), then distributes the inner summation to each of the terms within the brackets and evaluates the sum on the left side term using the geometric series.
	Arrive at the following line by recognizing that the left expression is the summand value corresponding to $k=m$ in the inner summation.
	Line~\ref{line:intrep2} writes the summand from the prior line as a integral.
	Line~\ref{line:geomSeries2} uses Fubini's theorem, then the geometric series.
	Line~\ref{line:geomSeries3} uses the exponent power of a product rule then the geometric series identity.

	Repeat the process above for the other symmetric expression and sum both expressions.
	We switch the order of integration for the second integral on the line below, then sum the integrands and evaluate:
	\begin{align}
		& \int_{1-q}^p \int_{1-z}^q  \frac{ 1}{y(y+z)} \,dy \,dz +\int_{1-p}^q \int_{1-y}^p  \frac{ 1 }{z(y+z)} \,dz \,dy \\ 
		&= \int_{1-q}^p \int_{1-z}^q  \frac{ 1}{y(y+z)} \,dy \,dz +\int_{1-q}^p \int_{1-z}^q \frac{ 1}{z(y+z)} \,dy \,dz \\
		&= \int_{1-q}^p \int_{1-z}^q  \frac{1}{yz} \,dy \,dz = \int_{1-q}^p \frac{ \log(q) -\log(1-z) }{z} \,dz \\
		&= \log(p) \log(q) -\log(q)\log(1-q) +\li2(p) -\li2(1-q) \\
		&= \log(p) \log(q) -\log(q)\log(1-q) -\log(p)\log(1-p) \\
		&\quad +\zeta(2) -\li2(1-p) -\li2(1-q)~.
	\end{align}
	The last line uses the dilogarithm reflection identity:
	\begin{equation}
		\li2(z) +\li2(1-z)= \zeta(2) -\log(z) \log(1-z) 
	\end{equation}
	where $\li2(z) := \sum_{m=1}^\infty \frac{z^m}{m^2}$ is the dilogarithm function and $\zeta(z) := \sum_{m=1}^\infty m^{-z}$ is the zeta function.
\end{proof}

\section{Variance Proofs}\label{apdx:var}

\begin{proof}[Proof of Theorem~\ref{var}]
	First, $p_j \lesssim j^{-1/\alpha}$ for $\alpha \in [0,1)$ implies $\var[\log p(X)]< \infty$.
This is easy to show using the integral test.

	Because the sample is i.i.d.\, one can decompose the estimator variance as
	\begin{align}
		&\var\brac{ \hat{H}(\mathbf X_n) } =\var\brac{ \frac{1}{n} \sum_{i=1}^n \brac{ J(n-1) -J\paren{m^{(i)} -1} } } \\
		&= \frac{1}{n} \var\brac{ J(n-1) -J\paren{m^{(1)}-1} } \\
		&\quad +\frac{n-1}{n} \cov\brac{ J(n-1)-J\paren{m^{(1)}-1}, J(n-1)-J\paren{m^{(2)}-1} }~. \label{line:vardecomp}
	\end{align}

	Beginning with the local variance term, let $X=X^{(1)}$ and $m_X$ be $\text{Binomial}(n-1, p(X))$.
	Then $m^{(1)}$ is equal to $m_X+1$ in distribution, so that
	\begin{align}
		&\var\brac{ J(n-1)-J\paren{m^{(1)}-1} }= \var\brac{ J(n-1)-J(m_X) } \\
		&= \E\brac{ (J(n-1)-J(m_X))^2 } -\paren{ \E\brac{J(n-1)-J(m_X)} }^2 \label{line:locvardecomp} \\
		&= \E\brac{ \paren{\log p(X)}^2 } -\sum_{m=n}^\infty \sum_{k=1}^{m-1} \frac{ \E\brac{(1-p_X)^m} }{k(m-k)} +\sum_{m=1}^{n-1} \sum_{k=n-m}^{n-1} \frac{ \E\brac{(1-p_X)^m}}{mk} \label{line:cauchyProd} \\
		&\quad -\paren{\E[\log p(X)]}^2 -2\E\brac{\log p(X)}\sum_{m=n}^\infty \frac{ \E\brac{(1-p_X)^m}  }{m} -\paren{ \sum_{m=n}^\infty \frac{ \E\brac{(1-p_X)^m} }{m} }^2 \label{line:logsq} \\
		&= \var[\log p(X)] +\mathcal O\paren{\frac{\log n}{n^{1-\alpha}} } ~. \label{line:varRemainder}
	\end{align}
	The equality on line~\ref{line:cauchyProd} follows from Proposition~\ref{prop:scndmnt}, plugging in $n-1$ for $n$, on the first expected value of line~\ref{line:locvardecomp} and Proposition~\ref{prop:mathind}, also plugging in $n-1$ for $n$, on the second expected value. 
	Apply Proposition \ref{prop:varremainders} to bound both remainder terms of line \ref{line:cauchyProd}.
	Both of the terms of line~\ref{line:logsq} are related to the estimators bias, which is bounded in Proposition~\ref{bias}.

	Moving to the covariance term of line~\ref{line:vardecomp}, let $X^{(2)} =Y$ and $(m_X, m_Y, n-2-m_X-m_Y)$ be a multinomial random vector of $n-2$ trials with probabilities $(p(X), p(Y), 1-p(X)-p(Y))$, respectively.
	Given $X\neq Y$, then $m^{(1)} =m_X+1$ and $m^{(2)}=m_Y+1$.
	But, given $X =Y$, then $m^{(1)} =m_X +2$.
	With the law of total covariance and the independence of $X$ and $Y$, using $J(m_X+1) = J(m_X) +1/(m_X+1)$ when $X=Y$, we have
	\begin{align}
		&\cov\brac{ J(n-1)-J\paren{m^{(1)}-1}, J(n-1)-J\paren{m^{(2)}-1} } \\
		&= \cov\brac{ J(n-2) -J(m_X), J(n-2) -J(m_Y) } \\
		&\quad +\E_X \brac{ p_X \paren{ \var\brac{\frac{1}{m_X+1}} -2\cov\brac{ J(n-2) -J(m_X), \frac{1}{m_X+1}} } } 
		= \mathcal O\paren{\frac{\log n}{n^{2-\alpha}} } ~.
	\end{align}
	The last bound follows from Lemma~\ref{lem:cov} and Proposition~\ref{prop:addterms}.
\end{proof}

\begin{prop}\label{prop:scndmnt}
	Let $M$ be a binomial random variable of $n$ trials each with probability $p$.
	Then
	\begin{align}
		\E \brac{ J(n)-J(M) }^2  
		&= \sum_{m=2}^n \sum_{k=1}^{m-1} \frac{ (1-p)^m }{k(m-k)} +\sum_{m=1}^n \sum_{k=n-m+1}^n \frac{ (1-p)^m }{mk} \\
		&= (\log p)^2 -\sum_{m=n+1}^\infty \sum_{k=1}^{m-1} \frac{(1-p)^m}{k(m-k)} +\sum_{m=1}^n \sum_{k=n-m+1}^n \frac{ (1-p)^m }{mk} ~.
	\end{align}
\end{prop}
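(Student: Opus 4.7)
The plan is induction on $n$, following the template of the proofs of Propositions~\ref{prop:mathind}, \ref{propHarmonicBinomial}, and \ref{propReciprocalBinomial}. Write $U_n := \E[(J(n) - J(M_n))^2]$ with $M_n \sim \text{Binomial}(n,p)$, and set $A_n := \sum_{m=2}^n \sum_{k=1}^{m-1} \frac{(1-p)^m}{k(m-k)}$ and $B_n := \sum_{m=1}^n \sum_{k=n-m+1}^n \frac{(1-p)^m}{mk}$; the target is $U_n = A_n + B_n$. The base case $n=1$ is immediate: $U_1 = (J(1)-J(0))^2(1-p) = 1-p$, $A_1 = 0$, and $B_1 = (1-p)/(1 \cdot 1) = 1-p$. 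For the induction step, I apply Pascal's identity $\binom{n+1}{m} = \binom{n}{m} + \binom{n}{m-1}$, reindex $m \mapsto m+1$ in the second piece, and expand the squared differences as
\begin{align*}
(J(n+1)-J(m))^2 &= (J(n)-J(m))^2 + \frac{2(J(n)-J(m))}{n+1} + \frac{1}{(n+1)^2}, \\
(J(n+1)-J(m+1))^2 &= \paren{J(n)-J(m) + \frac{1}{n+1} - \frac{1}{m+1}}^2.
\end{align*}
Collecting cross terms yields a recursion for $U_{n+1} - U_n$ in terms of $\E[J(n)-J(M_n)]$, $\E[1/(M_n+1)]$, $\E[1/(M_n+1)^2]$, and $\E[(J(n)-J(M_n))/(M_n+1)]$.

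The first expectation is Proposition~\ref{prop:mathind}. The other three each transform, via the identity $\frac{1}{m+1}\binom{n}{m} = \frac{1}{n+1}\binom{n+1}{m+1}$, into $(n+1)^{-1}$ times an $(n+1)$-trial binomial sum, which Propositions~\ref{propHarmonicBinomial} and \ref{propReciprocalBinomial} (with $p+q=1$) evaluate exactly in terms of $J(n+1)$, $\sum_{l=1}^{n+1}(1-p)^l/l$, and $\sum_{l=1}^{n+1}(1-p)^{n+1-l}/l$. In parallel, $(A_{n+1}+B_{n+1}) - (A_n+B_n)$ is assembled directly from the closed form: $A_{n+1}-A_n$ contributes only the $m=n+1$ slice $\sum_{k=1}^n (1-p)^{n+1}/[k(n+1-k)] = 2J(n)(1-p)^{n+1}/(n+1)$ via partial fractions, while $B_{n+1}-B_n$ shifts the inner range up by one for each $m\leq n$ and gains a new $m=n+1$ block, which after the substitution $l = n+1-m$ simplifies to $(1-p)^{n+1} J(n+1)/(n+1) - (n+1)^{-1}\sum_{l=1}^n (1-p)^{n+1-l}/l$. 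Applying $J(n+1)=J(n)+1/(n+1)$ throughout, the two increments will then agree term by term, closing the induction.

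The second equality in the proposition is immediate from the Cauchy product of the Taylor series $-\log p = \sum_{m=1}^\infty (1-p)^m/m$ with itself: $(\log p)^2 = \sum_{m=2}^\infty \sum_{k=1}^{m-1}(1-p)^m/[k(m-k)]$, so truncating the outer sum at $m \leq n$ yields $A_n$ and the tail is exactly the middle term of the second displayed equality. The main obstacle will be the algebraic bookkeeping inside the induction step: several cross terms of order $1/(n+1)^2$ and $(1-p)^{n+1}/(n+1)$ arise, and their cancellation becomes visible only after every $(n+1)$-trial sum has been put into the canonical forms above. Because Propositions~\ref{propHarmonicBinomial} and \ref{propReciprocalBinomial} provide those forms exactly, no approximation enters the argument; the difficulty is entirely in managing the intermediate algebra.
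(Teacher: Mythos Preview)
Your proposal is correct and follows essentially the same approach as the paper: induction on $n$ via Pascal's rule and the recursion $J(n+1)=J(n)+1/(n+1)$, with the resulting reciprocal-binomial sums handled by Proposition~\ref{propReciprocalBinomial}. The only organizational difference is that the paper first derives the recursion $S_{n+1}-S_n = \frac{3J(n+1)(1-p)^{n+1}}{n+1} - \frac{1}{n+1}\sum_{k=1}^{n+1}\frac{(1-p)^{n+1-k}}{k} + \frac{1-2(1-p)^{n+1}}{(n+1)^2}$ and then telescopes and rearranges into the target form, whereas you compute the increment $(A_{n+1}+B_{n+1})-(A_n+B_n)$ directly and match; in particular, the paper exploits a cancellation between the $1/(m+1)^2$ and $(J(n)-J(m))/(m+1)$ contributions before invoking Proposition~\ref{propReciprocalBinomial}, so it never needs Proposition~\ref{propHarmonicBinomial} or a separate evaluation of $\E[1/(M_n+1)^2]$, but this is a minor bookkeeping variation rather than a different argument.
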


\begin{proof}
	Using induction to simplify the first term, let
	\begin{equation}
		S_n := \sum_{m=0}^n \brac{ J(n)-J(m) }^2 \binom{n}{m} p^m (1-p)^{n-m}~.
	\end{equation}
	Using the identities, $J(n+1) = J(n)+\frac{1}{n+1}$ and $\binom{n+1}{m} = \binom{n}{m} +\binom{n}{m-1}$, summation reindexing,
	\begin{align}
	&S_{n+1} = \sum_{m=0}^{n+1} \brac{J(n+1) -J(m)}^2 \binom{n+1}{m} p^m(1-p)^{n+1-m} \\
	&= \sum_{m=0}^{n+1} \paren{\brac{J(n) -J(m)}^2 +\frac{1}{(n+1)^2} +\frac{2 [J(n)-J(m)] }{n+1} } \binom{n+1}{m} p^m(1-p)^{n+1-m} \\
	&= \sum_{m=0}^{n} \brac{J(n) -J(m)}^2 \binom{n}{m} p^m(1-p)^{n+1-m} \\
	&\quad +\sum_{m=0}^{n} \brac{J(n) -J(m+1)}^2 \binom{n}{m} p^{m+1}(1-p)^{n-m} \\
	&\quad +\frac{1}{(n+1)^2} +\sum_{m=0}^{n+1} \frac{2 [J(n)-J(m)] }{n+1}  \binom{n+1}{m} p^m(1-p)^{n+1-m} \\
	&= S_n +\sum_{m=0}^n \paren{ \frac{1}{(m+1)^2} -\frac{2[ J(n)-J(m) ]}{m+1} } \binom{n}{m} p^{m+1}(1-p)^{n-m} \\
	&\quad +\frac{1}{(n+1)^2} +\sum_{m=0}^{n+1} \frac{2 [J(n)-J(m)] }{n+1}  \binom{n+1}{m} p^m(1-p)^{n+1-m} \\
	&= S_n +\frac{1}{n+1}\sum_{m=1}^{n+1} \paren{ \frac{1}{m} -2[ J(n)-J(m-1) ] } \binom{n+1}{m} p^{m}(1-p)^{n+1-m} \\
	&\quad +\frac{1}{(n+1)^2} +\frac{1}{n+1} \sum_{m=1}^{n+1} 2 [J(n)-J(m)]  \binom{n+1}{m} p^m(1-p)^{n+1-m} \\
	&\quad +\frac{ 2J(n) (1-p)^{n+1} }{n+1} \\
	&= S_n +\frac{1}{(n+1)^2} -\frac{1}{n+1}\sum_{m=1}^{n+1} \frac{1}{m} \binom{n+1}{m} p^{m}(1-p)^{n+1-m} +\frac{ 2J(n) (1-p)^{n+1}}{n+1}  \\
	 &= S_n -\frac{(1-p)^{n+1}}{n+1} \sum_{k=1}^{n+1} \frac{ (1-p)^{-k} -1}{k} +\frac{ 2J(n+1) (1-p)^{n+1}}{n+1} +\frac{1- 2(1-p)^{n+1}}{(n+1)^2} \label{trickyrecip} \\
	 &= S_n +\frac{ 3J(n+1) (1-p)^{n+1}}{n+1} -\frac{1}{n+1} \sum_{k=1}^{n+1} \frac{ (1-p)^{n+1-k} }{k} +\frac{1- 2(1-p)^{n+1}}{(n+1)^2}
\end{align}
	using Proposition~\ref{propReciprocalBinomial} on line~\ref{trickyrecip}.
	Together with $S_1 = 1-p$, then
	\begin{align}
		S_n &=\sum_{m=1}^n \sum_{k=1}^m \frac{ 3(1-p)^m -(1-p)^{m-k} }{m k} +\sum_{m=1}^n \frac{1-2(1-p)^m}{m^2} \\
		    &= \sum_{m=1}^n \sum_{k=1}^m \frac{2(1-p)^m}{mk} -\sum_{m=1}^n \frac{2(1-p)^m}{m^2} \label{line:ind2} \\
		    &\quad +\sum_{m=1}^n \sum_{k=1}^m \frac{(1-p)^m -(1-p)^{m-k}}{mk} +\sum_{m=1}^n \frac{1}{m^2} \label{line:varrmdr}~.
	\end{align}

	Working with these lines separately, on line~\ref{line:ind2}, re-indexing, canceling, and using the fact that $\frac{1}{mk} +\frac{1}{m(m-k)} = \frac{1}{k(m-k)}$, we have
	\begin{align}
		&\sum_{m=1}^n \sum_{k=1}^m \frac{2(1-p)^m}{mk} -\sum_{m=1}^n \frac{2(1-p)^m}{m^2} 
		%= \sum_{m=1}^n \paren{ \sum_{k=1}^m \frac{(1-p)^m}{mk} +\sum_{k=0}^{m-1} \frac{(1-p)^m}{m(m-k)} } -\sum_{m=1}^n \frac{2(1-p)^m}{m^2} \\
		= \sum_{m=2}^n \sum_{k=1}^{m-1} \frac{(1-p)^m}{k(m-k)} \\
		&= (\log p)^2 -\sum_{m=n+1}^\infty \sum_{k=1}^{m-1} \frac{(1-p)^m}{k(m-k)}~.
	\end{align}
	The last line follows by recognizing that the resulting sum on the prior line are the first terms of the Cauchy product for the squared Taylor expansion of $\log p$; the subtracted double sum corresponds to those added to complete the Cauchy product.

	On line~\ref{line:varrmdr}, with re-indexing and cancelling
	\begin{align}
		\sum_{m=1}^n \sum_{k=1}^m \frac{(1-p)^m -(1-p)^{m-k}}{mk} +\sum_{m=1}^n \frac{1}{m^2} 
		%&=\sum_{m=1}^n \sum_{k=1}^m \frac{(1-p)^m }{mk} -\sum_{m=0}^{n-1} \sum_{k=1}^{n-m} \frac{ (1-p)^m}{k(m+k)} +\sum_{m=1}^n \frac{1}{m^2} \\
		%&=\sum_{m=1}^n \sum_{k=1}^m \frac{(1-p)^m }{mk} -\sum_{m=1}^{n-1} \sum_{k=1}^{n-m} \frac{ (1-p)^m}{k(m+k)} \\
		%&=\sum_{m=1}^n \sum_{k=1}^m \frac{(1-p)^m }{mk} -\sum_{m=1}^{n-1} \frac{(1-p)^m}{m} \sum_{k=1}^{n-m} \paren{ \frac{1}{k} -\frac{1}{m+k}} \\
		%&=\sum_{m=1}^n \frac{ J(m) (1-p)^m }{m} -\sum_{m=1}^{n-1} \frac{ \brac{ J(n-m) -J(n) +J(m) } (1-p)^m}{m} \\
		 &=\sum_{m=1}^{n} \sum_{k=n-m+1}^n \frac{ (1-p)^m }{mk}~.
	\end{align}
\end{proof}

\begin{prop} \label{prop:varremainders}
	If $p_j \lesssim j^{-1/\alpha}$ for $\alpha\in [0,1)$, then there is a $C_1>0$ that does not depend on $n$ such that
	\begin{equation}
		\sum_{m=1}^n \sum_{k=n-m+1}^n \frac{ \E\brac{(1-p_X)^m} }{mk} \leq C_1 \paren{ \frac{1}{n^{1-\alpha}} +\frac{ \log n}{n}} ~.
	\end{equation}
	Moreover, if $\ell\in\N$, then there are $C_2, C_3>0$ that do not depend on $n$ such that
	\begin{equation}
		\sum_{m=1}^n \sum_{k=n-m+1}^n \frac{ \sum_i p_i^\ell (1-p_i)^{m+k} }{mk} \leq \frac{C_2}{n^{\ell-\alpha}}
	\end{equation}
	and 
	\begin{equation}
		\sum_{m=n+1}^\infty \sum_{k=1}^{m-1} \frac{ \sum_i p_i^\ell (1-p_i)^m  }{k(m-k)} 
		\leq \frac{ C_3 \log n }{n^{\ell-\alpha}}~.
	\end{equation}
\end{prop}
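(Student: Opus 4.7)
The plan is to apply Proposition \ref{lem:missingmass} as the primary analytic input in all three bounds, reducing each to an arithmetic double sum that then collapses via partial fractions, a change of index, and one telescoping identity for harmonic numbers. The hard part throughout is avoiding a spurious logarithmic factor: a naive triangle-inequality estimate in either (1) or (2) produces an extra $\log n$ that would violate the stated rates. The resolution is the exact identity
\begin{equation*}
	\sum_{q=0}^{n-1} \brac{J(n)-J(q)} = n,
\end{equation*}
proved by a Fubini swap in the double sum defining the harmonic numbers, which will be used in slightly different forms for both (1) and (2).

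Claim (3) is quickest. Proposition \ref{lem:missingmass} gives $\sum_i p_i^\ell (1-p_i)^m \leq C/m^{\ell-\alpha}$, and partial fractions give $\sum_{k=1}^{m-1} 1/[k(m-k)] = 2J(m-1)/m$. The double sum collapses to $\sum_{m\geq n+1} J(m-1)/m^{\ell-\alpha+1}$, and the integral test (using $\ell-\alpha>0$ and $J(m-1)\lesssim \log m$) yields the required $\mathcal O(\log n / n^{\ell-\alpha})$ rate.

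For claim (2), applying Proposition \ref{lem:missingmass} to $\sum_i p_i^\ell (1-p_i)^{m+k}$ gives $C/(m+k)^{\ell-\alpha}$. Reindex by $s=m+k\in[n+1,2n]$ with $m\in[s-n,n]$. Partial fractions and symmetry of the $m$-sum reduce the inner sum to $2\brac{J(n)-J(s-n-1)}/s$; since $s\asymp n$, the remaining outer sum is bounded by $(C/n^{\ell-\alpha+1})\sum_{q=0}^{n-1}\brac{J(n)-J(q)}$, which the master identity evaluates to exactly $n$. This yields $C_2/n^{\ell-\alpha}$ with no logarithmic loss.

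For claim (1), apply Proposition \ref{lem:missingmass} with $\ell=1$ to get $\E\brac{(1-p_X)^m} \lesssim m^{-(1-\alpha)}$, reducing to bounding $\sum_{m=1}^n \brac{J(n)-J(n-m)}/m^{2-\alpha}$. Split the range at $m=n/2$. On the small side $m\leq n/2$, use $J(n)-J(n-m) \leq m/(n-m+1) \leq 2m/n$ to bound the partial sum by $(2/n)\sum_{m\leq n/2} m^{\alpha-1}$, which is $\mathcal O(1/n^{1-\alpha})$ for $\alpha>0$ and $\mathcal O(\log n / n)$ for $\alpha=0$. On the large side $m>n/2$, pull out $m^{-(2-\alpha)}\leq (2/n)^{2-\alpha}$ and invoke the master identity to bound $\sum_{m>n/2}\brac{J(n)-J(n-m)} \leq n$, contributing another $\mathcal O(1/n^{1-\alpha})$. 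Summing the two regimes gives the stated $C_1(1/n^{1-\alpha} + \log n / n)$ bound.
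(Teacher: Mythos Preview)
Your proposal is correct. For claim (3) you and the paper do essentially the same thing: partial fractions $\sum_{k=1}^{m-1} 1/[k(m-k)] = 2J(m-1)/m$, Proposition~\ref{lem:missingmass}, then an integral comparison on $\sum_{m>n} \log(m)/m^{\ell-\alpha+1}$.

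For claims (1) and (2), your route is genuinely different and cleaner. The paper bounds the inner harmonic sum by an integral, $\sum_{k=n-m+1}^n 1/k \le \log\paren{\tfrac{n}{n-m+1}} + \tfrac{1}{n-m+1}$, and then, for claim (1), \emph{re-expands} the logarithm as a Taylor series $\sum_{k\ge 1} ((m-1)/n)^k/k$, swaps sums, and integrates term by term; for claim (2) it changes variables to $s=m+k$ as you do, but then handles $\log(n/(s-n))$ via the integral $\int_0^1 (-\log(1-u))/(2-u)^{\ell+1-\alpha}\,du$. Both are somewhat heavy. You sidestep all of this with the exact identity $\sum_{q=0}^{n-1}[J(n)-J(q)]=n$, which (after the same reindexing $s=m+k$ in claim (2), or the dyadic split $m\le n/2$ versus $m>n/2$ in claim (1)) collapses the outer sum without any Taylor expansion or special integral. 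The gain is brevity and transparency: your argument makes it visible \emph{why} no logarithm appears in claim (2), whereas in the paper that fact emerges only after the $\log(n/(s-n))$ singularity is shown to be integrable. The paper's approach, on the other hand, gives slightly more explicit tracking of the constants through the Gamma-type integrals, which could be useful if one later wanted sharp leading constants rather than just rates.
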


\begin{proof}
	Applying Proposition~\ref{lem:missingmass},
	\begin{align}
		&\sum_{m=1}^n \sum_{k=n-m+1}^n \frac{ \E\brac{(1-p_X)^m} }{mk} 
		\leq \sum_{m=1}^n \frac{C}{m^{2-\alpha} } \sum_{k=n-m+1}^n \frac{1}{k} \\
		&\leq \sum_{m=1}^n \frac{C}{m^{2-\alpha} } \brac{ \int_{n-m+1}^n \frac{1}{x} \,dx +\frac{1}{n-m+1} }~. \label{line:intboundharm}
	\end{align}
	The last line uses $\sum_{m=a}^b f(m) \leq \int_a^b f(x) \,dx +f(a)$ for decreasing $f$ and $b>a>0$.
	Distributing the sum, we work with each separately.
	First, we evaluate the integral, then apply a Taylor expansion:
	\begin{align}
		&\sum_{m=1}^n \frac{C}{m^{2-\alpha} } \int_{n-m+1}^n \frac{1}{x} \,dx 
		= \sum_{m=1}^n \frac{C}{m^{2-\alpha} } \log\paren{\frac{n}{n-m+1}} \\
		&= \sum_{m=1}^n \frac{C}{m^{2-\alpha} } \sum_{k=1}^\infty \frac{1}{k} \paren{\frac{m-1}{n}}^k 
		\leq \sum_{k=1}^\infty \frac{C}{k n^k} \sum_{m=1}^n m^{k-2+\alpha} \\
		&\leq \sum_{k=1}^\infty \frac{C}{k n^k} \brac{ \int_1^n x^{k-2+\alpha} \,dx +1} \\
		&= \frac{ I(\alpha=0) C\log n}{n} +\frac{I(\alpha\neq 0) C\paren{ n^\alpha -1} }{\alpha n} +\sum_{k=2}^\infty \frac{C\paren{n^{k-1+\alpha} -1} }{(k-1+\alpha)k n^k} -C\log\paren{1-\frac{1}{n}} \label{line:firstsep} \\
		&\leq \frac{ C\log n}{n} +\frac{C }{ \alpha n^{1-\alpha}} +\frac{C}{n^{1-\alpha}} \sum_{k=1}^\infty \frac{1}{(k+\alpha)(k+1) } +\frac{C}{n} \leq C_1 \paren{ \frac{ \log n}{n} +\frac{1}{n^{1-\alpha}}} ~. \label{line:fambound}
	\end{align}
	The equality from line~\ref{line:firstsep} evaluates the integral of the summand when $k=1$ separately.
	The first inequality from line~\ref{line:fambound} applied $-\log(1-x) \leq x$.
	For the second inequality of line~\ref{line:fambound}, the summation is bounded by $\zeta(2)$; because all terms are either dominated by $\log(n)/n$ or $1/n^{1-\alpha}$, there is a constant $C_1$ that does not depend on $n$ that hold uniformly for all $n$.

	Moving to the second sum from line~\ref{line:intboundharm}, we bound the summation with $\sum_{m=a}^b f(m) \leq \int_a^b f(x) \,dx +f(a)$,
	\begin{equation}
		\sum_{m=1}^n \frac{C}{m^{2-\alpha} (n-m+1) } 
		\leq \int_1^n \frac{C}{x (n-x+1) } \,dx +\frac{C}{n} = \frac{ 2C\log n }{n+1} +\frac{C}{n} ~.
	\end{equation}
	Taken together, all terms are dominated by the final bound of line~\ref{line:fambound}, completing the first claim.

	Moving to the next claim, we re-index, use partial fraction decomposition and Proposition~\ref{lem:missingmass}:
	\begin{align}
		&\sum_{m=1}^n \sum_{k=n-m+1}^n \frac{ \sum_i p_i^\ell (1-p_i)^{m+k} }{mk} 
		= \sum_{m=n+1}^{2n} \sum_{k=m-n}^n \frac{  \sum_i p_i^\ell (1-p_i)^m }{ k(m-k) } \\
		&\leq \sum_{m=n+1}^{2n} \frac{C}{m^{\ell+1-\alpha}} \sum_{k=m-n}^n \brac{ \frac{1}{k} +\frac{1}{m-k} } 
		= \sum_{m=n+1}^{2n} \frac{2C}{m^{\ell+1-\alpha}} \sum_{k=m-n}^n \frac{1}{k} \\
		&\leq \sum_{m=n+1}^{2n} \frac{2C}{m^{\ell+1-\alpha}} \brac{ \log\paren{\frac{n}{m-n}} +\frac{1}{m-n} } \label{line:harmonicIneq} \\
		&\leq \int_{n}^{2n} \frac{2C}{x^{\ell+1-\alpha}} \log\paren{\frac{n}{x-n}} \,dx +\int_{n+1}^{2n} \frac{2C}{x^{\ell+1-\alpha}} \,dx \\
		&= \frac{2C}{n^{\ell-\alpha}} \int_0^1 \frac{ -\log(1-u) }{ (2-u)^{\ell+1-\alpha} } \,du +\frac{2C}{(\ell-\alpha)} \brac{ \frac{1}{(n+1)^{\ell-\alpha}} -\frac{1}{ (2n)^{\ell-\alpha}} } \label{line:intSubs}
		\leq \frac{C_2}{n^{\ell-\alpha}}~.
	\end{align}
	The inequality from line~\ref{line:harmonicIneq} follows from $\sum_{k=m-n}^n \frac{1}{k} \leq \int_{m-n}^n \frac{dx}{x} +\frac{1}{m-n} = \log(n/(m-n)) +1/(m-n)$.
	The equality from line~\ref{line:intSubs} follows from integral substitution with $u= 2-x/n$; the integral is finite for $\ell-\alpha >0$.
	Because all terms are dominated $1/n^{\ell-\alpha}$, there is a constant $C_2$ that does not depend on $n$ such that the final inequality holds for all $n$.

	Finally, on the last claim, we use partial fractions, the Proposition~\ref{lem:missingmass}, then integral bounds,
	\begin{align}
		&\sum_{m=n+1}^\infty \sum_{k=1}^{m-1} \frac{ \sum_i p_i^\ell (1-p_i)^m  }{k(m-k)} 
		= \sum_{m=n+1}^\infty \sum_{k=1}^{m-1} \frac{ 2\sum_i p_i^\ell (1-p_i)^m  }{mk} \\
		&\leq \sum_{m=n+1}^\infty \sum_{k=1}^{m-1} \frac{ 2C }{m^{\ell+1-\alpha} k} 
		%\leq \sum_{m=n+1}^\infty \frac{2C \log m }{m^{\ell+1-\alpha} } +\sum_{m=n+1}^\infty \frac{2C}{m^{\ell+1-\alpha} } \\
		\leq \int_n^\infty \frac{2C \log x }{x^{\ell+1-\alpha} } \,dx +\int_n^\infty \frac{2C}{x^{\ell+1-\alpha} } \,dx \\
		&= \frac{ 2C (\ell-\alpha) \log(n) +1 }{ (\ell-\alpha)^2 n^{\ell-\alpha}} + \frac{ 2C }{ (\ell-\alpha) n^{\ell-\alpha} } \leq \frac{ C_3 \log n }{n^{\ell-\alpha}}~.
	\end{align}
\end{proof}

\begin{lem} \label{lem:cov}
	Let $\mathbf X_n, X, Y$	be an i.i.d.\ sample from $p$ where $p_j \lesssim j^{-1/\alpha}$ for $\alpha\in [0,1)$.
	Then
	\begin{equation}
		\cov\brac{ J(n)-J(m_X), J(n)-J(m_Y) } \lesssim \frac{1}{n^{2-\alpha}} +\frac{\log n}{n^2}~.
	\end{equation}
\end{lem}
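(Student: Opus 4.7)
The plan is to use the law of total covariance conditional on $(X,Y)$, which localizes the calculation to a conditional covariance. Since $m_X$ has the marginal distribution $\text{Binomial}(n, p_X)$ whatever the value of $Y$, and symmetrically for $m_Y$, Proposition \ref{prop:mathind} gives
\begin{equation*}
\E\brac{J(n)-J(m_X) \mid X, Y} = \E\brac{J(n)-J(m_X) \mid X},
\end{equation*}
which is independent of the corresponding quantity for $Y$ because $X\perp Y$. Hence the product-of-conditional-means term in the law of total covariance vanishes, and the problem reduces to bounding
\begin{equation*}
\cov\brac{J(n)-J(m_X), J(n)-J(m_Y)} = \E\brac{\cov\paren{J(n)-J(m_X), J(n)-J(m_Y) \mid X, Y}}.
\end{equation*}

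I would next split the outer expectation into the diagonal event $\{X=Y\}$ and its complement. On the diagonal, $m_X=m_Y$, so the conditional covariance collapses to a variance $\var\brac{J(n)-J(M_j)}$ with $M_j\sim\text{Binomial}(n,p_j)$. Propositions \ref{prop:mathind} and \ref{prop:scndmnt} yield an exact algebraic expression for this variance as a combination of tail/boundary double sums. Weighting by $p_j^2$ and summing over $j$, the resulting expressions are precisely of the form handled by Proposition \ref{prop:varremainders} (with $\ell=2$) together with Proposition \ref{lem:missingmass}, so this diagonal contribution is bounded at the target rate $1/n^{2-\alpha} + \log n/n^2$.

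Off the diagonal, $(m_j,m_i)$ has the marginal multinomial joint distribution, and Proposition \ref{prop:multiProd} supplies an exact algebraic expression for $\E\brac{(J(n)-J(m_j))(J(n)-J(m_i))}$ whose leading piece coincides with the product $\E\brac{J(n)-J(m_j)}\,\E\brac{J(n)-J(m_i)}$ coming from two applications of Proposition \ref{prop:mathind}. After subtracting, only residual dilogarithm-type double sums survive. Weighting by $p_j p_i$ and summing over $j\neq i$, Proposition \ref{prop:sumtoint} characterizes the limiting identity these residuals satisfy; I would then use the truncation structure of the expression from Proposition \ref{prop:multiProd}, together with Proposition \ref{lem:missingmass} at $\ell=1$ and $\ell=2$, to bound the truncation errors at the desired rate.

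The main obstacle is the algebraic bookkeeping in the off-diagonal case: one must verify that the leading $\log p_j \log p_i$ behavior in the exact joint expectation cancels precisely against the product of marginals, so that the remainder is of order $1/n^{2-\alpha} + \log n/n^2$ rather than $1/n$. This cancellation is the structural reason the covariance attains the $n^{-(2-\alpha)}$ rate rather than the $n^{-1}$ rate a naive delta-method calculation on the multinomial would suggest, and the machinery built by Propositions \ref{prop:multiProd} and \ref{prop:sumtoint} is precisely designed to make this cancellation visible at the level of exact identities before the tail estimates are applied.
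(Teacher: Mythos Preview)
Your overall architecture matches the paper's: law of total covariance, vanishing outer covariance by independence of $X,Y$, then diagonal/off-diagonal split with Propositions~\ref{prop:scndmnt}, \ref{prop:mathind}, \ref{prop:multiProd}, \ref{prop:sumtoint}. The gap is in the rates you assign to the two pieces.

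The diagonal contribution is \emph{not} of order $n^{-(2-\alpha)}$ by itself. After subtracting the squared first moment from the second moment you obtain
\[
\sum_i p_i^2\,\var\!\brac{J(n)-J(m_i)}
=\sum_{m=1}^n\sum_{k=n-m+1}^n\frac{\sum_i p_i^2\brac{(1-p_i)^m-(1-p_i)^{m+k}}}{mk}.
\]
Proposition~\ref{prop:varremainders} with $\ell=2$ bounds only the $(1-p_i)^{m+k}$ piece; the surviving $(1-p_i)^m$ piece has inner sum $\sum_{k=n-m+1}^n 1/k\sim m/n$ for small $m$, so it is $\asymp n^{-1}\sum_m\sum_i p_i^2(1-p_i)^m=n^{-1}\sum_i p_i(1-p_i)$, i.e.\ genuinely of order $1/n$. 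Likewise, in the off-diagonal term the $\log p_j\log p_i$ cancellation you identify is real, but what remains is (see Proposition~\ref{covarMulti}) dominated by $-\sum_{m>n}m^{-2}\sum_i p_i(1-p_i)\asymp -1/n$, again of order $1/n$ rather than $n^{-(2-\alpha)}$.

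The step you are missing is a \emph{second} cancellation, this time between the diagonal and off-diagonal residuals. The paper rewrites the off-diagonal constant via $p_i(1-p_i)=\sum_{m\ge1}p_i^2(1-p_i)^m$ and the diagonal inner sum via $\sum_{k=n-m+1}^n 1/k=\sum_{k>n}1/[k(k-m)]$, so both $1/n$-sized residuals become double sums in the same variables; subtracting yields
\[
\sum_{m=1}^n\sum_{k>n}\frac{m\,\sum_i p_i^2(1-p_i)^m}{k^2(k-m)}+\mathcal{O}\!\paren{n^{-(2-\alpha)}},
\]
and only this combined object is small enough. Without this cross-cancellation your argument stalls at $1/n$, which is exactly the rate a naive bound would give and not the claimed $n^{-(2-\alpha)}+\log n/n^2$.
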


\begin{proof}[Proof of Lemma~\ref{lem:cov}]
	Using the law of total covariance, 
	\begin{align}
		&\cov\brac{J(n)-J(m_X), J(n)-J(m_Y)} \\
		&= \E_{X,Y}\brac{ \cov\brac{J(n)-J(m_X), J(n)-J(m_Y) \middle| X, Y} } \\
		&\quad +\cov\brac{ \E\brac{ J(n)-J(m_X) \middle| X,Y }, \E\brac{ J(n)-J(m_Y) \middle| X,Y } } \label{line:indepcov} \\
		&= \sum_i p_i^2 \var\brac{J(n)-J(m_i)} +\sum_i \sum_{j\neq i} p_i p_j \cov\brac{J(n)-J(m_i), J(n)-J(m_j)}~. \label{line:varcovar}
	\end{align}
	The last equality follows because the conditional expectations are functions of independent random variables, so are uncorrelated (line~\ref{line:indepcov}), and by writing the expectation over $X$ and $Y$ as the sums over $X=Y$ and $X\neq Y$.

	Beginning with the variance term of line~\ref{line:varcovar}, using Propositions~\ref{prop:scndmnt}~and~\ref{prop:mathind} then canceling terms, we have
	\begin{align}
		&\sum_i p_i^2 \var\brac{J(n)-J(m_i)} \\
		&= \sum_i p_i^2 \brac{ \E\brac{J(n)-J(m_i)}^2 -\paren{\E\brac{J(n) -J(m_i)}}^2 } \\
		&= \sum_i p_i^2 \brac{ \sum_{m=1}^n \sum_{k=1}^{n-m} \frac{ (1-p_i)^{m+k} }{mk} +\sum_{m=1}^n \sum_{k=n-m+1}^n \frac{(1-p_i)^m}{mk} -\paren{\sum_{m=1}^n \frac{(1-p_i)^m}{m} }^2 } \\
		&= \sum_{m=1}^n \sum_{k=n-m+1}^n \frac{ \sum_i p_i^2 (1-p_i)^m -\sum_i p_i^2 (1-p_i)^{m+k} }{mk} \\
		&= \sum_{m=1}^n \sum_{k=n-m+1}^n \frac{ \sum_i p_i^2 (1-p_i)^m }{mk} +\mathcal O\paren{ \frac{1}{n^{2-\alpha}}} ~. \label{line:varrate}
	\end{align}
	The last line follows from the second claim of Proposition~\ref{prop:varremainders}.

	Moving to the covariance from line~\ref{line:varcovar}, with Proposition~\ref{covarMulti}, we have
	\begin{align}
		&\sum_i \sum_{j\neq i} p_i p_j \cov\brac{J(n)-J(m_i), J(n)-J(m_j)} \\
		&= -\sum_{m=n+1}^\infty \frac{ \sum_i \sum_{j\neq i} p_i p_j \brac{1 -(1-p_i)^m} \brac{ 1-(1-p_j)^m } }{m^2} \label{line:cov1} \\
		&\quad -\sum_{m=n+1}^\infty \frac{ \sum_i \sum_{j\neq i} p_i p_j \brac{ (1-p_i)^m \log(1-p_i) +(1-p_j)^m \log(1-p_j) } }{m} \label{line:cov2} \\
		&\quad +\sum_{m=n+1}^\infty \frac{ \sum_i \sum_{j\neq i} p_i p_j \brac{ \mathcal O(p_i(1-p_i)^{m-1} (1-p_j)^m) + \mathcal O(p_j(1-p_j)^{m-1} (1-p_i)^m) } }{m} \label{line:cov3} \\
		&= -\sum_{m=n+1}^\infty \frac{ \sum_i p_i (1-p_i) }{m} +\mathcal O\paren{\frac{1}{n^{2-\alpha}}}~. \label{line:covrate} 
	\end{align}
	We explain the equality from line~\ref{line:covrate} taking lines~\ref{line:cov1}, \ref{line:cov2}, and \ref{line:cov3} separately.
	Beginning with line~\ref{line:cov1}, we use $\sum_{j\neq i} p_j = 1-p_i$, Proposition~\ref{lem:missingmass}, and integral bounds:
	\begin{align}
		&-\sum_{m=n+1}^\infty \frac{ \sum_i \sum_{j\neq i} p_i p_j \brac{1 -(1-p_i)^m} \brac{ 1-(1-p_j)^m } }{m^2} \\
		&= -\sum_{m=n+1}^\infty \frac{ \sum_i p_i (1-p_i) -2\sum_i p_i (1-p_i)^{m+1} +\brac{ \sum_i p_i(1-p_i)^m}^2 -\sum_i p_i^2(1-p)^{2m} }{m^2} \\
		&= -\sum_{m=n+1}^\infty \frac{ \sum_i p_i (1-p_i) -\mathcal O\paren{ m^{\alpha-1}} +\brac{ \mathcal O\paren{m^{\alpha-1}} }^2 -\mathcal O\paren{ m^{\alpha-2}} }{m^2} \\
		&= -\sum_{m=n+1}^\infty \frac{ \sum_i p_i (1-p_i) }{m^2} +\sum_{m=n+1}^\infty \mathcal O\paren{ \frac{1}{m^{3-\alpha}} }
		= -\sum_{m=n+1}^\infty \frac{ \sum_i p_i (1-p_i) }{m^2} +\mathcal O\paren{\frac{1}{n^{2-\alpha}}}~.
	\end{align}
	Moving to line~\ref{line:cov2}, using the fact that $-\log(1-x) \leq \frac{x}{1-x}$ for $x\in [0,1)$ and Proposition~\ref{lem:missingmass},
	\begin{align}
		&-\sum_{m=n+1}^\infty \frac{ \sum_i \sum_{j\neq i} p_i p_j \brac{ (1-p_i)^m \log(1-p_i) +(1-p_j)^m \log(1-p_j) } }{m} \\
		&= -\sum_{m=n+1}^\infty \frac{ 2\sum_i p_i (1-p_i)^{m+1} \log(1-p_i) }{m} 
		\leq \sum_{m=n+1}^\infty \frac{ 2\sum_i p_i^2 (1-p_i)^m }{m} \\
		&\lesssim \sum_{m=n+1}^\infty \frac{1}{m^{3-\alpha}}
		\leq \frac{1}{n^{2-\alpha}}~.
	\end{align}
	Last, looking at line~\ref{line:cov3} and using Proposition~\ref{lem:missingmass},
	\begin{align}
		&\sum_{m=n+1}^\infty \frac{ \sum_i \sum_{j\neq i} p_i p_j \brac{ \mathcal O(p_i(1-p_i)^{m-1} (1-p_j)^m) + \mathcal O(p_j(1-p_j)^{m-1} (1-p_i)^m) } }{m} \\
		&= \sum_{m=n+1}^\infty \frac{ \mathcal O\paren{ \sum_i p_i^2 (1-p_i)^{m-1} \brac{ -p_i(1-p_i)^m +\sum_j p_j(1-p_j)^m } } }{m} \\
		&= \sum_{m=n+1}^\infty \frac{ \mathcal O\paren{ m^{\alpha-3}} +\mathcal O\paren{m^{2\alpha-3}} }{m} =\mathcal O\paren{\frac{1}{n^{3-2\alpha}}}~.
	\end{align}

	Putting these pieces together, we have
	\begin{align}
		&\cov\brac{J(n) -J(m_X), J(n) -J(m_Y)} \\
		&= \sum_i p_i^2 \var\brac{J(n)-J(m_i)} +\sum_i \sum_{j\neq i} p_i p_j \cov\brac{J(n)-J(m_i), J(n)-J(m_j)} \\
		&= \sum_{m=1}^n \sum_{k=n-m+1}^n \frac{ \sum_i p_i^2 (1-p_i)^m }{mk} -\sum_{k=n+1}^\infty \frac{ \sum_i p_i (1-p_i) }{k^2} +\mathcal O\paren{\frac{1}{n^{2-\alpha}}} \label{line:varcocomb} \\
		&= \sum_{m=1}^n \sum_{k=n+1}^\infty \frac{ \sum_i p_i^2 (1-p_i)^m }{k(k-m)} - \sum_{m=1}^\infty \sum_{k=n+1}^\infty \frac{ \sum_i p_i^2 (1-p_i)^m }{k^2} +\mathcal O\paren{\frac{1}{n^{2-\alpha}}} \label{line:sumtransf} \\
		&= \sum_{m=1}^n \sum_{k=n+1}^\infty \frac{ m \sum_i p_i^2 (1-p_i)^m }{k^2(k-m)}  -\sum_{m=n+1}^\infty \sum_i p_i^2 (1-p_i)^m \sum_{k=n+1}^\infty \frac{1}{k^2} +\mathcal O\paren{\frac{1}{n^{2-\alpha}}} \label{line:fracsubtract} \\
		&= \sum_{m=1}^n \sum_{k=n+1}^\infty \frac{ m \sum_i p_i^2 (1-p_i)^m }{k^2(k-m)} +\mathcal O\paren{\frac{1}{n^{2-\alpha}}} \lesssim \frac{1}{n^{2-\alpha}} +\frac{\log n}{n^2} \label{line:simpintbound} ~.
	\end{align}
	Line~\ref{line:varcocomb} pulls from lines~\ref{line:varrate}~and~\ref{line:covrate}.
	Line~\ref{line:sumtransf} uses $\sum_{k=n-m+1}^n \frac{1}{k} = \sum_{k=n+1}^\infty \frac{1}{k(k-m)}$ on the first triple sum term and the geometric series on the second.
	Line~\ref{line:fracsubtract} follows by subtracting the values corresponding to $m=1,\dots, n$ of the second double summation from the first.
	The first equality of line~\ref{line:simpintbound} uses an integral and Proposition~\ref{lem:missingmass} to bound the remaining negative terms by $\mathcal O(1/n^{2-\alpha})$.
	We show the final bound of line~\ref{line:simpintbound} below following a similar pattern to Proposition~\ref{prop:varremainders},
	\begin{equation}
		\sum_{m=1}^n \sum_{k=n+1}^\infty \frac{ m \sum_i p_i^2 (1-p_i)^m }{k^2(k-m)} 
		= \sum_{k=n+1}^\infty \frac{ \sum_i p_i^2 (1-p_i) }{k^2(k-1)} +\sum_{m=2}^n \sum_{k=n+1}^\infty \frac{ m \sum_i p_i^2 (1-p_i)^m }{k^2(k-m)} 
	\end{equation}

	For the $m=1$ term, 
	\begin{equation}
		\sum_{k=n+1}^\infty \frac{ \sum_i p_i^2 (1-p_i) }{k^2(k-1)}
		\leq \sum_{k=n+1}^\infty \frac{1}{k^2(k-1)}
		\leq \int_{n}^\infty \frac{1}{x^2(x-1)} \,dx \lesssim \frac{1}{n^2}~.
	\end{equation}
	For terms where $m= 2,\dots, n$, start by considering the inner summation over $k$.
	We use the fact that $\sum_{i=n+1}^\infty f(i) \leq f(n+1) +\int_{n+1}^\infty f(x)\,dx$ for decreasing $f$ so that
	\begin{align}
		&\sum_{k=n+1}^\infty \frac{ m }{k^2(k-m)} 
		\leq \frac{m}{(n+1)^2 (n+1-m)} +\int_{n+1}^\infty \frac{m}{x^2(x-m)} \,dx \\
		&=  \frac{m}{(n+1)^3 \paren{ 1-\frac{m}{n+1}}} -\frac{1}{m} \log\paren{ 1-\frac{m}{n+1}} -\frac{1}{n+1} \\
		&= \frac{1}{(n+1)^2} \sum_{k=1}^\infty \paren{\frac{m}{n+1}}^k +\frac{1}{n+1} \sum_{k=1}^\infty \frac{1}{k+1} \paren{\frac{m}{n+1}}^k \label{line:geomlog2} ~.
	\end{align}
	The equality from line~\ref{line:geomlog2} uses the geometric series on the first summation.
	For the second summation, it applies a Taylor expansion then cancels the $k=1$ summand with $1/(n+1)$.
	Now, applying Proposition~\ref{lem:missingmass}, the inequality above, and an integral to bound the summation,
	\begin{align}
		&\sum_{m=2}^n \sum_{k=n+1}^\infty \frac{ m \sum_i p_i^2 (1-p_i)^m }{k^2(k-m)} 
		\leq \sum_{m=2}^n \sum_{k=1}^\infty \brac{ \frac{ Cm^{k-2+\alpha} }{(n+1)^{k+2}} +\frac{C m^{k-2+\alpha}}{(k+1) (n+1)^{k+1}} } \\
		&\leq C \int_1^n \sum_{k=1}^\infty \brac{ \frac{x^{k-2+\alpha} }{(n+1)^{k+2}} +\frac{x^{k-2+\alpha}}{(k+1) (n+1)^{k+1}} } \,dx \\
		&= C\int_1^n \brac{ \frac{x^{-1+\alpha} }{(n+1)^{3}} +\frac{x^{-1+\alpha}}{2(n+1)^{2}} } \,dx +C \sum_{k=1}^\infty \int_1^n \brac{ \frac{x^{k-1+\alpha} }{(n+1)^{k+3}} +\frac{x^{k-1+\alpha}}{(k+2)(n+1)^{k+2}} } \,dx \label{line:firstsep2} \\
		&\leq C \int_1^n \frac{x^{-1+\alpha}}{(n+1)^{2}} \,dx +C \sum_{k=1}^\infty \brac{ \frac{n^{k+\alpha} }{(k+\alpha) (n+1)^{k+3}} +\frac{n^{k+\alpha}}{ (k+\alpha) (k+2) n^{k+2}} } \\
		&\leq C \frac{I(\alpha=0) \log n +I(\alpha\neq 0) n^\alpha }{ (n+1)^2 } +\frac{C\log n }{n^{3-\alpha}} +\frac{C \zeta(2) }{n^{2-\alpha}} \label{line:summations}
		\lesssim \frac{1}{n^{2-\alpha}} +\frac{\log n}{n^2}~.
	\end{align}
	The equality from line~\ref{line:firstsep2} pulls the $k=1$ summand out of the summation then re-indexes the summation.
	The inequality from line~\ref{line:summations} evaluates the first integral from the previous line separately when $\alpha=0$ and $\alpha \in (0,1)$.
	For the first term of the sum, it factors out $n^\alpha$ from the numerator and $(n+1)^3$ from the denominator then uses the Taylor series after setting $\alpha$ to zero; for the second, it cancels like terms in the numerator and denominator then bounds the remaining sum by $\zeta(2)$.
\end{proof}

\begin{prop}\label{covarMulti}
	Let $(M, K, n-M-K)$ be a multinomial random vector of $n$ trials with probabilities $(p, q, 1-p-q)$.
	Then
	\begin{align}
		&\cov\brac{ J(n)-J(M), J(n)-J(K) } \\
		&= -\sum_{m=n+1}^\infty \frac{ \brac{1 -(1-p)^m} \brac{ 1-(1-q)^m } }{m^2}+\frac{ (1-p)^m \log(1-p) +(1-q)^m \log(1-q) }{m} \\
		&\quad +\sum_{m=n+1}^\infty \frac{ \mathcal O(p(1-p)^{m-1} (1-q)^m) + \mathcal O(q(1-q)^{m-1} (1-p)^m) }{m}~.
	\end{align}
\end{prop}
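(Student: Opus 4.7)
The plan is to compute the joint expectation and the product of marginal expectations separately, then simplify their difference. Begin by writing $\cov[J(n)-J(M), J(n)-J(K)] = \E[(J(n)-J(M))(J(n)-J(K))] - \E[J(n)-J(M)]\E[J(n)-J(K)]$. Each marginal of the multinomial is itself binomial with $n$ trials, so Proposition \ref{prop:mathind} gives $\E[J(n)-J(M)]\E[J(n)-J(K)] = \log p \log q + \log p \cdot R(q) + \log q \cdot R(p) + R(p) R(q)$, where $R(p) := \sum_{m=n+1}^\infty (1-p)^m/m$.

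The joint expectation is the main piece: apply Proposition \ref{prop:multiProd}, which---in analogy to Proposition \ref{prop:scndmnt}---yields an exact algebraic identity for $\E[(J(n)-J(M))(J(n)-J(K))]$ as a truncated double sum resembling the Cauchy product of the two log Taylor series, plus boundary terms. Completing the truncated Cauchy product to its full infinite form and invoking Proposition \ref{prop:sumtoint} to identify the resulting infinite double sum in closed form recognizes the joint expectation as $\log p \log q$ plus a combination of dilogarithm and log-log contributions, minus tail sums indexed by $m \geq n+1$.

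Subtracting the product of marginals then cancels the $\log p \log q$ term, and the dilogarithm reflection identity built into Proposition \ref{prop:sumtoint} arranges for the $\li2$ and $\log(1-p)\log(1-q)$ pieces to cancel against the expansion of $R(p) R(q)$ together with the cross terms $\log p \cdot R(q)$ and $\log q \cdot R(p)$. What remains are only sums over $m \geq n+1$. Regrouping with the identity $1 - (1-p)^m = -\sum_{k \geq 1} \binom{m}{k}(-p)^k$ produces the first displayed sum $-\sum_{m=n+1}^\infty [1-(1-p)^m][1-(1-q)^m]/m^2$, while applying $-\log(1-p) = \sum_{k\geq 1} p^k/k$ to the residual cross interactions accounts for the $(1-p)^m \log(1-p)/m$ tail; the higher-order coefficients in that Taylor expansion gather into the stated $\mathcal O(p(1-p)^{m-1}(1-q)^m)$ and $\mathcal O(q(1-q)^{m-1}(1-p)^m)$ residuals.

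The chief obstacle is the bookkeeping: Propositions \ref{prop:multiProd} and \ref{prop:sumtoint} together generate many algebraic pieces---truncated double sums, log-log terms, and dilogarithm contributions---most of which must cancel against corresponding pieces in the product of marginals. Partitioning the index regions carefully (the bulk $1 \leq k \leq m \leq n$, the boundary strip $1 \leq k \leq n < m$, and the tail $n < k \leq m$) and tracking the residual cross interactions between $\log(1-p)$ and powers of $(1-q)$ is delicate, but the payoff is that all finite partial sums cancel exactly, leaving only the three tail contributions claimed.
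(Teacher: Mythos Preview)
Your plan is correct and follows essentially the same route as the paper: apply Proposition~\ref{prop:multiProd} for the joint expectation, complete the truncated double sum via Proposition~\ref{prop:sumtoint}, subtract the product of marginals from Proposition~\ref{prop:mathind}, and watch the finite partial sums and dilogarithm pieces cancel to leave only tail sums over $m\ge n+1$. One small correction on mechanism: the $\mathcal O(p(1-p)^{m-1}(1-q)^m)$ residuals do not arise from higher-order Taylor coefficients of $\log(1-p)$ but from expanding $(1-p-q)^k=[(1-p)(1-q)-pq]^k$ as $[(1-p)(1-q)]^k+\mathcal O\bigl(k[(1-p)(1-q)]^{k-1}pq\bigr)$ in the tail double sum, then collapsing the inner geometric series.
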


\begin{proof}
	Consider the joint expectation.
	Begin by applying Proposition~\ref{prop:multiProd} to simplify:
	\begin{align}
		&\E\brac{ [J(n)-J(M)] [J(n)-J(K)] } \\
		&= \sum_{m=1}^n \sum_{k=1}^m \frac{ (1-p)^m \paren{1-\frac{q}{1-p}}^k + (1-q)^m \paren{1-\frac{p}{1-q}}^k }{mk} \label{line:compTaylor} 
		+\sum_{m=1}^n \frac{ 1-(1-p)^m -(1-q)^m}{m^2} \\
		&= -\sum_{m=1}^n \frac{ (1-p)^m \brac{\log(q) -\log(1-p)} + (1-q)^m \brac{ \log(p) -\log(1-q)} }{m} \label{line:sumtoint} \\
		&\quad -\sum_{m=1}^n \sum_{k=m+1}^\infty \frac{ (1-p)^m \paren{1-\frac{q}{1-p}}^k + (1-q)^m \paren{1-\frac{p}{1-q}}^k }{mk} \label{line:sumtoint2} \\
		&\quad +\sum_{m=1}^n \frac{ 1-(1-p)^m -(1-q)^m}{m^2} \\
		&= -\log(p) \log(q) -\sum_{m=1}^n \frac{ (1-p)^m \log(q) + (1-q)^m \log(p) }{m} \label{line:propSumtoint} \\
		&\quad +\sum_{m=n+1}^\infty \sum_{k=m+1}^\infty \frac{ (1-p)^m \paren{1-\frac{q}{1-p}}^k + (1-q)^m \paren{1-\frac{p}{1-q}}^k  }{mk} \label{line:upperRemain} \\
		&\quad +\sum_{m=1}^n \frac{ (1-p)^m \log(1-p) + (1-q)^m \log(1-q) }{m} \label{line:partLog} \\
		&\quad +\log(p)\log(1-p) +\log(q)\log(1-q) \\ %\label{line:loglog} 
		&\quad +\sum_{m=1}^n \frac{ 1-(1-p)^m -(1-q)^m}{m^2} -\zeta(2) +\li2(1-p) +\li2(1-q) \label{line:dilog} ~.
	\end{align}
	Line~\ref{line:sumtoint} extends the upper limit of the inner summation from line~\ref{line:compTaylor} to $\infty$, writes the two corresponding Taylor series as natural logarithms, then uses the logarithm quotient rule; line~\ref{line:sumtoint2} subtracts the added terms.
	The equality from line~\ref{line:propSumtoint} follows by applying Proposition~\ref{prop:sumtoint} to the double summation of line~\ref{line:sumtoint2} and rearranging terms.

	We simplify each line separately.
	Starting with line~\ref{line:propSumtoint}, write each natural logarithm as a Taylor series then cancel terms:
	\begin{align}
		&-\log(p) \log(q) -\sum_{m=1}^n \frac{ (1-p)^m \log(q) + (1-q)^m \log(p) }{m} \\
		&= -\sum_{m=1}^\infty \sum_{k=1}^\infty \frac{ (1-p)^m (1-q)^k }{mk} +\sum_{m=1}^n \sum_{k=1}^\infty \frac{ (1-p)^m (1-q)^k + (1-q)^m (1-p)^k }{mk} \\
		&= \sum_{m=1}^n \sum_{k=1}^n \frac{ (1-p)^m (1-q)^k }{mk} -\sum_{m=n+1}^\infty \sum_{k=n+1}^\infty \frac{ (1-p)^m (1-q)^k }{mk} ~.
	\end{align}

	Moving to line~\ref{line:upperRemain}, note that $(a-b)^k =a^k +\mathcal O(ka^{k-1}b)$ for $a>b$ with $a=(1-p)(1-q)$ and $b=pq$, so that
	\begin{align}
		&\sum_{m=n+1}^\infty \sum_{k=m+1}^\infty \frac{ (1-p)^m \paren{1-\frac{q}{1-p}}^k }{mk} 
		=\sum_{m=n+1}^\infty \sum_{k=m+1}^\infty \frac{ (1-p)^{m-k} (1-p-q)^k }{mk} \\
		%&= \sum_{m=n+1}^\infty \sum_{k=m+1}^\infty \frac{ (1-p)^{m-k} \brac{(1-p)^k(1-q)^k +\mathcal O\paren{ kpq(1-p)^{k-1}(1-q)^{k-1} } } }{mk} \\
		&= \sum_{m=n+1}^\infty \sum_{k=m+1}^\infty \frac{ (1-p)^m (1-p)^k }{mk} +\frac{ \mathcal O\paren{ pq(1-p)^{m-1}(1-q)^{k-1} } }{m} \\
		&= \sum_{m=n+1}^\infty \sum_{k=m+1}^\infty \frac{ (1-p)^m (1-q)^k }{mk} +\sum_{m=n+1}^\infty \frac{ \mathcal O\paren{ p(1-p)^{m-1}(1-q)^m } }{m} 
	\end{align}
	where the last equality uses the geometric series identity.
	Using this and re-indexing, we have
	\begin{align}
		&\sum_{m=n+1}^\infty \sum_{k=m+1}^\infty \frac{ (1-p)^m \paren{1-\frac{q}{1-p}}^k + (1-q)^m \paren{1-\frac{p}{1-q}}^k  }{mk} \label{line:upperRemain2} \\
		&= \sum_{m=n+1}^\infty \sum_{k=m+1}^\infty \frac{  (1-p)^m (1-q)^k +(1-q)^m (1-p)^k }{mk} \\
		&\quad +\sum_{m=n+1}^\infty \frac{ \mathcal O\paren{ p(1-p)^{m-1}(1-q)^m } + \mathcal O\paren{ q(1-q)^{m-1}(1-p)^m } }{m} \\
		&= \sum_{m=n+1}^\infty \sum_{k=n+1}^\infty \frac{ (1-p)^m (1-q)^k }{mk} -\sum_{m=n+1}^\infty \frac{(1-p)^m(1-q)^m}{m^2} \\
		&\quad +\sum_{m=n+1}^\infty \frac{ \mathcal O\paren{ p(1-p)^{m-1}(1-q)^m } + \mathcal O\paren{ q(1-q)^{m-1}(1-p)^m } }{m} ~.
	\end{align}

	Moving to line~\ref{line:partLog}, %~and~\ref{line:loglog},
	write $\log(p)$ and $\log(q)$ as Taylor series then cancel terms:
	\begin{align}
		&\sum_{m=1}^n \frac{ (1-p)^m \log(1-p) + (1-q)^m \log(1-q) }{m} 
		+\log(p)\log(1-p) +\log(q)\log(1-q) \\
		%&= \sum_{m=1}^n \frac{ (1-p)^m \log(1-p) + (1-q)^m \log(1-q) }{m} \\
		%&\quad -\sum_{m=1}^\infty \frac{ (1-p)^m \log(1-p) +(1-q)^m \log(1-q) }{m} \\
		&= -\sum_{m=n+1}^\infty \frac{ (1-p)^m \log(1-p) +(1-q)^m \log(1-q) }{m} ~.
	\end{align}

	Last, line~\ref{line:dilog}, writing the zeta function and dilogarithm functions as summations then canceling terms,
	\begin{align}
		&\sum_{m=1}^n \frac{ 1-(1-p)^m -(1-q)^m}{m^2} -\zeta(2) +\li2(1-p) +\li2(1-q) \\
		&= \sum_{m=1}^n \frac{ 1-(1-p)^m -(1-q)^m}{m^2} -\sum_{m=1}^\infty \frac{ 1-(1-p)^m -(1-q)^m}{m^2} \\
		&= -\sum_{m=n+1}^\infty \frac{ 1-(1-p)^m -(1-q)^m}{m^2} ~.
	\end{align}

	Now we move to the covariance, with the calculations above to simplify the joint expectation and Proposition~\ref{prop:mathind} to simplify the product of expectations, we have
	\begin{align}
		&\cov\brac{ J(n)-J(M), J(n)-J(K) } \\
		&= \E\brac{ [J(n)-J(M)] [J(n)-J(K)] } - \E\brac{ J(n)-J(M) } \E\brac{ J(n)-J(K) } \label{line:covExpt} \\ 
		&= \sum_{m=1}^n \sum_{k=1}^n \frac{ (1-p)^m (1-q)^k }{mk} -\sum_{m=n+1}^\infty \frac{ 1 - (1-p)^m -(1-q)^m +(1-p)^m(1-q)^m}{m^2} \\
		&\quad +\sum_{m=n+1}^\infty \frac{ \mathcal O\paren{ p(1-p)^{m-1}(1-q)^m } + \mathcal O\paren{ q(1-q)^{m-1}(1-p)^m } }{m} \\
		&\quad -\sum_{m=n+1}^\infty \frac{ (1-p)^m \log(1-p) +(1-q)^m \log(1-q) }{m} -\sum_{m=1}^n \sum_{k=1}^n \frac{ (1-p)^m (1-q)^k }{mk} \\
		&= -\sum_{m=n+1}^\infty \frac{ \brac{1 -(1-p)^m} \brac{ 1-(1-q)^m } }{m^2}+\frac{ (1-p)^m \log(1-p) +(1-q)^m \log(1-q) }{m} \\
		&\quad +\sum_{m=n+1}^\infty \frac{ \mathcal O(p(1-p)^{m-1} (1-q)^m) + \mathcal O(q(1-q)^{m-1} (1-p)^m) }{m}~.
	\end{align}
\end{proof}

\begin{prop}\label{prop:multiProd}
	Let $(M, K, n-M-K)$ be a multinomial random vector of $n$ trials with probabilities $(p, q, 1-p-q)$.
	Then
	\begin{align}
		&\E\brac{ \brac{ J(n)-J(M)} \brac{ J(n)-J(K) } } \\
		&= \sum_{m=1}^n \sum_{k=1}^m \frac{ (1-q)^m \paren{1-\frac{p}{1-q}}^k + (1-p)^m \paren{1-\frac{q}{1-p}}^k }{mk} +\sum_{m=1}^n \frac{ 1-(1-p)^m -(1-q)^m}{m^2}~.
	\end{align}
\end{prop}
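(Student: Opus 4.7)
The plan is to prove this identity by induction on $n$, following the strategy used for Proposition \ref{prop:scndmnt}. Writing $r := 1-p-q$ and $S_n := \E[(J(n)-J(M))(J(n)-J(K))]$, we first verify the base case $n = 1$ by direct computation: $(M,K)$ takes values $(1,0)$, $(0,1)$, $(0,0)$ with probabilities $p$, $q$, $r$, so $S_1 = r$, which matches the RHS at $n=1$.

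For the inductive step, we would decompose $S_{n+1}$ by conditioning on the category of the $(n+1)$-st trial: with probabilities $p$, $q$, $r$, the new trial either increments $M$, increments $K$, or leaves both unchanged. Applying $J(n+1) = J(n) + \tfrac{1}{n+1}$ and $J(m+1) = J(m) + \tfrac{1}{m+1}$ in each case and expanding the resulting products yields a recursion of the form
\begin{align}
S_{n+1} &= S_n + \frac{\E[J(n)-J(M)] + \E[J(n)-J(K)]}{n+1} + \frac{1}{(n+1)^2} \\
&\quad - p\,\E\brac{\frac{J(n)-J(K)}{M+1}} - q\,\E\brac{\frac{J(n)-J(M)}{K+1}} - \frac{p\,\E\brac{\frac{1}{M+1}} + q\,\E\brac{\frac{1}{K+1}}}{n+1}.
\end{align}
The single-coordinate expectations $\E[J(n)-J(M)]$ and $\E[J(n)-J(K)]$ are available from Proposition \ref{prop:mathind}, and $\E[1/(M+1)]$ follows from a standard binomial identity. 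The delicate joint terms $\E[(J(n)-J(K))/(M+1)]$ and $\E[(J(n)-J(M))/(K+1)]$ we would handle by conditioning on the denominator coordinate: given $M$, the variable $K$ is $\mathrm{Binomial}(n-M, q/(1-p))$, so Proposition \ref{prop:mathind} supplies $\E[J(n)-J(K)\mid M]$ in closed form, and the remaining outer expectation over $M$ reduces to binomial sums that can be evaluated with Propositions \ref{propHarmonicBinomial} and \ref{propReciprocalBinomial}.

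The main obstacle will be the algebraic reconciliation of this recursion with the increment $\mathrm{RHS}(n+1) - \mathrm{RHS}(n)$ of the target. The RHS increment consists of the $m = n+1$ row of each triangular double sum together with the diagonal term at $m = n+1$, whereas the recursion produces a profusion of $(1-p)^i(1-q)^j$ cross-terms that must telescope and re-index into this clean triangular form. This mirrors the truncated-Cauchy-product step at the close of the proof of Proposition \ref{prop:scndmnt}, now applied simultaneously to the two power series associated with $-\log(1-p)$ and $-\log(1-q)$. The bookkeeping will be mechanical but extensive; once properly organized, the induction closes and the identity follows.
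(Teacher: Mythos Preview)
Your approach is correct and shares the paper's core ingredients: induction on $n$, the conditional law $K\mid M\sim\text{Binomial}(n-M,\,q/(1-p))$, and the auxiliary identities of Propositions~\ref{propHarmonicBinomial} and~\ref{propReciprocalBinomial}. The organizational difference is that the paper applies the conditional simplification \emph{before} inducting: it first collapses $S_n$ to the single-sum form
\[
S_n=\sum_{m=0}^n\bigl[J(n)-J(m)\bigr]\Bigl(J(n)-\sum_{k=1}^{n-m}\tfrac{1-(1-q/(1-p))^k}{k}\Bigr)\binom{n}{m}p^m(1-p)^{n-m},
\]
and then runs the Pascal-identity induction on this expression. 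The payoff is that the resulting recursion increment $S_{n+1}-S_n$ emerges directly in the target shape $(1-q)^{n+1}(1-p/(1-q))^k+(1-p)^{n+1}(1-q/(1-p))^k$ after a single application of Proposition~\ref{propHarmonicBinomial}, so the ``profusion of cross-terms'' you anticipate never materializes. Your route---induct first, then evaluate the mixed term $\E[(J(n)-J(K))/(M+1)]$ via conditioning---does the same work in the opposite order and will close, but the bookkeeping is heavier because the conditional simplification is deferred to a subexpression rather than applied globally.
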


\begin{proof}[Proof]
	Using Proposition~\ref{prop:mathind} on line~\ref{line:partialJoint},
	\begin{align}
		&\E\brac{ \brac{J(n)-J(M)} \brac{J(n)-J(K)} } \\
		&= \sum_{m=0}^n \sum_{k=0}^{n-m} \brac{J(n)-J(m)} \brac{J(n)-J(k)} \binom{n}{m,k} p^m q^k (1-p-q)^{n-m-k} \\
		&= \sum_{m=0}^n \brac{J(n)-J(m)} \binom{n}{m} p^m(1-p)^{n-m} \\
		&\quad \times \sum_{k=0}^{n-m} \brac{J(n)-J(k)} \binom{n-m}{k} \paren{\frac{q}{1-p}}^k \paren{1-\frac{q}{1-p}}^{n-m-k} \label{line:partialJoint} \\
		&= \sum_{m=0}^n \brac{J(n)-J(m)} \paren{J(n)-\sum_{k=1}^{n-m} \frac{1-\paren{1-\frac{q}{1-p}}^k}{k} } \binom{n}{m} p^m(1-p)^{n-m}~.\label{line:jointExp1Simp}
	\end{align}

	Proceeding with induction on the simplified joint expectation, let $S_n$ equal line~\ref{line:jointExp1Simp}.
	We start by breaking $S_{n+1}$ into four terms with the fact that $J(n+1) = J(n)+\frac{1}{n+1}$.
	The details underlying the simplification of line~\ref{maincov} are below.
	\begin{align}
	&S_{n+1} = \sum_{m=0}^{n+1} \brac{J(n+1)-J(m)} \paren{ J(n+1) -\sum_{k=1}^{n+1-m} \frac{1-\paren{1-\frac{q}{1-p}}^k}{k} } \\
	&\quad\quad\quad \times \binom{n+1}{m} p^m(1-p)^{n+1-m} \\
	&= \sum_{m=0}^{n+1} \brac{J(n)-J(m)} \paren{J(n)-\sum_{k=1}^{n+1-m} \frac{1-\paren{1-\frac{q}{1-p}}^k}{k} } \binom{n+1}{m} p^m(1-p)^{n+1-m} \label{maincov} \\
	&\quad +\frac{1}{n+1} \sum_{m=0}^{n+1} \paren{J(n)-\sum_{k=1}^{n+1-m} \frac{1-\paren{1-\frac{q}{1-p}}^k}{k} } \binom{n+1}{m} p^m(1-p)^{n+1-m} \\
	&\quad +\frac{1}{n+1} \sum_{m=0}^{n+1} \brac{J(n)-J(m)} \binom{n+1}{m} p^m (1-p)^{n+1-m} +\frac{1}{(n+1)^2} \\
	&= S_n +\frac{1}{n+1} \sum_{m=0}^{n+1} \brac{J(n)-J(m)} \paren{1-\frac{q}{1-p}}^{n+1-m}  \binom{n+1}{m} p^m(1-p)^{n+1-m} \label{line:genmathind} \\
	&\quad +\frac{(1-p)^{n+1}}{n+1} \paren{ J(n) -\sum_{k=1}^{n+1} \frac{1-\paren{1-\frac{q}{1-p}}^k}{k} } +\frac{1}{(n+1)^2} \\
	&= S_n +\frac{1}{n+1} \sum_{k=1}^{n+1} \frac{ (1-q)^{n+1} \paren{1-\frac{p}{1-q}}^k +(1-p)^{n+1} \paren{1-\frac{q}{1-p}}^k}{k} \\
	&\quad +\frac{1 -(1-p)^{n+1} -(1-q)^{n+1} }{(n+1)^2}~.
	\end{align}
	The penultimate equality cancels terms using the details provided below and the last equality uses Proposition~\ref{propHarmonicBinomial} on the summation term of line~\ref{line:genmathind}.

	With induction using the fact that $S_1=1-p-q$, we arrive at the desired claim:
	\begin{align}
		S_n
		&= \sum_{m=1}^n \sum_{k=1}^m \frac{ (1-q)^m \paren{1-\frac{p}{1-q}}^k + (1-p)^m \paren{1-\frac{q}{1-p}}^k }{mk} \\
		&\quad +\sum_{m=1}^n \frac{ 1-(1-p)^m -(1-q)^m}{m^2}~.
	\end{align}

	Expanding on the calculations from line~\ref{maincov}, we use the identity $\binom{n+1}{m} =\binom{n}{m} +\binom{n}{m-1}$, re-indexing, and Proposition~\ref{propReciprocalBinomial},
	\begin{align}
	&\sum_{m=0}^{n+1} \brac{J(n)-J(m)} \paren{J(n)-\sum_{k=1}^{n+1-m} \frac{1-\paren{1-\frac{q}{1-p}}^k}{k} } \\
	&\quad \brac{ \binom{n}{m} +\binom{n}{m-1}} p^m(1-p)^{n+1-m} \\
	&= \sum_{m=0}^{n} \brac{J(n)-J(m)} \paren{J(n)-\sum_{k=1}^{n+1-m} \frac{1-\paren{1-\frac{q}{1-p}}^k}{k} } \binom{n}{m} p^m(1-p)^{n+1-m} \\
	&\quad +\sum_{m=0}^{n} \brac{J(n)-J(m+1)} \paren{J(n)-\sum_{k=1}^{n-m} \frac{1-\paren{1-\frac{q}{1-p}}^k}{k} } \binom{n}{m} p^{m+1} (1-p)^{n-m} \\
	&= \sum_{m=0}^{n} \brac{J(n)-J(m)} \paren{J(n)-\sum_{k=1}^{n-m} \frac{1-\paren{1-\frac{q}{1-p}}^k}{k} } \binom{n}{m} p^m(1-p)^{n+1-m} \\
	&\quad -\sum_{m=0}^{n} \brac{J(n)-J(m)} \paren{\frac{1-\paren{1-\frac{q}{1-p}}^{n+1-m}}{n+1-m} } \binom{n}{m} p^m(1-p)^{n+1-m} \\
	&\quad +\sum_{m=0}^{n} \brac{J(n)-J(m)} \paren{J(n)-\sum_{k=1}^{n-m} \frac{1-\paren{1-\frac{q}{1-p}}^k}{k} } \binom{n}{m} p^{m+1} (1-p)^{n-m} \\
	&\quad -\sum_{m=0}^{n} \frac{1}{m+1} \paren{J(n)-\sum_{k=1}^{n-m} \frac{1-\paren{1-\frac{q}{1-p}}^k}{k} } \binom{n}{m} p^{m+1} (1-p)^{n-m} \\
	&= S_n -\frac{1}{n+1} \sum_{m=0}^{n} \brac{J(n)-J(m)} \paren{1-\paren{1-\frac{q}{1-p}}^{n+1-m} } \binom{n+1}{m} p^m(1-p)^{n+1-m} \\
	&\quad -\frac{1}{n+1} \sum_{m=0}^{n} \paren{J(n)-\sum_{k=1}^{n-m} \frac{1-\paren{1-\frac{q}{1-p}}^k}{k} } \binom{n+1}{m+1} p^{m+1} (1-p)^{n-m} \\
	&= S_n -\frac{1}{n+1} \sum_{m=0}^{n+1} \brac{J(n)-J(m)} \binom{n+1}{m} p^m(1-p)^{n+1-m} \\
	&\quad +\frac{1}{n+1} \sum_{m=0}^{n+1} \brac{J(n)-J(m)} \paren{1-\frac{q}{1-p}}^{n+1-m}  \binom{n+1}{m} p^m(1-p)^{n+1-m} \\
	&\quad -\frac{1}{n+1} \sum_{m=0}^{n+1} \paren{J(n)-\sum_{k=1}^{n+1-m} \frac{1-\paren{1-\frac{q}{1-p}}^k}{k} } \binom{n+1}{m} p^{m} (1-p)^{n+1-m} \\
	&\quad +\frac{(1-p)^{n+1}}{n+1} \paren{ J(n) -\sum_{k=1}^{n+1} \frac{1-\paren{1-\frac{q}{1-p}}^k}{k} } ~.
	\end{align}
\end{proof}

\begin{prop} \label{prop:addterms}
	Let $m_X$ be $\text{Binomial}(n, p(X))$ given $X$ and $p \lesssim j^{-1/\alpha}$.
	Then
	\begin{align}
		\E_X \brac{ p_X \paren{ \var\brac{\frac{1}{m_X+1}} -2\cov\brac{ J(n) -J(m_X), \frac{1}{m_X+1}} } } 
		= \mathcal O\paren{\frac{\log n}{n^{2-\alpha}} } ~.
	\end{align}
\end{prop}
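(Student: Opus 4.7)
The plan is to first collapse the bracketed variance-plus-covariance expression into a single difference of variances, then convert that difference via a Bernoulli coupling, and finally bound the resulting sums term-by-term using the algebraic identities already established in the paper.

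The reduction step is the elementary identity $\var[A] - 2\cov[A,B] = \var[A-B] - \var[B]$, applied with $A = 1/(m_X+1)$ and $B = J(n) - J(m_X)$. Because $J(m_X+1) = J(m_X) + 1/(m_X+1)$ and $\var$ is translation invariant, the bracket in the proposition collapses to $\var[J(m_X+1)] - \var[J(m_X)]$. To convert the $p_X$ prefactor into a comparison between two bona fide binomial counts, I would introduce a coupling: writing $\tilde m_X = m_X + B_X \sim \text{Binomial}(n+1, p_X)$ with $B_X \sim \text{Bernoulli}(p_X)$ independent of $m_X$, the law of total variance conditioning on $B_X$, combined with $\E[J(m_X+1)] - \E[J(m_X)] = \E[1/(m_X+1)]$, yields
\begin{equation}
    p_X\paren{\var[J(m_X+1)] - \var[J(m_X)]} = \var[J(\tilde m_X)] - \var[J(m_X)] - p_X(1-p_X)\paren{\E\brac{1/(m_X+1)}}^2~.
\end{equation}
Since $\E_X[p_X G(p_X)] = \sum_j p_j^2 G(p_j)$, the target expectation splits into $\sum_j p_j[\var[J(\tilde m_j)] - \var[J(m_j)]]$ minus $\sum_j p_j^2(1-p_j)(\E[1/(m_j+1)])^2$. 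The second sum is controlled using the closed form $\E[1/(m_j+1)] = (1-(1-p_j)^{n+1})/((n+1)p_j)$ together with $1 - (1-p_j)^{n+1} \leq \min(1, (n+1)p_j)$; splitting at the threshold $p_j \asymp 1/n$ (i.e., $j \asymp n^\alpha$) gives $\sum_j (1-(1-p_j)^{n+1})^2 \lesssim n^\alpha$, so this contribution is $O(n^{\alpha-2})$.

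The first sum is evaluated by applying Proposition~\ref{prop:scndmnt} at $N = n+1$ and $N = n$, along with Proposition~\ref{prop:mathind} for the squared-mean correction. After telescoping (using identities such as $\sum_{k=1}^n 1/(k(n+1-k)) = 2J(n)/(n+1)$), the difference $\var[J(\tilde m_j)] - \var[J(m_j)]$ reduces to an explicit combination of boundary terms involving $(1-p_j)^{n+1}$, $(1-p_j)^{n+1}\log p_j$, and a convolution-type remainder $\sum_{m'=1}^n (1-p_j)^{m'}/(n-m'+1)$. Summing against $p_j$ and invoking Proposition~\ref{lem:missingmass}, the bound $\sum_j p_j(1-p_j)^{n+1} \lesssim n^{\alpha-1}$ handles constant-weighted and $J(n)$-weighted terms, the auxiliary estimate $\sum_j p_j(1-p_j)^{n+1}|\log p_j| \lesssim n^{\alpha-1}\log n$ handles the logarithmic terms, and swapping summations in the convolution piece reduces it to $\frac{1}{n+1}\sum_{m'=1}^n (m')^{\alpha-1}/(n-m'+1)$, bounded by $O(\log n/n^{2-\alpha})$ through an integral comparison split at $m' = n/2$, mirroring the estimates in Proposition~\ref{prop:varremainders}.

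The main obstacle will be the bookkeeping in the telescoping step: the variance expansions at $N = n+1$ and $N = n$ each contain two double sums and a squared mean, so their difference produces many candidate remainder terms, and each must be verified to fit within the target $O(\log n/n^{2-\alpha})$ rate. The convolution-type remainder saturates this rate, so the analysis admits little slack; the key structural insight enabling the proof is that the variance-of-difference identity allows Proposition~\ref{prop:scndmnt} to be applied directly after the Bernoulli coupling collapses the covariance into a clean variance comparison.
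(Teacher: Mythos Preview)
Your approach is correct and genuinely different from the paper's. The paper treats the two pieces separately: it computes $\E[(M+1)^{-2}]$ and $\E[(J(n)-J(M))/(M+1)]$ directly via Proposition~\ref{propReciprocalBinomial} and Proposition~\ref{prop:mathind}, subtracts the corresponding squared/product means, and then bounds $\E_X[p_X\var(\cdot)]$ and $\E_X[p_X\cov(\cdot,\cdot)]$ individually by the same convolution sum $\tfrac{1}{n+1}\sum_{m}\E[(1-p_X)^m]/(n-m+1)$ that you also reach. Your route instead collapses the bracket with $\var[A]-2\cov[A,B]=\var[A-B]-\var[B]$, recognizes $A-B=J(m_X+1)-J(n)$ via the harmonic recursion, and then uses a Bernoulli coupling so that Proposition~\ref{prop:scndmnt} at parameters $n$ and $n+1$ does all the work. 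This is conceptually cleaner because everything is expressed through the single second–moment identity the paper has already developed, rather than through ad hoc moment formulas for $1/(M+1)$.

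The tradeoff is that your telescoping hides a nontrivial cancellation: the raw difference of the second double sum in Proposition~\ref{prop:scndmnt} produces a term $\tfrac{1}{n+1}\sum_{m=1}^n (1-p_j)^m/m$, which after averaging in $j$ is of order $H(X)/n$ and would \emph{not} fit the target rate by itself; it must be combined with the $\sum_m (1-p_j)^m/[m(n+1-m)]$ piece to yield the harmless $\tfrac{1}{n+1}\sum_{m'} (1-p_j)^{n+1-m'}/m'$ you record. Your write-up lands on the correct post-cancellation remainder, so this is fine, but flag it explicitly when you expand the proof. You also need the auxiliary estimate $\sum_j p_j(1-p_j)^{n}|\log p_j|\lesssim n^{\alpha-1}\log n$; this is not stated in the paper but follows by inserting a $(-\log x)$ weight into the Riemann--Stieltjes argument of Proposition~\ref{lem:missingmass}, picking up exactly one extra $\log n$. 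The paper's direct computation sidesteps this lemma because the $(1-p)^{n+1}\sum_m(1-p)^m/m$ term it generates is multiplied by an additional $p_X$, dropping its order.
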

\begin{proof}
	Assume $M$ is $\text{Binomial}(n, p)$.
	Using Proposition \ref{propReciprocalBinomial}, we have
	\begin{align}
		&\E\brac{ \paren{ \frac{1}{M+1} }^2 }
		= \sum_{m=0}^n \paren{ \frac{1}{m+1} }^2 \binom{n}{m} p^m (1-p)^{n-m} \\
		&= \frac{1}{(n+1)p} \sum_{m=1}^{n+1} \frac{1}{m}  \binom{n+1}{m} p^m (1-p)^{n+1-m} \\
		&= \frac{1}{(n+1)p} \sum_{m=0}^n \frac{(1-p)^m}{n-m+1} -\frac{ J(n+1) (1-p)^{n+1} }{(n+1)p} ~.
	\end{align}
	Using $\E[1/(M+1)] =[1-(1-p)^{n+1}]/[(n+1)p]$, and the geometric series with $1/p =\sum_{m=0}^\infty (1-p)^m$,
	\begin{align}
		&\E_X \brac{ p_X \var\brac{\frac{1}{m_X+1}}  } 
		=\E_X \brac{ p_X \paren{ \E\brac{ \paren{ \frac{1}{m_X+1} }^2 } -\paren{\E\brac{\frac{1}{m_X+1}} }^2 }} \\
		&= \E\brac{ \frac{1}{n+1} \sum_{m=0}^n \frac{(1-p_X)^m}{n-m+1} -\frac{ J(n+1)(1-p_X)^{n+1} }{(n+1)} -\frac{ \paren{ 1 -(1-p_X)^{n+1} }^2 }{(n+1)^2 p_X } } \\
		&= \frac{1}{(n+1)^2} \sum_{m=0}^n \frac{m \E\brac{(1-p_X)^m}}{n-m+1} -\sum_{m=n+1}^\infty \frac{ \E\brac{(1-p_X)^m}}{(n+1)^2} +\mathcal{O}\paren{\frac{ J(n+1)(1-p_X)^{n+1} }{(n+1)}} \\
		&\leq \frac{1}{(n+1)^2} \sum_{m=1}^n \frac{C m^\alpha}{n-m+1} +\mathcal{O}\paren{\frac{1}{n^{2-\alpha}}} 
		\leq \frac{C}{(n+1)^2} \int_1^n \frac{x^\alpha}{n-x+1}\,dx +\mathcal{O}\paren{\frac{1}{n^{2-\alpha}}} \\
		&= \frac{C}{(n+1)^2} \int_1^n \frac{(n+1-u)^\alpha}{u}\,du +\mathcal{O}\paren{\frac{1}{n^{2-\alpha}}} 
		= \frac{Cn^\alpha}{(n+1)^2} \int_1^n \frac{1}{u}\,du +\mathcal{O}\paren{\frac{1}{n^{2-\alpha}}} \\
		&= \mathcal{O}\paren{\frac{\log n}{n^{2-\alpha}}}~.
	\end{align}

	Using Propositions~\ref{prop:mathind} and \ref{propReciprocalBinomial}, then canceling terms, we have
	\begin{align}
		&\E\brac{ \frac{ J(n) -J(M) }{M+1} }
		= \sum_{m=0}^n \frac{ J(n) -J(m) }{m+1} \binom{n}{m} p^m (1-p)^{n-m} \\
		&= \frac{1}{(n+1)p} \sum_{m=1}^{n+1} \brac{ J(n) -J(m) +\frac{1}{m} } \binom{n+1}{m} p^m (1-p)^{n+1-m} \\
		&= \frac{1}{(n+1)p} \sum_{m=1}^n \brac{ \frac{(1-p)^m}{m} +\frac{ (1-p)^m}{n-m+1} } 
		+\mathcal{O}\paren{(1-p)^{n+1}}~.
	\end{align}

	Then
	\begin{align}
		& \E\brac{ p_X \cov\brac{ J(n) -J(m_X), \frac{1}{m_X+1}} } \\
		&= \E\brac{ p_X \paren{ \E\brac{ \frac{ J(n) -J(m_X) }{m_X+1} } -\E\brac{ J(n) -J(m_X) }  \E\brac{ \frac{1}{m_X+1} } } } \\
		&= \E\brac{\frac{1}{n+1} \sum_{m=1}^n \brac{ \frac{(1-p_X)^m}{m} +\frac{ (1-p_X)^m}{n-m+1} } +\mathcal{O}\paren{(1-p_X)^{n+1}} } \\
		&\quad -\E\brac{ \sum_{m=1}^n \frac{ (1-p_X)^m }{m} \frac{ 1-(1-p_X)^{n+1}}{n+1}  } \\
		&= \frac{1}{n+1} \sum_{m=1}^n \frac{ \E\brac{ (1-p_X)^m } }{n-m+1} +\mathcal{O}\paren{(1-p_X)^{n+1}} = \mathcal{O}\paren{\frac{\log n}{n^{2-\alpha}}}~.
	\end{align}
	The bound for the last summation follows a similar logic to before.
\end{proof}

\begin{prop}[Variance Bound applying Efron-Stein] \label{thm:varbound}
	If $p_j \lesssim j^{-1/\alpha}$ for $\alpha\in (0,1)$, then
	\begin{equation}
		\var\brac{ \hat H\paren{ \mathbf X_n } } \lesssim \frac{1}{n}~.
	\end{equation}
\end{prop}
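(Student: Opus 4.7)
The plan is to apply the Efron-Stein inequality, treating $\hat H(\mathbf X_n)$ as a function of the $n$ i.i.d.\ inputs $X^{(1)},\ldots,X^{(n)}$. Let $\mathbf X_n^{(k)}$ denote the sample with $X^{(k)}$ replaced by an independent copy $X^{(k)\prime}\sim p$; by i.i.d.\ symmetry,
\begin{equation*}
	\var\brac{\hat H(\mathbf X_n)} \leq \frac{1}{2}\sum_{k=1}^n \E\brac{(\hat H(\mathbf X_n) - \hat H(\mathbf X_n^{(k)}))^2} = \frac{n}{2}\,\E\brac{(\hat H(\mathbf X_n) - \hat H(\mathbf X_n^{(1)}))^2},
\end{equation*}
so it suffices to show the single-swap mean-squared difference is $\mathcal O(n^{-2})$.

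First I would derive an explicit telescoping formula for the difference. Writing $n\hat H(\mathbf X_n) = \sum_l g(N_l)$ with $g(m) := m\brac{J(n-1) - J(m-1)}$ and $N_l$ the count of symbol $l$, the perturbation $X^{(k)} = j \mapsto X^{(k)\prime} = j' \neq j$ only alters $N_j$ and $N_{j'}$. The elementary identity $g(m) - g(m-1) = J(n-1) - J(m-1) - 1$ for $m \geq 2$, together with an $\mathcal O(1)$ correction at the boundary $m=1$ (or $N_{j'} = 0$), telescopes to
\begin{equation*}
	n\brac{\hat H(\mathbf X_n) - \hat H(\mathbf X_n^{(k)})} = J(N_{j'}) - J(N_j - 1) + r,
\end{equation*}
with $|r| \leq 2$. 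When $j = j'$ the difference is identically zero, so I may restrict attention to $j \neq j'$.

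The main step is to establish that $\E[(J(N_{j'}) - J(N_j - 1))^2] = \mathcal O(1)$ uniformly in $n$. Applying $(a-b)^2 \leq 2a^2 + 2b^2$ reduces this to bounding the two marginal second moments. Conditioning on $X^{(k)} = j$ makes $N_j - 1 \sim \text{Binomial}(n-1, p_j)$, so Proposition~\ref{prop:scndmnt} (applied with $n$ replaced by $n-1$) gives the exact expression $(\log p_j)^2$ plus two remainder double sums. Averaging those remainders over $j \sim p$ yields precisely the quantities bounded by Proposition~\ref{prop:varremainders}, both vanishing for $\alpha \in (0,1)$, so
\begin{equation*}
	\E\brac{(J(n-1) - J(N_j - 1))^2} = \E\brac{(\log p(X))^2} + o(1) = H(X)^2 + \var\brac{\log p(X)} + o(1),
\end{equation*}
which is finite because $p_j \lesssim j^{-1/\alpha}$ with $\alpha < 1$ forces $\E[(\log p(X))^2] < \infty$ by a standard integral comparison. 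The term involving $J(N_{j'})$ is handled identically after conditioning on $X^{(k)\prime} = j' \neq j$, which makes $N_{j'} \sim \text{Binomial}(n-1, p_{j'})$.

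Combining these bounds yields $\E[(\hat H(\mathbf X_n) - \hat H(\mathbf X_n^{(k)}))^2] \lesssim n^{-2}$, and Efron-Stein delivers the claimed $\var[\hat H(\mathbf X_n)] \lesssim 1/n$. The key subtlety---and main obstacle to avoid---is that a naive bound $|J(N_{j'}) - J(N_j - 1)| \leq 2J(n-1) = \mathcal O(\log n)$ would only give $\var[\hat H] \lesssim (\log n)^2/n$; obtaining the clean $1/n$ rate requires exploiting the telescoping of $g(m) - g(m-1)$ together with Proposition~\ref{prop:scndmnt}'s sharp second-moment identity to strip the superfluous logarithmic factor.
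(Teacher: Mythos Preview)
Your proposal is correct and follows the same Efron--Stein route as the paper: compute the single-swap difference exactly as $\tfrac{1}{n}\bigl[J(N_{j'}) - J(N_j-1)\bigr]$ (the paper in fact gets $r=0$ for all cases, since the identity $(m+1)J(m)-mJ(m-1)=J(m)+1$ holds down to $m=0$, but your $|r|\le 2$ is harmless), then show the resulting second moment is $\mathcal O(1)$ uniformly in $n$.

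The one place you diverge is the second-moment bound. The paper expands $\E\bigl[(J(m_Y)-J(m_X))^2\bigr]$ directly and controls it by $6\sum_{m=1}^n J(m)\,\E[(1-p_X)^m]/m$ via a separate induction (left to the reader), then finishes with Proposition~\ref{lem:missingmass}. You instead re-center through $J(n-1)$, writing the difference as $[J(n-1)-J(N_j-1)]-[J(n-1)-J(N_{j'})]$, and invoke Propositions~\ref{prop:scndmnt} and~\ref{prop:varremainders} off the shelf. Your version is tidier because it recycles identities already proved in the paper rather than introducing a new induction; the paper's version keeps this proposition self-contained but at the cost of an omitted step. Both land on the same finiteness criterion $\E[(\log p(X))^2]<\infty$. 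One small wording fix: when you write ``$(a-b)^2\le 2a^2+2b^2$ reduces this to bounding the two marginal second moments,'' make explicit that $a$ and $b$ are the \emph{centered} quantities $J(n-1)-J(\cdot)$, not $J(\cdot)$ itself---your subsequent display makes this clear, but stating it up front removes any ambiguity.
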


\begin{proof}[Proof Sketch]
	Let $\mathbf X_n = \paren{X^{(1)}, \dots, X^{(n)}}$ and $\mathbf Y_n = \paren{Y^{(1)}, \dots, Y^{(n)}}$ be two independent, random samples from $p$.
	Define $\mathbf X_n^{(i)} = \paren{X^{(1)}, \dots, X^{(i-1)}, Y^{(i)}, X^{(i+1)}, \dots, X^{(n)}}$ to be the sample sharing all points with $\mathbf X_n$ except with $X^{(i)}$ replaced by $Y^{(i)}$.
	Let $\hat H_n\paren{\mathbf X_n }$ be the entropy estimate on $\mathbf X_n$ and $\hat H\paren{\mathbf X_n^{(i)}}$ be the entropy estimate on $\mathbf X_n^{(i)}$.
	The Efron-Stein inequality~\cite[Thm. 3.1]{boucheron2013concentration} bounds variance by bounding $\brac{ \hat H_n\paren{\mathbf X_n} - \hat H\paren{\mathbf X_n^{(i)}} }^2$ for each $i$.

	For $j\in\N$, let $m_j\paren{\mathbf X_n}$ be the count of $j$ in $\mathbf X_n$ and $m_j\paren{\mathbf X_n^{(i)}}$ be the count of $j$ in $\mathbf X_n^{(i)}$.
	If $j \not\in \curly{ X^{(i)}, Y^{(i)} }$, then $m_j\paren{\mathbf X_n} = m_j\paren{\mathbf X_n^{(i)}}$.\\ \noindent
	\textbf{Case 1:} $X^{(i)} = Y^{(i)}$.\\
	Then $m_{X^{(i)}} \paren{\mathbf X_n} = m_{Y^{(i)}}\paren{\mathbf X_n^{(i)}}$; and consequently,
	\begin{equation}
		\hat H_n\paren{\mathbf X_n} - \hat H\paren{\mathbf X_n^{(i)}} = 0~.
	\end{equation}
	\textbf{Case 2:} $X^{(i)}\neq Y^{(i)}$.\\
	Then $m_{X^{(i)}}\paren{\mathbf X_n} >0$, and $ m_{X^{(i)}}\paren{\mathbf X_n^{(i)}} =m_{X^{(i)}}\paren{\mathbf X_n} -1$ and $m_{Y^{(i)}}\paren{\mathbf X_n^{(i)}} = m_{Y^{(i)}}\paren{\mathbf X_n}+1$, so that
	\begin{align}
		&\hat H_n\paren{\mathbf X_n} - \hat H\paren{\mathbf X_n^{(i)}} \\
		%&= J(n) -\frac{1}{n}\sum_{j\in \mathbf X_n} m_j(\mathbf X_n)J\paren{m_j(\mathbf X_n)} -\brac{J(n) -\frac{1}{n}\sum_{j\in \mathbf X_n^{(i)}} m_j\paren{\mathbf X_n^{(i)}}J\paren{m_j\paren{\mathbf X_n^{(i)}}}} \\
		&= -\frac{1}{n}\sum_{j\in \mathbf X_n} m_j(\mathbf X_n)J\paren{m_j(\mathbf X_n)} +\frac{1}{n}\sum_{j\in \mathbf X_n^{(i)}} m_j\paren{\mathbf X_n^{(i)}}J\paren{m_j\paren{\mathbf X_n^{(i)}}} \\
		&= -\frac{m_{X^{(i)}}\paren{\mathbf X_n}}{n} J\paren{m_{X^{(i)}}(\mathbf X_n)} -\frac{m_{Y^{(i)}}(\mathbf X_n)}{n} J\paren{m_{Y^{(i)}}(\mathbf X_n)} \\
		&\quad +\frac{m_{X^{(i)}}\paren{\mathbf X_n^{(i)}}}{n} J\paren{m_{X^{(i)}}\paren{\mathbf X_n^{(i)}}} +\frac{m_{Y^{(i)}}\paren{\mathbf X_n^{(i)}}}{n} J\paren{m_{Y^{(i)}}\paren{\mathbf X_n^{(i)}}} \\
		&= -\frac{m_{X^{(i)}}\paren{\mathbf X_n}}{n} J\paren{m_{X^{(i)}}\paren{\mathbf X_n}} -\frac{m_{Y^{(i)}}\paren{\mathbf X_n}}{n} J\paren{m_{Y^{(i)}}\paren{\mathbf X_n}} \\
		&\quad +\frac{m_{X^{(i)}}\paren{\mathbf X_n}-1}{n} J\paren{m_{X^{(i)}}\paren{\mathbf X_n}-1} +\frac{m_{Y^{(i)}}\paren{\mathbf X_n}+1}{n} J\paren{m_{Y^{(i)}}\paren{\mathbf X_n}+1} \\
		&= \frac{J\paren{m_{Y^{(i)}}\paren{\mathbf X_n}}}{n} -\frac{J\paren{m_{X^{(i)}}\paren{\mathbf X_n}-1}}{n} \\
		&= \frac{1}{n} \brac{ J\paren{m_{Y^{(i)}}\paren{\mathbf X_n}} -J\paren{m_{X^{(i)}}\paren{\mathbf X_n^{(i)}}} }
	\end{align}
	using the identity $J(z)=J(z-1)+\frac{1}{z}$ for $z\geq 1$ on the penultimate equality.
	In both cases,
	\begin{equation}
		\hat H_n\paren{\mathbf X_n} - \hat H\paren{\mathbf X_n^{(i)}} 
		= \frac{1}{n} \brac{ J\paren{m_{Y^{(i)}}\paren{\mathbf X_n}} -J\paren{m_{X^{(i)}}\paren{\mathbf X_n^{(i)}}} }~.\
	\end{equation}
	For brevity moving forward, we drop the function notation when the meaning is clear, so that $m_X := m_{X^{(i)}}\paren{\mathbf X_n^{(i)}}$ and $m_Y := m_{Y^{(i)}}\paren{\mathbf X_n}$.

	Because $X, Y$ are both randomly distributed according to $p$, if $X=Y$, then $(m_X, n-m_X)$ has a binomial distribution with parameters $n$ and $(p_X, 1-p_X)$; otherwise when $X\neq Y$, $(m_X, m_Y, n-m_X-m_Y)$ obeys a multinomial distribution with parameters $n$ and $(p_X, p_Y, 1-p_X-p_Y)$.

	Using the Efron-Stein inequality,
	\begin{align}
		\var\brac{\hat H\paren{\mathbf X_n}}
		&\leq \sum_{i=1}^n \E\brac{\hat H_n(X_n) - \hat H_n\paren{X_n^{(i)}}}^2 \\
		&= \sum_{i=1}^n \E\brac{ \frac{J\paren{m_{Y^{(i)}}}}{n} -\frac{J\paren{m_{X^{(i)}}}}{n} }^2 \\
		&= \frac{ \E\brac{ J\paren{m_{Y}} - J\paren{m_{X}} }^2 }{n}~. \label{iddlinexp}
	\end{align}
	The last equality follows from the linearity of expectation and because the sample is identically distributed.
	We finish the proof by showing the numerator of line~\ref{iddlinexp} is finite as $n\rightarrow\infty$.
	Using the law of total expectation
	\begin{align}
		 &\E\brac{ J\paren{m_{Y}} - J\paren{m_{X}}}^2 \\
		 &= \P(X=Y) \E\brac{ \paren{J\paren{m_{Y}} - J\paren{m_{X}}}^2 \middle| X=Y } \\
		 &\quad +\P(X\neq Y) \E\brac{ \paren{J\paren{m_{Y}} - J\paren{m_{X}} }^2 \middle| X\neq Y } \\
		 &= 2\P(X\neq Y) \E\brac{J(m_X)^2 -J(m_X)J(m_Y) \middle| X\neq Y} \label{line:xySim} \\
		 &\leq 6 \sum_{m=1}^n \frac{ J(m) \E\brac{(1-p_X)^m} }{m} \leq M\sum_{m=1}^n \frac{ \log(m) +1 }{m^{2-\alpha} } \label{line:moreInd} ~.
	\end{align}
	The equality from line~\ref{line:xySim} follows because $X$ and $Y$ are identically distributed.
	The following inequality uses Lemma~\ref{lem:missingmass}.
	The first inequality from line~\ref{line:moreInd} follows from a application of induction that we omit for brevity.
	The integral test shows that the final summation is finite as $n \rightarrow \infty$ for $\alpha \in [0,1)$.
\end{proof}

\section{Minimax Proof} \label{apdx:minimax}

\begin{proof}[Proof of Proposition \ref{thm:minimaxlower}]
	Fix $\epsilon \in (0,1)$ and let $p=\paren{\frac{1}{3}, \frac{2}{3}}$ and $q=\paren{\frac{1+\epsilon}{3}, \frac{2-\epsilon}{3}}$.
	We calculate the Kullback-Leibler divergence and bound it using the inequality $\log x \leq x-1$,
	\begin{equation}
		\text{KL}(p, q)
		= \frac{1}{3} \log\paren{ \frac{1}{1 +\epsilon}} +\frac{2}{3} \log\paren{\frac{2}{1-\epsilon}} \leq \epsilon^2~.
	\end{equation}

	Next, we calculate the difference of entropies of $q$ and $p$, then using $\log x \geq (x-1)/x$ to bound the difference below,
	\begin{align}
		H(q) -H(p)
		= \frac{1}{3} \brac{\epsilon \log\paren{\frac{2-\epsilon}{1+\epsilon}} -\log\paren{\frac{(1+\epsilon)(2-\epsilon)^2}{4}} } 
		\geq \frac{\epsilon \log 2 }{3} -\epsilon^2~.
	\end{align}

	With Le Cam's two-point method, and choosing $\epsilon = \frac{1}{\sqrt n}$,
	\begin{align}
		\inf_{f\in \mathcal H} \sup_{p\in\mathcal P} \E_p \brac{ f(\mathbf X_n) -H(X)}^2 
		&\geq \frac{1}{4} \brac{H(p) -H(q)}^2 e^{-n \text{KL}(p, q)} \geq \frac{1}{4} \paren{ \frac{\log 4 }{3\sqrt n} -\frac{1}{n} }^2 e^{-1} \gtrsim \frac{1}{n} ~.
	\end{align}
\end{proof}

\section{Proofs for Asymptotic Efficiency}\label{apdx:clt}

\begin{proof}[Proof of Theorem~\ref{clt}]
	For random variables, $U$ and $V$ define the bounded Lipschitz metric as
	\begin{equation}
		d(U,V) := \sup_{f\in\text{BL}(\R)} \abs{ \E[f(U)] -\E[f(V)] }
	\end{equation}
	where 
	\begin{equation}
		\text{BL}(\R) := \curly{f:\R \rightarrow \R: \sup_{x\in\R} \abs{f(x)}\leq 1, \sup_{x\neq y} \frac{\abs{f(x)-f(y)}}{\abs{x-y}}\leq 1}~.
	\end{equation}
	Define the random variable $Z\sim N(0, \var[\log p(X)])$ and the oracle estimator as $H^*(\mathbf X_n) := \frac{1}{n} \sum_{i=1}^n \log p\paren{X^{(i)}}$.
	Using the triangle inequality,
	\begin{align}
		d\paren{ \sqrt n\brac{\hat H\paren{\mathbf X_n}-H(X)}, Z}
		&\leq d\paren{ \sqrt n\brac{\hat H\paren{\mathbf X_n}-H(X)}, \sqrt n\brac{H^*\paren{\mathbf X_n}-H(X)}} \\
		&\quad +d\paren{ \sqrt n\brac{H^*\paren{\mathbf X_n}-H(X)}, Z}~. \label{convmetric}
	\end{align}
	We proceed by showing that both converge to zero.
	Beginning with the first term, using the Lipschitz property,
	\begin{align}
		&d\paren{ \sqrt n\brac{\hat H\paren{\mathbf X_n}-H(X)}, \sqrt n\brac{H^*\paren{\mathbf X_n}-H(X)}} \\
		%&= \sup_{f\in\text{BL}(\R)} \abs{ \E f\paren{\sqrt n\brac{\hat H\paren{\mathbf X_n}-H(X)}} -\E f\paren{\sqrt n\brac{H^*\paren{\mathbf X_n}-H(X)}}} \\
		&\leq \sqrt n \E \abs{\hat H\paren{\mathbf X_n}-H^*\paren{\mathbf X_n}} 
		\leq \sqrt{ n\E \paren{\hat H\paren{\mathbf X_n}-H^*\paren{\mathbf X_n}}^2} \\
		&= \sqrt n \sqrt{\paren{\E\brac{\hat H\paren{\mathbf X_n}}-H(X)}^2 +\var\brac{\hat H\paren{\mathbf X_n}-H^*\paren{\mathbf X_n}}} \\
		&= \sqrt n \sqrt{ o\paren{\frac{1}{n}} + \mathcal{O}\paren{\frac{\log n}{n^{1.5}} }} \rightarrow 0.
	\end{align}
	The last line follows from Proposition~\ref{bias} and Proposition~\ref{prop:vardiff}, and converges to zero when $p_j =o(j^{-2})$.

	Now we consider the second other distance metric term.
	With a Berry-Esseen-style, Wasserstein central limit theorem~\cite{barbour2005}[Theorem 3.2],
	\begin{equation}
		d\paren{ \sqrt n\brac{H^*\paren{\mathbf X_n}-H(X)}, Z} 
		\leq \frac{3 \E \abs{-\log p(X) - H(X)}^3}{\sqrt n \var\brac{\log p(X)} } \rightarrow 0~.
	\end{equation}
	Using the integral test, it is straightforward to show that $\E\abs{-\log p(X) - H(X)}^3<\infty$ for $p_j =o(j^{-2})$:
	$\sum j^{-2} (\log j)^3$ converges because $\int x^{-2} (\log x)^3\,dx$ does.

	Together, these show that $d\paren{ \sqrt n\brac{\hat H\paren{\mathbf X_n}-H(X)}, Z} \rightarrow 0$ as $n\rightarrow\infty$. 
	Using~\cite[Thm. 8.3.2]{bogachev2007measure}, $\sqrt n\brac{\hat H\paren{\mathbf X_n}-H(X)}\leadsto Z$ .
\end{proof}

\begin{prop}\label{prop:vardiff}
	If $p_j \lesssim j^{-1/\alpha}$ for $\alpha\in [0,1)$, then
	\begin{equation}
		\var\brac{\hat H\paren{\mathbf X_n}-H^*\paren{\mathbf X_n}} \lesssim \frac{\log n}{n^{2-\alpha}} ~.
	\end{equation}
\end{prop}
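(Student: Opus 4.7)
The plan is to introduce $Y_i := \brac{J(n-1) - J\paren{m^{(i)}-1}} + \log p\paren{X^{(i)}}$, so that $\hat H(\mathbf X_n) - H^*(\mathbf X_n) = \frac{1}{n}\sum_{i=1}^n Y_i$. By exchangeability,
\begin{equation}
\var\brac{\hat H(\mathbf X_n) - H^*(\mathbf X_n)} = \frac{1}{n}\var[Y_1] + \frac{n-1}{n}\cov[Y_1, Y_2].
\end{equation}
The stated rate $\log n / n^{2-\alpha}$ will follow from $\var[Y_1] \lesssim \log n / n^{1-\alpha}$ and $\cov[Y_1, Y_2] \lesssim \log n / n^{2-\alpha}$. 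The underlying principle is that $Y_i$ is the difference between the local harmonic estimate and the oracle log-probability, so its variance should be of lower order than either piece individually, with the leading $\var[\log p(X)]$ contribution cancelling.

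For $\var[Y_1]$, write $A_1 := J(n-1) - J(m^{(1)}-1)$ and $B_1 := \log p(X^{(1)})$ and expand $\var[Y_1] = \var[A_1] + 2\cov[A_1, B_1] + \var[B_1]$. From the analysis of Theorem~\ref{var}, $\var[A_1] = \var[\log p(X)] + \mathcal O(\log n / n^{1-\alpha})$, and $\var[B_1] = \var[\log p(X)]$ by definition. Applying the law of total expectation together with Proposition~\ref{prop:mathind} to $\E[A_1 B_1]$ gives
\begin{equation}
\cov[A_1, B_1] = -\var[\log p(X)] - \cov\brac{\log p(X),\, r_{n-1}(X)},
\end{equation}
where $r_K(x) := \sum_{m=K+1}^\infty (1-p(x))^m/m$ is the Taylor remainder in Proposition~\ref{prop:mathind}. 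The three copies of $\var[\log p(X)]$ cancel exactly, leaving a residual governed by $\cov[\log p(X), r_{n-1}(X)]$. This residual is controlled by the estimate $\abs{\E[\log p(X)(1-p(X))^m]} \lesssim \log m / m^{1-\alpha}$, which then summed over $m \geq n$ gives $\mathcal O(\log n / n^{1-\alpha})$.

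For the covariance, decompose $\cov[Y_1, Y_2] = \cov[A_1, A_2] + \cov[A_1, B_2] + \cov[A_2, B_1] + \cov[B_1, B_2]$. The last term vanishes by independence of $X^{(1)}, X^{(2)}$, and $\cov[A_1, A_2] = \mathcal O(\log n / n^{2-\alpha})$ is already the bound of Lemma~\ref{lem:cov} together with the collision correction of Proposition~\ref{prop:addterms}. The crucial step is the cross term $\cov[A_1, B_2]$: conditioning on $(X^{(1)}, X^{(2)}) = (x,y)$ gives $m^{(1)}-1 = I(y=x) + M_x$ with $M_x \sim \text{Binomial}(n-2, p(x))$; the collision case adds $\E[1/(M_y+1)] = [1-(1-p(y))^{n-1}]/[(n-1)p(y)]$. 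Averaging via Proposition~\ref{prop:mathind} with $n-2$ trials and telescoping through $J(n-1) - J(n-2) = 1/(n-1)$ yields the clean identity
\begin{equation}
\cov[A_1, B_2] = \frac{ H(X)\,\E\brac{(1-p(X))^{n-1}} + \E\brac{\log p(X)\,(1-p(X))^{n-1}} }{n-1}.
\end{equation}
Each expectation in the numerator is $\mathcal O(\log n / n^{1-\alpha})$ by Proposition~\ref{lem:missingmass} and its logarithmic refinement, so $\cov[A_1, B_2] = \mathcal O(\log n / n^{2-\alpha})$. Symmetry handles $\cov[A_2, B_1]$, and combining all pieces proves the claim.

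The main obstacle is the refined estimate $\abs{\E[\log p(X)(1-p(X))^m]} \lesssim \log m / m^{1-\alpha}$. This is analogous to Proposition~\ref{lem:missingmass} but with an extra $\abs{\log x}$ factor that must be carried through the Riemann-Stieltjes integration by parts; under the change of variables $u = mx$, using $\abs{\log(u/m)} \leq \abs{\log u} + \log m$ separates out the $\log m$ prefactor while the residual gamma-type integrals remain uniformly bounded in $m$. Beyond this technical lemma, the proof is a mechanical algebraic synthesis of the first- and second-moment identities already established for the harmonic estimator.
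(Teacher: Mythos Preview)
Your proposal is correct and takes essentially the same approach as the paper: both routes compute the same four ingredients ($\var[A_1]$, $\cov[A_1,B_1]$, $\cov[A_1,A_2]$, $\cov[A_1,B_2]$), obtain the same cross-covariance identity $\cov[A_1,B_2] = \bigl(H(X)\,\E[(1-p_X)^{n-1}] + \E[\log p_X\,(1-p_X)^{n-1}]\bigr)/(n-1)$, and rely on the cancellation of the leading $\var[\log p(X)]$ terms. The only cosmetic difference is that the paper groups the decomposition as $\var[\hat H]+\var[H^*]-2\cov[\hat H,H^*]$ and invokes Theorem~\ref{var} for the first piece, whereas you expand $\var[Y_1]$ and $\cov[Y_1,Y_2]$ directly; your explicit logarithmic refinement of Proposition~\ref{lem:missingmass} for $\E[\log p(X)(1-p_X)^m]$ is arguably a cleaner justification than the paper's brief appeal to $\log x\le x-1$.
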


\begin{proof}
	Using the difference of random variables variance property, we can separate terms into variance and covariance:
	\begin{align}
		&\var\brac{\hat H\paren{\mathbf X_n}-H^*\paren{\mathbf X_n}} \\
		&= \var\brac{\hat H\paren{\mathbf X_n}} +\var\brac{H^*\paren{\mathbf X_n}} - 2\cov\brac{\hat H\paren{\mathbf X_n}, H^*\paren{\mathbf X_n}} ~.
		%&= \frac{\var\brac{ \log p(X) }}{n} +o\paren{\frac{1}{n}} +\frac{\var\brac{ \log p(X) }}{n} -\frac{2\var\brac{ \log p(X) }}{n} +o\paren{\frac{1}{n}} = o\paren{\frac{1}{n}}~.
	\end{align}

	From Theorem~\ref{var}, 
	\begin{equation}
		\var\brac{\hat H\paren{\mathbf X_n}} = \frac{\var\brac{ \log p(X) }}{n} +\mathcal O\paren{\frac{\log n}{n^{2-\alpha}} } ~.
	\end{equation}
	Because the sample is independent,
	\begin{equation}
		\var\brac{H^*\paren{\mathbf X_n}}
		= \var\brac{-\frac{1}{n}\sum_{i=1}^n \log p\paren{X^{(i)}}} 
		= \frac{1}{n} \var\brac{\log p(X)}~.
	\end{equation}

Working on the covariance terms, recall that $m^{(i)} :=\sum_{j=1}^n I\paren{X^{(i)}=X^{(j)}}$ and distributed as $\text{Binomial}(n-1, p(X^{(i)}))+1$ given $X^{(i)}$, then
	\begin{align}
		&\cov\brac{\hat H\paren{\mathbf X_n}, H^*\paren{\mathbf X_n}} \\
		&= \cov\brac{ \frac{1}{n}\sum_{i=1}^n \brac{J(n-1) -J\paren{m^{(i)}-1}}, -\frac{1}{n}\sum_{i=1}^n \log p\paren{X^{(i)}}} \\
		&= \frac{1}{n^2} \sum_{i=1}^n \sum_{k=1}^n \cov\brac{J(n-1)- J\paren{m^{(i)}-1}, -\log p\paren{X^{(k)}}} \\
		&= \frac{1}{n} \cov\brac{ J(n-1)-J\paren{m^{(1)}-1}, -\log p\paren{X^{(1)}}} \label{coveq}\\
		&\quad +\frac{n-1}{n} \cov\brac{J(n-1)-J\paren{m^{(1)}-1}, -\log p\paren{X^{(2)}}}~. \label{covneq}
	\end{align}

	Considering the covariance term of line~\ref{coveq}, we begin with the joint expectation.
	Using Proposition~\ref{prop:mathind},
	\begin{align}
		&\E\brac{\paren{J\paren{m^{(1)}-1}-J(n-1)} \log p\paren{X^{(1)}}} \\
		&=\E_{X^{(1)}}\brac{\E_{X^{(2)},\dots,X^{(n)} }\brac{ J\paren{m^{(1)}-1}-J(n-1)\middle| X^{(1)}} \log p\paren{X^{(1)}}} \\
		&=\E_{X^{(1)}}\brac{ \paren{\log p\paren{X^{(1)}} -\sum_{k=n}^\infty \frac{ (1-p_X)^k}{k}} \log p\paren{X^{(1)}}} \\
		&= \E\brac{ \paren{ \log p(X) }^2} -\sum_{k=n}^\infty \frac{ \E\brac{ (1-p_X)^k \log p(X) }}{k}~.
	\end{align}
	With this, and using Proposition~\ref{prop:mathind} again, along with Lemma~\ref{lem:missingmass}, and $\log x \leq x-1$,
	\begin{align}
		&\cov\brac{J\paren{m^{(1)}-1}-J(n-1), \log p\paren{X^{(1)}}} \\
		&= \E\brac{\paren{J\paren{m^{(1)}-1}-J(n-1)} \log p\paren{X^{(1)}}} \\
		&\quad -\E\brac{J\paren{m^{(1)}-1}-J(n-1)} \E\brac{\log p\paren{X^{(1)}}} \\
		&= \E\brac{ \paren{ \log p(X) }^2} -[H(X)]^2 -\sum_{m=n}^\infty \frac{ \E\brac{ (1-p_X)^m \log p(X) } +H(X) \E\brac{(1-p_X)^m} }{m} \\
		&= \var\brac{ \log p(X) } +\mathcal O\paren{\frac{1}{n^{1-\alpha}}}~.
	\end{align}

	Moving to the covariance term of line~\ref{covneq},
	let $m_{-1}^{(2)} := \sum_{i=2}^n I\paren{X^{(2)} =X^{(i)}}$.
	Note that $m_{-1}^{(2)}$ is distributed as $\text{Binomial}(n-2, p(X^{(2)}))+1$.
	If $X^{(1)}=X^{(2)}$, then $m^{(2)} =m_{-1}^{(2)}+1$ and if $X^{(1)}\neq X^{(2)}$, then $m^{(2)} =m_{-1}^{(2)}$.
	Below we use the law of total expectation to separate these cases.
	We apply $J(z) =J(z-1)+\frac{1}{z}$ where $X^{(1)}=X^{(2)}$ to isolate the binomial random variable within the harmonic number function, then reapply the law of total expectation in the other direction, rejoining the cases, with the remainder left out.
	Lastly, we apply Proposition~\ref{prop:mathind}.
	\begin{align}
		&\E\brac{\paren{J\paren{m^{(2)}-1}-J(n-1)}\log p\paren{X^{(1)}}} \\
		&=\P\paren{X^{(1)}=X^{(2)}}\E\brac{\paren{J\paren{m_{-1}^{(2)}} -J(n-1)}\log p\paren{X^{(1)}} \middle| X^{(1)}=X^{(2)}} \\
		&\quad +\P\paren{X^{(1)}\neq X^{(2)}} \E\brac{\paren{J\paren{m_{-1}^{(2)}-1}-J(n-1)}\log p\paren{X^{(1)}} \middle| X^{(1)}\neq X^{(2)} } \\
		&=\P\paren{X^{(1)}=X^{(2)}} \E\brac{\paren{J\paren{m_{-1}^{(2)}-1}-J(n-1)+\frac{1}{m_{-1}^{(2)}}}\log p\paren{X^{(1)}}\middle| X^{(1)}=X^{(2)}} \\
		&\quad +\P\paren{X^{(1)}\neq X^{(2)}} \E\brac{\paren{ J\paren{m_{-1}^{(2)}-1}-J(n-1)}\log p\paren{X^{(1)}}\middle| X^{(1)} \neq X^{(2)}} \\
		&=\E\brac{\paren{J\paren{m_{-1}^{(2)}-1} -J(n-1)} \log p\paren{X^{(1)}}} \\
		&\quad +\P\paren{X^{(1)}=X^{(2)}} \E\brac{\frac{\log p\paren{X^{(1)}}}{m_{-1}^{(2)}} \middle| X^{(1)} = X^{(2)} } \label{line:eqremainder} \\
		&= H(X) \paren{ \sum_{m=1}^{n-2} \frac{ \E\brac{(1-p_X)^m}}{m} +\frac{1}{n-1}} -\frac{ H(X) +\E\brac{ (1-p_X)^{n-1} \log(p_X)} }{n-1} \\
		&= H(X) \sum_{m=1}^{n-2} \frac{ \E\brac{(1-p_X)^m}}{m} -\frac{ \E\brac{ (1-p_X)^{n-1} \log(p_X)} }{n-1}~.
	\end{align}
	We simplify line \ref{line:eqremainder} using the fact that $\E\brac{\frac{1}{U+1}} =\frac{1-(1-p)^{n+1}}{(n+1)p}$ for $U\sim\text{Binomial}(n,p)$, 
	\begin{align}
		&\P\paren{X^{(1)}=X^{(2)}} \E\brac{\frac{\log p\paren{X^{(1)}}}{m_{-1}^{(2)}} \middle| X^{(1)} = X^{(2)} } \\
		&= \sum_i p_i^2 \log(p_i) \sum_{m=0}^{n-2} \frac{1}{m+1} \binom{n-2}{m} p_i^m (1-p_i)^{n-2-m} \\
		&= \sum_i p_i \log(p_i) \frac{ 1-(1-p_i)^{n-1} }{n-1} = -\frac{ H(X) +\E\brac{ (1-p_X)^{n-1} \log(p_X)} }{n-1} ~.
	\end{align}
	Using the joint expectation calculation above all lower order terms cancel.
	\begin{align}
		&\cov\brac{J\paren{m^{(2)}-1}-J(n-1), \log p\paren{X^{(1)}}} \\
		&= \E\brac{\paren{J\paren{m^{(2)}-1}-J(n-1)}\log p\paren{X^{(1)}}} \\
		&\quad -\E\brac{ \paren{J\paren{m^{(2)}-1}-J(n-1)}} \E\brac{\log p\paren{X^{(1)}}} \\
		&= H(X) \sum_{m=1}^{n-2} \frac{ \E\brac{(1-p_X)^m}}{m} -\frac{ \E\brac{ (1-p_X)^{n-1} \log(p_X)} }{n-1} -H(X) \sum_{m=1}^{n-1} \frac{ \E\brac{(1-p_X)^m}}{m} \\
		&= -H(X) \frac{ \E\brac{(1-p_X)^{n-1}}}{n-1} -\frac{ \E\brac{ (1-p_X)^{n-1} \log(p_X)} }{n-1} =\mathcal O\paren{\frac{1}{n^{2-\alpha}}} ~.
	\end{align}
	The bound for the last line uses $\log x \leq x-1$ and Proposition~\ref{lem:missingmass}.

	Collecting all of these calculations,
	\begin{align}
		&\var\brac{\hat H\paren{\mathbf X_n}-H^*\paren{\mathbf X_n}} \\
		&= \var\brac{\hat H\paren{\mathbf X_n}} +\var\brac{H^*\paren{\mathbf X_n}} - 2\cov\brac{\hat H\paren{\mathbf X_n}, H^*\paren{\mathbf X_n}} \\
		&= \frac{\var\brac{ \log p(X) }}{n} +\frac{\var\brac{ \log p(X) }}{n} -\frac{2\var\brac{ \log p(X) }}{n}  +\mathcal O\paren{\frac{\log n}{n^{2-\alpha}} } \\
		&= \mathcal O\paren{\frac{\log n}{n^{2-\alpha}} } ~.
	\end{align}
\end{proof}

\section{Experiments}\label{experiments}

This section illustrates the empirical performance of the proposed estimator compared to different estimators on simulated data.
The simulations include the proposed estimator (labeled \emph{Proposed}), the plug-in (labeled \emph{Plug-In}), the estimator for random variables with finite support from \cite{jiao2015minimax} (labeled \emph{JVHW}), and Bayesian estimator for random variables with countable support from \cite{archer2014bayesian} (labeled \emph{PYM}).
The JVHW estimator has theoretical guarantees only for random variables with finite support, but is included in all simulations for comparison, to observe its empirical performance outside its finite-support setting.
The PYM estimator was developed within a Bayesian paradigm, which assumes that entropy is itself a random variable; the simulations presented here follow a frequentist model assuming that entropy is fixed.

Simulating various settings, datasets are repeatedly generated from four discrete distributions with varying sample sizes.
Sample sizes range from 200 data points to 2000 in increments of 200.
The distributions presented here represent (1) a small, finite-support distribution, (2) a large, finite-support distribution, (3) an unbounded distribution with exponential decay, and (4) an unbounded distribution with power-law decay.
For each distribution and sample size, the simulation generates 100 datasets then applies all four estimation methods to each dataset.

The estimation results are displayed as empirical mean squared error (MSE) and in a series of \emph{violin plots}\footnote{Mirrored kernel density estimation plots} representing the distribution of the entropy estimation methods by sample size.
Violin plots provide additional insight on bias and variance not available in the empirical MSE plots.
Thicker regions of the violin plot correspond to a greater density of entropy estimates of the 100 total for that sample size.
The ``--'' markers above and below each violin indicate the maximum and minimum estimates for each violin.
The ``$\times$'' marker within each violin indicates the mean value of the 100 entropy estimates; mean values of each estimator are connected to better visualize change as the sample size increases.
The horizontal line indicates the true entropy.

\begin{figure}[ht]
	\centering
	\includegraphics[width=0.45\textwidth, page=2]{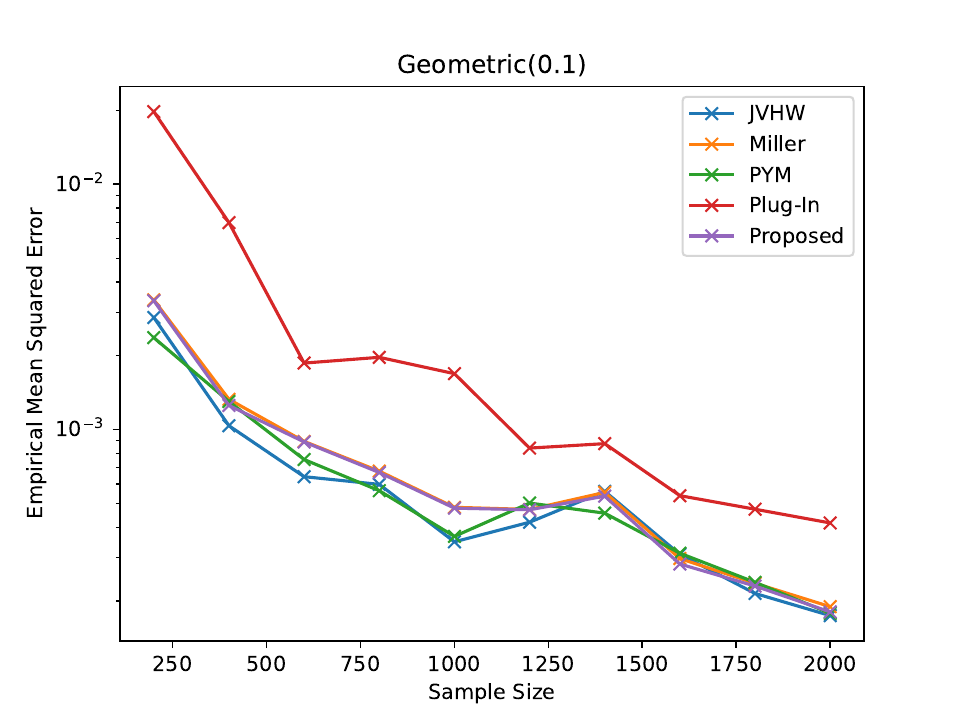}
	\includegraphics[width=0.45\textwidth, page=2]{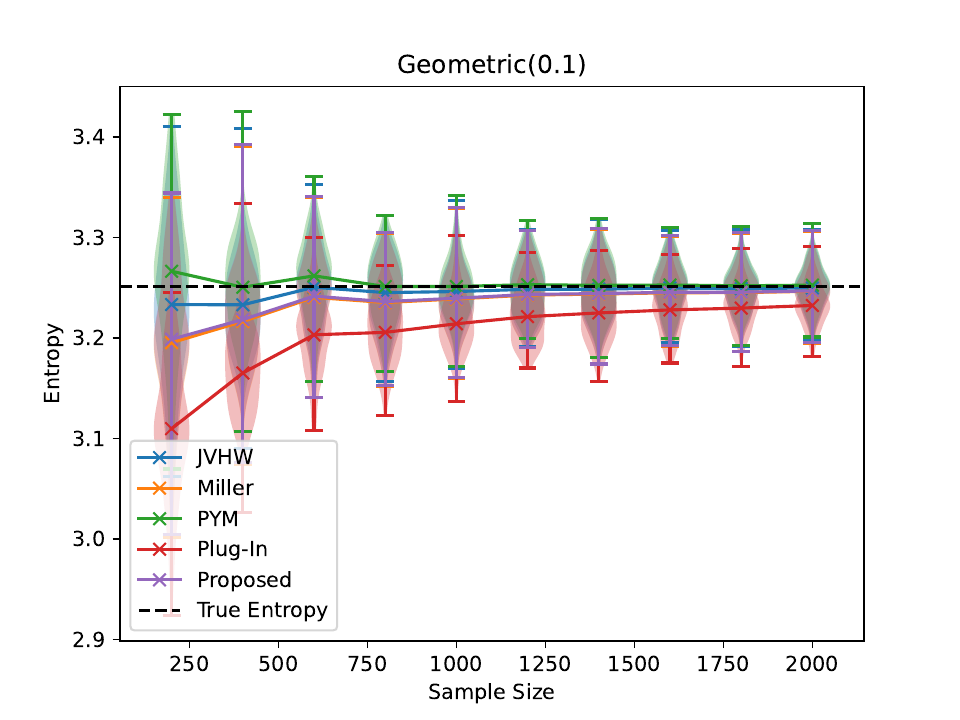}
	\caption{The figure above shows plots for a Multinomial distribution with probability vector $(0.5, 0.2, 0.15, 0.1, 0.05)$.
	On the left are average mean squared errors (MSE) by sample size and on the right are corresponding violin plots.}\label{multi}
\end{figure}

Figure~\ref{multi} shows empirical MSE and violin plots for entropy estimates where the underlying distribution has five support points with $p_1=0.5, p_2=0.2, p_3=0.15, p_4= 0.1, p_5=0.05$ and entropy, $H(X) \approx 1.333074$.
Using Theorem~\ref{thm:risk}, the mean squared error convergence rate for the proposed estimator is $\mathcal O(1/n)$.
Because the support is small in comparison to each sample size, each estimation method is fairly accurate on all sample sizes.
While there are some differences in mean entropy between methods for smaller samples, the large amount of overlap between the different violins for each sample size indicates that all methods have similar performance for this distribution.

\begin{figure}[ht]
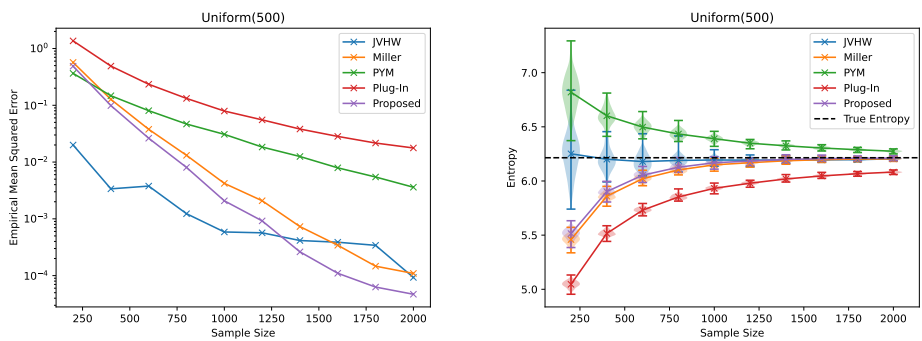

	\centering
	\includegraphics[width=0.45\textwidth, page=3]{mse}
	\includegraphics[width=0.45\textwidth, page=3]{entropySimViolin}
	\caption{The figure above shows plots for the uniform distribution with $S=500$.
	On the left are average mean squared errors (MSE) by sample size and on the right are corresponding violin plots.}\label{uniform}
\end{figure}

Figure~\ref{uniform} shows simulation plots for uniformly distributed data over a support of 500 points, with a true entropy of $H(X) \approx 6.214608$.
Again, the MSE convergence for the proposed estimator is $\mathcal O(1/n)$.
This distribution illustrates the challenge of estimating entropy when sample size is smaller than support size and the accuracy of the JVHW estimator with smaller support.
While the proposed estimator underestimates entropy for smaller samples, it quickly converges to the true entropy and eventually overtakes JVHW in empirical MSE.

\begin{figure}[ht]
	\centering
	\includegraphics[width=0.45\textwidth, page=1]{mse}
	\includegraphics[width=0.45\textwidth, page=1]{entropySimViolin}
	\caption{The figure above shows plots for the geometric distribution with parameter $p=0.1$.
	On the left are average mean squared errors (MSE) by sample size and on the right are corresponding violin plots.}\label{geom}
\end{figure}

In figure~\ref{geom}, the underlying data come from the geometric distribution with parameter $p=0.1$.
The geometric distribution is exponentially decreasing.
Because of this, $\alpha=0$ for any geometric distribution, so the mean squared error convergence rate for the proposed estimator is $\mathcal O(1/n)$.
For a geometric distribution with parameter $p$, entropy is
\begin{equation}
	H(X)= \frac{-(1-p) \log (1-p) -p\log p}{p}~.
\end{equation}
With parameter $p=0.1$, $H(X) \approx 3.250830$.
In this setting, all estimators, other than the plug-in, perform similarly, with a relatively small bias for all sample sizes with shrinking variance as sample size increases.

\begin{figure}[ht]
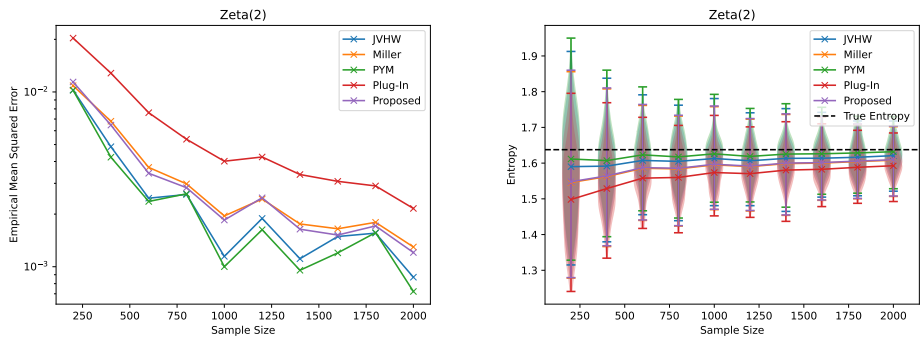

	\centering
	\includegraphics[width=0.45\textwidth, page=4]{mse}
	\includegraphics[width=0.45\textwidth, page=4]{entropySimViolin}
	\caption{The figure above shows plots for the zeta distribution with parameter $\gamma=2$.
	On the left are average mean squared errors (MSE) by sample size and on the right are corresponding violin plots.}\label{zeta}
\end{figure}

Figure~\ref{zeta} shows the case where the underlying data are generated from a zeta distribution  $p_j = \frac{j^{-\gamma}}{\zeta(\gamma)}$ where $\zeta(\gamma) = \sum_{j=1}^\infty j^{-\gamma}$ is the Riemann zeta function as a normalizing constant.
The data are generated with parameter $\gamma=2$, so that $p_j\asymp j^{-2}$ ($\alpha=1/2$).
In this case, the tail of the distribution converges too slowly for the conditions for asymptotic normality (Theorem~\ref{clt}) to be met for the proposed estimator.
Entropy for a zeta-distributed random variable with parameter $\gamma=2$ is
\begin{equation}
	H(X) =\sum_{j=1}^\infty \frac{j^{-2} \log\brac{j^2 \zeta(2)}}{\zeta(2)} \approx 1.637622~.
\end{equation}
For this distribution, all methods underestimate entropy on average with PYM consistently closest in mean to the true value of entropy and the plug-in farthest away.
For this distribution, $p_j \asymp j^{-2}$, so that $\alpha=1/2$.
By Proposition \ref{bias}, the estimator's bias is bounded by $\mathcal{O}(n^{-1/2})$, matching the bound of the standard deviation in this setting, and the largest of the examples chosen here.
This theoretical bias is clearly reflected in the violin plots (Figure \ref{zeta}), where the proposed estimator's mean is visibly centered below the true entropy line, validating our theoretical risk decomposition.

\section*{Acknowledgements}
The author would like to thank Liza Levina and Ji Zhu for their support with this project and Larry Wasserman for reviewing an earlier draft of this paper.
This work is dedicated to the memory of Grant Mesner.

\section*{Funding}
The author was supported by NSF RTG Grant DMS-1646108, NIH NIDA Grant U01 DA03926, and NIH NIAID 2K24AI134413.

\bibliographystyle{IEEEtran}
\bibliography{graph}
\end{document}